\newtheorem{theorem}{Theorem}[section]
\newtheorem{lemma}[theorem]{Lemma}
\newtheorem{proposition}[theorem]{Proposition}
\newtheorem{corollary}[theorem]{Corollary}
\theoremstyle{definition}
\theoremstyle{remark}
\newtheorem{remark}[theorem]{Remark}
\numberwithin{equation}{section}
\renewcommand{\b}{\beta}
\begin{document}

\title{Digit frequencies and self-affine sets with non-empty interior}

\author{Simon Baker}
\address{Mathematics institute, University of Warwick, Coventry, CV4 7AL, UK}
\email{simonbaker412@gmail.com}

\date{\today}

\subjclass[2010]{Primary 11A63; Secondary 28A80, 11K55}

\begin{abstract}
In this paper we study digit frequencies in the setting of expansions in non-integer bases, and self-affine sets with non-empty interior.

Within expansions in non-integer bases we show that if $\beta\in(1,1.787\ldots)$ then every $x\in(0,\frac{1}{\beta-1})$ has a simply normal $\beta$-expansion. We also prove that if $\beta\in(1,\frac{1+\sqrt{5}}{2})$ then every $x\in(0,\frac{1}{\beta-1})$ has a $\beta$-expansion for which the digit frequency does not exist, and a $\beta$-expansion with limiting frequency of zeros $p$, where $p$ is any real number sufficiently close to $1/2$.

For a class of planar self-affine sets we show that if the horizontal contraction lies in a certain parameter space and the vertical contractions are sufficiently close to $1,$ then every nontrivial vertical fibre contains an interval. Our approach lends itself to explicit calculation and give rise to new examples of self-affine sets with non-empty interior. One particular strength of our approach is that it allows for different rates of contraction in the vertical direction.
\end{abstract}

\keywords{Expansions in non-integer bases, Digit frequencies, Self-affine sets. }
\maketitle

\section{Introduction}\label{sec:1}
Let $x\in[0,1]$. A sequence $(\epsilon_i)\in\{0,1\}^{\mathbb{N}}$ is called a \emph{binary expansion} of $x$ if $$x=\sum_{i=1}^{\infty}\frac{\epsilon_i}{2^{i}}.$$ It is well known that apart from the dyadic rationals (numbers of the form $p/2^n$) every $x\in[0,1]$ has a unique binary expansion. The exceptional dyadic rationals have precisely two binary expansions. A seemingly innocuous generalisation of these representations is to replace the base $2$ with a parameter $\beta\in(1,2)$. That is, given $x\in \mathbb{R},$ we call a sequence $(\epsilon_i)\in\{0,1\}^{\mathbb{N}}$ a \emph{$\b$-expansion} of $x$ if $$x=\pi_{\beta}((\epsilon_i)):=\sum_{i=1}^{\infty}\frac{\epsilon_{i}}{\beta^{i}}.$$ These representations were first introduced in the papers of Parry \cite{Parry} and R\'{e}nyi \cite{Renyi}. It is straightforward to show that $x$ has a $\beta$-expansion if and only if $x\in [0,\frac{1}{\beta-1}]$. In what follows we will let $I_{\beta}:=[0,\frac{1}{\beta-1}]$.

Despite being a simple generalisation of binary expansions, $\b$-expansions exhibit far more exotic behaviour. In particular, one feature of $\beta$-expansions that makes them an interesting object to study, is that an $x\in I_{\beta}$ may have many $\beta$-expansions. In fact a result of Sidorov \cite{Sid2} states that for any $\beta\in(1,2),$ Lebesgue almost every $x\in I_{\beta}$ has a continuum of $\beta$-expansions. Moreover, for any $k\in \mathbb{N}\cup\{\aleph_{0}\},$ there exists $\beta\in(1,2)$ and $x\in I_{\beta}$ such that $x$ has precisely $k$ $\beta$-expansions, see \cite{Bak2,BakerSid,EHJ,EJ,Sid1}. Note that the endpoints of $I_{\beta}$ always have a unique $\beta$-expansion for any $\beta\in(1,2)$.

A particularly useful technique for studying both binary expansions and $\beta$-expansions is to associate a dynamical system to the base. One can then often reinterpret a problem in terms of a property of the dynamical system. The underlying geometry of the dynamical system can then make a problem much more tractable. In this paper we prove results relating to digit frequencies and self-affine sets. These results are of independent interest but also demonstrate the strength of the dynamical approach to $\beta$-expansions.

\section{Statement of results}

\subsection{Digit frequencies}
Let $(\epsilon_{i})\in\{0,1\}^{\mathbb{N}}.$ We define the \emph{frequency of zeros of $(\epsilon_i)$} to be the limit $$\textrm{freq}_{0}(\epsilon_i):=\lim_{n\to\infty}\frac{\#\{1\leq i \leq n: \epsilon_i=0\}}{n}.$$ Assuming the limit exists. We call a sequence $(\epsilon_i)$ \emph{simply normal} if $\textrm{freq}_{0}(\epsilon_{i})=1/2.$ For each $x\in[0,1],$ we let $\textrm{freq}_{0}(x)$ denote the frequency of zeros in its binary expansion whenever the limit exists. When the limit does not exist we say $\textrm{freq}_{0}(x)$ does not exist. The following results are well known:
\begin{enumerate}
  \item Lebesgue almost every $x\in[0,1]$ has a simply normal binary expansion.
  \item $ \dim_{H}(\{x: \textrm{freq}_{0}(x)\textrm{ does not exist}\})=1.$
  \item For each $p\in [0,1]$ we have
  $$\dim_{H}(\{x: \textrm{freq}_{0}(x)=p\})=\frac{-p\log p -(1-p)\log(1-p)}{\log 2}.$$
\end{enumerate}
In $(3)$ we have adopted the convention $0\log 0=0$. The first statement is a consequence of Borel's normal number theorem \cite{Bor}, the second statement appears to be folklore, and the third statement is a result of Eggleston \cite{Egg}.

The above results provide part of the motivation for the present work. In particular, we are interested in whether analogues of these results hold for expansions in non-integer bases. Our first result in the setting of $\beta$-expansions is the following.

\begin{theorem}
\label{frequency theorem}
\begin{enumerate}
  \item Let $\beta\in(1,\beta_{KL})$. Then every $x\in(0,\frac{1}{\beta-1})$ has a simply normal $\beta$-expansion.
  \item Let $\beta\in(1,\frac{1+\sqrt{5}}{2})$. Then every $x\in(0,\frac{1}{\beta-1})$ has a $\beta$-expansion for which the frequency of zeros does not exist.
  \item Let $\beta\in(1,\frac{1+\sqrt{5}}{2})$. Then there exists $c=c(\beta)>0$ such that for every $x\in (0,\frac{1}{\beta-1})$ and $p\in[1/2-c,1/2+c],$ there exists a $\b$-expansion of $x$ with frequency of zeros equal to $p$.
\end{enumerate}
\end{theorem}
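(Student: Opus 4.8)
The plan is to work throughout with the two branch maps $T_0(x)=\beta x$ and $T_1(x)=\beta x-1$, which generate $\beta$-expansions: a sequence $(\epsilon_i)$ is a $\beta$-expansion of $x$ exactly when the orbit $x_0=x$, $x_n=T_{\epsilon_n}(x_{n-1})$ stays in $I_\beta$ for all $n$. The essential observation is that a free choice of the next digit is available precisely when $x_{n-1}$ lies in the \emph{switch region} $S:=[\tfrac1\beta,\tfrac{1}{\beta(\beta-1)}]$, which is non-empty for every $\beta\in(1,2)$; elsewhere the digit is forced. I will also use the reflection $\Phi(x)=\tfrac{1}{\beta-1}-x$, which conjugates $T_0\leftrightarrow T_1$ and so sends any $\beta$-expansion $(\epsilon_i)$ of $x$ to the digit-flipped expansion $(1-\epsilon_i)$ of $\Phi(x)$. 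This symmetry is what pins down the value $1/2$ and makes it the centre of every achievable frequency interval.

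The construction engine is a pair of mirror-symmetric return blocks based at $S$. Starting from $x$ in the interior of $S$, choosing the digit $1$ drops the orbit towards $0$, after which the forced digits form a run of $0$'s while $T_0$ carries the orbit back up; this yields a block $B_-=1\,0^{k}$ whose frequency of zeros exceeds $1/2$. Choosing $0$ instead sends the orbit towards $\tfrac1{\beta-1}$ and, by $\Phi$-symmetry, gives the mirror block $B_+=0\,1^{m}$ with frequency of zeros below $1/2$, the two zero-frequencies $f_+<\tfrac12<f_-$ being symmetric about $1/2$. The decisive quantitative point, which I would verify directly, is that for $\beta<\tfrac{1+\sqrt5}{2}$ the ratio of the endpoints of $S$ equals $\tfrac{1}{\beta-1}\ge\beta$, so the geometric orbit climbing back under $T_0$ (ratio $\beta$) cannot skip over $S$ and must return to it in finitely many steps; this is exactly where the golden ratio enters, and it lets me base the blocks on a fixed compact subinterval of the interior of $S$ so that $k,m$ stay bounded. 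Concatenating copies of $B_-$ and $B_+$ under a feedback rule — at each visit to $S$ append $B_-$ if the running frequency of zeros is below the target and $B_+$ if it is above — then steers the limiting frequency to any value strictly inside $[f_+,f_-]$, since the bounded block lengths force each correction to nudge the running frequency in the right direction by an amount tending to $0$. Part (3) follows with any $c=c(\beta)<f_--\tfrac12$; driving the running frequency alternately up near $f_-$ and down near $f_+$ over stretches that each dwarf all their predecessors produces a sequence with $\liminf<\limsup$, proving part (2). For both parts one first uses that $\beta<\tfrac{1+\sqrt5}{2}$ forces every interior point to be non-univoque, so some expansion of any $x\in(0,\tfrac1{\beta-1})$ reaches the interior of $S$; only the finite reaching-prefix is discarded, and the limiting frequency is governed entirely by the blocks.

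For part (1) the target is the single value $1/2$, but the admissible range is enlarged to $\beta<\beta_{KL}$, and the argument must split according to whether $x$ is univoque. A non-univoque interior $x$ can still branch into $S$, so the feedback rule with target $\tfrac12$ (strictly inside $[f_+,f_-]$ since $f_-> \tfrac12>f_+$) again produces a simply normal expansion; I expect the only extra work for $\beta\in[\tfrac{1+\sqrt5}{2},\beta_{KL})$ to be replacing the guaranteed one-step return to $S$ (which may now overshoot) by a bounded return of the first-return map, the symmetry and the feedback scheme being unaffected. The genuinely new difficulty is the interior \emph{univoque} points, which exist once $\beta>\tfrac{1+\sqrt5}{2}$: their orbits never enter the interior of $S$, no choices are ever available, and one must show that the unique expansion is itself simply normal.

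The hard part will be this last step, and it is the point at which $\beta_{KL}$ becomes unavoidable. The plan is to invoke the lexicographic characterisation of univoque sequences for $\beta$: the tail following each $0$ must be strictly dominated by the quasi-greedy expansion of $1$, and, symmetrically under $\Phi$, the reflected tail following each $1$ must be as well. For $\beta<\beta_{KL}$ the quasi-greedy expansion of $1$ lies lexicographically below the Thue–Morse sequence that defines $\beta_{KL}$ and has frequency of zeros $1/2$, and this two-sided constraint should force every univoque sequence to have $\mathrm{freq}_0=1/2$; turning the resulting run-length and reflection bounds into an exact statement about asymptotic frequency is the crux of part (1). I would close by checking the remaining routine matters: that the feedback rule converges, that discarding a finite reaching-prefix leaves the limiting frequency unchanged, and that the extended multi-step return in the range $[\tfrac{1+\sqrt5}{2},\beta_{KL})$ still yields bounded blocks so the non-univoque case of part (1) goes through verbatim.
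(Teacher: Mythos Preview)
Your overall architecture---return blocks based at the switch region together with a feedback rule---is exactly the engine the paper uses for parts (2), (3), and for part (1) on $(1,\tfrac{1+\sqrt5}{2})$. The gap is in the description of the blocks themselves. You assert that from $x$ in the base interval, applying $T_1$ followed by forced $T_0$'s yields $B_-=10^k$ with $k\ge 2$, hence zero--frequency strictly above $1/2$. This fails near the right end of the base interval: for $\beta<\tfrac{1+\sqrt5}{2}$ one has $T_1(\mathcal S_\beta)\cap \mathcal S_\beta\neq\emptyset$ (indeed $T_1(\tfrac{1}{\beta(\beta-1)})=\tfrac{2-\beta}{\beta-1}>\tfrac1\beta$), so the ``forced run of $0$'s'' need not exist at all; and even if you base on the paper's smaller interval $\mathcal O_\beta=[\tfrac{1}{\beta^2-1},\tfrac{\beta}{\beta^2-1}]$, the point $\tfrac{\beta}{\beta^2-1}$ is the \emph{fixed point} of $T_0\circ T_1$, so for $x$ close to it you get $k=1$ and the block $10$ carries no bias. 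No single compact subinterval supports both $B_-=10^k$ and the mirror $B_+=01^m$ with $k,m\ge 2$, as the symmetry constraint $a+b=\tfrac{1}{\beta-1}$ is incompatible with the condition $T_1(K)\subset[0,\tfrac1\beta)$ for most $\beta$. The paper's Proposition~\ref{important prop} repairs this by allowing blocks of the form $(T_1T_0)^{j-1}T_1T_0^{\,i}$: one iterates $T_0T_1$ (which expands away from its fixed point $\tfrac{\beta}{\beta^2-1}$) until the orbit drops far enough that the final $T_0$--run has length $i\ge 2$; a second case handles $T_1(x)\in\mathcal I_\beta$ via $T_1^2$ followed by $T_0$'s. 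Without this refinement your feedback rule can stall at frequency exactly $1/2$, which is fatal for parts (2) and (3).

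For part (1) on $[\tfrac{1+\sqrt5}{2},\beta_{KL})$ your plan departs from the paper. The paper does \emph{not} split into univoque and non-univoque; it constructs the simply normal expansion directly using the Thue--Morse renormalisation: nested intervals $\mathcal I_i=[\pi_\beta((\tau^i)^\infty),\pi_\beta((\overline{\tau^i})^\infty)]$ with $\mathcal I_{n+1}\subset\mathcal S_\beta\subset\mathcal I_n\subset\cdots\subset\mathcal I_1=\mathcal O_\beta$, and balanced blocks $\kappa^i$ (satisfying $|\kappa^i|_0=|\kappa^i|_1$) that push the orbit inward until it reaches $\mathcal I_{n+1}$, where a controlled choice is available. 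Your alternative---invoke the Jordan--Shmerkin--Solomyak result that every interior univoque sequence is simply normal for $\beta\le\beta_T$, and run feedback on the non-univoque points---is a legitimate route, and the univoque half is correct (the paper cites exactly this in its final remarks). But the non-univoque half is underspecified: once $\beta\ge\tfrac{1+\sqrt5}{2}$ the first-return map to $\mathcal S_\beta$ is not everywhere defined (some choices lead into univoque orbits that never return), the return blocks are no longer uniformly bounded, and there is no reason their zero--frequencies straddle $1/2$. You would need to argue that either the orbit returns to $\mathcal S_\beta$ infinitely often with blocks whose frequencies are controlled, or it eventually becomes univoque and then JSS finishes; making the first alternative precise is essentially what the paper's Thue--Morse construction accomplishes.
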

The quantity $\beta_{KL}\approx 1.787$ appearing in statement $1$ of Theorem \ref{frequency theorem} is the Komornik-Loreti constant introduced in \cite{KomLor}. In \cite{KomLor} Komornik and Loreti proved that $\beta_{KL}$ is the smallest base for which $1$ has a unique $\beta$-expansion. It has since been shown to be important for many other reasons. We elaborate on the significance of this constant and its relationship with the Thue-Morse sequence in Section \ref{Section3}. Note that we can explicitly calculate a lower bound for the quantity $c$ appearing in statement $3$ of Theorem \ref{frequency theorem}. We include some explicit calculations in Section \ref{Section Explicit calculations}.

It follows from the results listed above that the set of $x$ whose binary expansion is not simply normal has Hausdorff dimension $1$. Our next result shows that as $\beta$ approaches $2$ we see a similar phenomenon.
\begin{theorem}
\label{exceptional theorem}
$$\lim_{\beta\nearrow 2}\dim_{H}\Big(\Big\{x: x \textrm{ has no simply normal } \beta\textrm{-expansion}\Big\}\Big)=1.$$
\end{theorem}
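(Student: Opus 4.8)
The plan is to bound the dimension from below by producing, for each $\beta$ close to $2$, a large family of points whose \emph{unique} $\beta$-expansion fails to be simply normal. Since a univoque point has only one expansion, such a point has no simply normal $\beta$-expansion at all, so it lies in the exceptional set. The upper bound $\dim_{H}(\cdots)\le 1$ is automatic because the set lies in $\mathbb{R}$, so the whole task reduces to showing $\liminf_{\beta\nearrow 2}\dim_{H}(\cdots)\ge 1$.

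First I would exploit the lexicographic description of the univoque set. Let $(\alpha_i)$ be the quasi-greedy expansion of $1$; as $\beta\nearrow 2$ one has $(\alpha_i)\to 1^{\infty}$, so for any $M$ there is $\beta_M<2$ with $\alpha_1=\cdots=\alpha_M=1$ for all $\beta\in(\beta_M,2)$. Using the standard criterion (if $\epsilon_n=0$ then $(\epsilon_{n+i})_{i\ge 1}\prec(\alpha_i)$, together with the reflected condition when $\epsilon_n=1$), any sequence containing neither $0^{M}$ nor $1^{M}$ is then univoque, and moreover univoque with a uniform gap. Let $X_M\subseteq\{0,1\}^{\mathbb{N}}$ be the subshift of finite type consisting of sequences that avoid both $0^{M}$ and $1^{M}$. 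Then $X_M\subseteq\mathcal{U}_{\beta}$ for every $\beta\in(\beta_M,2)$, and since we forbid only arbitrarily long runs, its topological entropy satisfies $h(X_M)\to\log 2$ as $M\to\infty$.

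The key point is then to break the $0\leftrightarrow 1$ symmetry without losing entropy. Fix $\eta>0$, choose $M$ large enough that $h(X_M)>\log 2-\eta/2$, and note that the measure of maximal entropy on the symmetric subshift $X_M$ necessarily has frequency of zeros $1/2$. A small perturbation of it yields an ergodic Markov measure $\mu$ on $X_M$ with $h(\mu)>\log 2-\eta$ and, $\mu$-almost surely, frequency of zeros $p\neq 1/2$; by Birkhoff's theorem the latter holds for $\mu$-a.e.\ sequence. Every such sequence lies in $\mathcal{U}_{\beta}$ and projects to a point whose unique expansion has frequency $p\neq 1/2$, hence is not simply normal. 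To convert this into a dimension bound I would push $\mu$ forward to $\nu=(\pi_{\beta})_{*}\mu$ and estimate its lower local dimension: the uniform gap means that if $\pi_{\beta}(\epsilon')$ lies within $\beta^{-n}$ of $\pi_{\beta}(\epsilon)$ then $\epsilon'$ agrees with $\epsilon$ on the first $n-O(1)$ coordinates, so $\nu\bigl(B(\pi_{\beta}(\epsilon),\beta^{-n})\bigr)\le C\,\mu([\epsilon_1\cdots\epsilon_{n-O(1)}])$. Shannon--McMillan--Breiman then gives lower local dimension $h(\mu)/\log\beta$ at $\mu$-a.e.\ point, and the mass distribution principle yields $\dim_{H}\pi_{\beta}\bigl(\{\epsilon\in X_M:\mathrm{freq}_0(\epsilon)=p\}\bigr)\ge h(\mu)/\log\beta$.

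Combining the estimates, for $\beta\in(\beta_M,2)$ the exceptional set has dimension at least $h(\mu)/\log\beta>(\log 2-\eta)/\log\beta$, whence $\liminf_{\beta\nearrow 2}\dim_{H}(\cdots)\ge(\log 2-\eta)/\log 2=1-\eta/\log 2$; letting $\eta\to 0$ gives the claim together with the trivial upper bound. I expect the main obstacle to be the local-dimension step of the previous paragraph, that is, justifying the lower bound $\dim_{H}\pi_{\beta}(X)\ge h(X)/\log\beta$ for subshifts $X$ sitting inside the univoque set; this is exactly where the uniform-gap (uniform expansion) property of $X_M$ must be used to control overlaps of the projection, and it is the only nontrivial analytic ingredient, the remaining arguments being the symbolic characterisation of $\mathcal{U}_{\beta}$, the continuity of entropy, and standard ergodic theory.
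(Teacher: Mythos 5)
Your proposal is correct, and it shares the paper's overall skeleton: both proofs exhibit, for $\beta$ near $2$, a large-dimensional set of \emph{univoque} points whose unique expansion fails to be simply normal, using the lexicographic characterisation of unique expansions and the fact that sequences avoiding long runs of $0$'s and $1$'s are univoque once $\alpha(\beta)$ starts with a long block of $1$'s (the paper phrases this via the multinacci numbers $\beta_n$, for which $\alpha(\beta_n)=((1)^n0)^{\infty}$, together with the monotonicity lemma $\widetilde{\mathcal{A}}_{\beta}\subseteq\widetilde{\mathcal{A}}_{\beta'}$ for $\beta<\beta'$). Where you genuinely diverge is the dimension estimate. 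The paper avoids ergodic theory entirely: it takes the full shift $T_{2k+1}=W_{2k+1}^{\mathbb{N}}$ over the set $W_{2k+1}$ of words of odd length $2k+1$ with strictly more $1$'s than $0$'s (excluding $(1)^{2k+1}$); every concatenation of such blocks has upper frequency of zeros at most $k/(2k+1)<1/2$, and univoqueness makes the union in the self-similarity relation for $\pi_{\beta}(T_{2k+1})$ disjoint, so the strong separation condition holds and the dimension $\frac{\log(2^{2k}-1)}{(2k+1)\log\beta}\to 1$ is read off from the standard similarity dimension formula. You instead work on the run-length-limited SFT $X_M$, perturb the measure of maximal entropy to an ergodic Markov measure with entropy near $\log 2$ and asymmetric digit frequency, and transfer entropy to dimension via Shannon--McMillan--Breiman and the mass distribution principle. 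The step you flag as the main obstacle, namely $\dim_{H}\pi_{\beta}(X)\ge h(\mu)/\log\beta$, is indeed the crux, but it is sound: univoqueness makes $\pi_{\beta}$ injective on $X_M$, and compactness of $\{(\omega,\omega')\in X_M\times X_M:\omega_1\neq\omega_1'\}$ together with shift-invariance of $X_M$ yields a uniform constant $c>0$ with $|\pi_{\beta}(\omega)-\pi_{\beta}(\omega')|\ge c\beta^{-n}$ whenever the two sequences first differ at index $n+1$, which is exactly the overlap control your local-dimension estimate requires. The trade-off: the paper's route is more elementary and gives the exact dimension of its set, while yours needs heavier machinery but is more flexible --- the same argument produces univoque points with any prescribed digit frequency $p\neq 1/2$ near $1/2$, and admits Gibbs-measure and multifractal refinements that the block-counting construction does not immediately provide.
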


\subsection{Hybrid expansions}
In this section we consider $\beta$-expansions where our digit set is $\{-1,1\}$ instead of $\{0,1\}$. Given $\beta\in(1,2)$ and $x\in [\frac{-1}{\beta-1},\frac{1}{\beta-1}],$ we say that a sequence $(\epsilon_i)\in\{-1,1\}^{\mathbb{N}}$ is a \emph{hybrid expansion of $x$ }if the following holds: $$x=\sum_{i=1}^{\infty}\frac{\epsilon_i}{\b^i}$$ and $$x=\lim_{n\to\infty} \frac{1}{n}\sum_{i=1}^{n}\epsilon_i.$$ Hybrid expansions were first introduced by G\"{u}nt\"{u}rk in \cite{Gun}. Interestingly, the original motivation for studying hybrid expansions was to overcome the problem of analogue to digital conversion where the underling system has background noise. In \cite{Gun} the following result was asserted without proof.

\begin{theorem}
\label{DKK theorem}
There exists $C_1>0,$ such that for all $\beta\in(1,1+C_1)$ there exists $c=c(\beta)>0,$ such that every $x\in [-c,c]$ has a hybrid expansion.
\end{theorem}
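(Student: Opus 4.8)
The plan is to reduce Theorem \ref{DKK theorem} to statement 3 of Theorem \ref{frequency theorem} via an affine change of the digit set. Given a sequence $(\epsilon_i)\in\{-1,1\}^{\mathbb{N}}$, write $\epsilon_i=2\delta_i-1$ with $\delta_i\in\{0,1\}$. Summing the geometric series $\sum_{i=1}^{\infty}\beta^{-i}=\frac{1}{\beta-1}$, one finds
$$\sum_{i=1}^{\infty}\frac{\epsilon_i}{\beta^{i}}=2\pi_{\beta}((\delta_i))-\frac{1}{\beta-1},$$
so the representation condition $x=\sum_i\epsilon_i\beta^{-i}$ is equivalent to $(\delta_i)$ being an ordinary $\beta$-expansion (digit set $\{0,1\}$) of
$$y:=\frac{x}{2}+\frac{1}{2(\beta-1)}.$$
Likewise $\frac{1}{n}\sum_{i=1}^{n}\epsilon_i=\frac{2}{n}\sum_{i=1}^{n}\delta_i-1$, so whenever the frequency of zeros of $(\delta_i)$ exists, the Ces\`{a}ro average of $(\epsilon_i)$ converges to $1-2\,\textrm{freq}_0(\delta_i)$. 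Hence the second hybrid condition $\lim_{n}\frac{1}{n}\sum_i\epsilon_i=x$ is equivalent to $\textrm{freq}_0(\delta_i)=\frac{1-x}{2}$.

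This reduces the theorem to the following: for $\beta$ in a suitable range and $x$ near $0$, find a $\beta$-expansion of $y$ with frequency of zeros exactly $p:=\frac{1-x}{2}$. I would set $C_1=\frac{\sqrt{5}-1}{2}$, so that $\beta\in(1,1+C_1)$ means precisely $\beta\in(1,\frac{1+\sqrt{5}}{2})$, and then invoke Theorem \ref{frequency theorem}(3), which supplies a constant $c'=c'(\beta)>0$ guaranteeing such an expansion whenever $y\in(0,\frac{1}{\beta-1})$ and $p\in[\frac{1}{2}-c',\frac{1}{2}+c']$.

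It then remains to choose $c=c(\beta)$ so that $x\in[-c,c]$ forces both constraints. Since $p=\frac{1}{2}-\frac{x}{2}$, the frequency constraint $p\in[\frac{1}{2}-c',\frac{1}{2}+c']$ is exactly $|x|\leq 2c'$. Since $y=\frac{1}{2(\beta-1)}+\frac{x}{2}$, the membership $y\in(0,\frac{1}{\beta-1})$ holds as soon as $|x|<\frac{1}{\beta-1}$, which is automatic for $\beta$ near $1$. Taking $c(\beta)=\min\{2c'(\beta),\frac{1}{2(\beta-1)}\}$ secures both, and pulling the resulting digits back via $\epsilon_i=2\delta_i-1$ yields the desired hybrid expansion of $x$.

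I do not expect a serious obstacle: once Theorem \ref{frequency theorem}(3) is in hand the result is essentially a corollary, and the only care needed is the bookkeeping that makes the windows in $y$ and in $p$ hold simultaneously for small $x$. The one point worth double-checking is the boundary behaviour, since Theorem \ref{frequency theorem}(3) requires $y$ in the open interval but allows $p$ in the closed interval; the choice of $c$ must therefore keep $y$ strictly interior (handled by the second term in the minimum) while still permitting $x=\pm c$ to sit at the edge of the frequency window.
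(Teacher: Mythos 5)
Your proof is correct, and it also delivers the stronger statement the paper actually proves (Theorem \ref{hybrid theorem}, i.e. the full range $\beta\in(1,\frac{1+\sqrt{5}}{2})$, which is exactly your choice $C_1=\frac{\sqrt{5}-1}{2}$). The underlying mechanism is the same in both arguments: a hybrid expansion is precisely an expansion whose digit frequency is forced to a value determined by $x$ itself, and one then invokes the paper's prescribed-frequency machinery. The difference is which version of that machinery is invoked. The paper works natively in the alphabet $\{-1,1\}$: it applies Proposition \ref{freq prop} in its $\{-1,1\}$ form to an $x\in[-2c,2c]$, obtaining an expansion with frequency of $1$'s equal to $\frac{1+x}{2}$ and of $-1$'s equal to $\frac{1-x}{2}$, so that the Ces\`aro averages converge to $x$. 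You instead conjugate the problem into the alphabet $\{0,1\}$ via $\epsilon_i=2\delta_i-1$, turning the two hybrid conditions for $x$ into a single prescribed-frequency problem for $y=\frac{x}{2}+\frac{1}{2(\beta-1)}$ with $\textrm{freq}_0(\delta_i)=\frac{1-x}{2}$, which is exactly Theorem \ref{frequency theorem}(3). This buys something concrete: the paper proves Proposition \ref{important prop} and Proposition \ref{freq prop} in detail only for the digit set $\{0,1\}$ and asserts that the $\{-1,1\}$ analogues are ``dealt with similarly'', whereas your reduction relies only on the $\{0,1\}$ statements that are proved in full, making the hybrid result a formal corollary of the proved material. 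Your endpoint bookkeeping is also sound: taking $c=\min\{2c'(\beta),\frac{1}{2(\beta-1)}\}$ keeps $y$ strictly inside $(0,\frac{1}{\beta-1})$ (as the open-interval hypothesis requires) while allowing $p=\frac{1-x}{2}$ to reach the closed endpoints $\frac{1}{2}\pm c'$, which the theorem permits.
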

A proof was subsequently provided by Dajani, Jiang, and Kempton in \cite{DKK}. They showed that one can take $C_{1}\approx 0.327.$ We improve upon this theorem in the following way.

\begin{theorem}
\label{hybrid theorem}
Let $\beta\in(1,\frac{1+\sqrt{5}}{2})$. Then there exists $c= c(\beta)>0$ such that every $x\in [-c,c]$ has a hybrid expansion.
\end{theorem}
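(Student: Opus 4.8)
The plan is to deduce Theorem \ref{hybrid theorem} from part (3) of Theorem \ref{frequency theorem} by exploiting the elementary correspondence between expansions with digit set $\{0,1\}$ and expansions with digit set $\{-1,1\}$. Given a sequence $(\delta_i)\in\{0,1\}^{\mathbb{N}}$, set $\epsilon_i=2\delta_i-1\in\{-1,1\}$. Summing the geometric series gives
$$\sum_{i=1}^{\infty}\frac{\epsilon_i}{\beta^i}=2\sum_{i=1}^{\infty}\frac{\delta_i}{\beta^i}-\frac{1}{\beta-1},$$
so that a $\{0,1\}$-expansion of $y$ is carried to a $\{-1,1\}$-expansion of $2y-\frac{1}{\beta-1}$. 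Moreover, since $\delta_i=0$ if and only if $\epsilon_i=-1$, the frequency of zeros of $(\delta_i)$ equals the asymptotic frequency of the digit $-1$ in $(\epsilon_i)$. Hence if $\textrm{freq}_0(\delta_i)=p$ then the Ces\`aro average of the digits satisfies $\frac{1}{n}\sum_{i=1}^n\epsilon_i\to(1-p)-p=1-2p$.

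With this dictionary in hand, the two requirements in the definition of a hybrid expansion become constraints of exactly the type handled by Theorem \ref{frequency theorem}(3). Let $c_0=c(\beta)>0$ be the constant supplied by that theorem, and put $c=\min\{2c_0,\frac{1}{2(\beta-1)}\}$. Given $x\in[-c,c]$, I would define $y=\frac{1}{2}(x+\frac{1}{\beta-1})$ and the target frequency $p=\frac{1-x}{2}$. First I would check the hypotheses: since $|x|\le c\le\frac{1}{2(\beta-1)}$ one has $y\in(0,\frac{1}{\beta-1})$, and since $|p-\frac12|=\frac{|x|}{2}\le c_0$ the value $p$ lies in $[\frac12-c_0,\frac12+c_0]$. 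As $\beta<\frac{1+\sqrt5}{2}$, Theorem \ref{frequency theorem}(3) then produces a $\{0,1\}$-$\beta$-expansion $(\delta_i)$ of $y$ with $\textrm{freq}_0(\delta_i)=p$.

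Finally I would set $\epsilon_i=2\delta_i-1$ and verify the two defining identities. By the displayed correspondence, $\sum_{i=1}^{\infty}\beta^{-i}\epsilon_i=2y-\frac{1}{\beta-1}=x$, while the frequency computation gives $\lim_{n\to\infty}\frac{1}{n}\sum_{i=1}^n\epsilon_i=1-2p=x$. Thus $(\epsilon_i)$ is simultaneously a $\beta$-expansion of $x$ with digit set $\{-1,1\}$ and a sequence whose running average converges to $x$, i.e.\ a hybrid expansion of $x$, which completes the argument.

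I do not expect a genuine obstacle inside this reduction: the arithmetic is forced, and the value of the expansion is automatically correct because $(\delta_i)$ is itself a $\beta$-expansion of $y$. The real content — and the only place where the hypothesis $\beta<\frac{1+\sqrt5}{2}$ enters — has already been absorbed into Theorem \ref{frequency theorem}(3), whose proof must guarantee that for such $\beta$ there is enough freedom in the choice of digits (via a sufficiently often-visited switch region) to drive the digit frequency to any prescribed value near $1/2$. Within the present argument, the one point demanding care is the \emph{simultaneous} satisfaction of the range condition $y\in(0,\frac{1}{\beta-1})$ and the frequency condition $p\in[\frac12-c_0,\frac12+c_0]$; this is precisely why $c$ is taken to be the minimum of the two natural thresholds above, so that both are met for every $x\in[-c,c]$.
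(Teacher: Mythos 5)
Your proof is correct, and while it shares the paper's key observation, it takes a slightly different formal route that is worth recording. The paper deduces Theorem \ref{hybrid theorem} directly from the $\{-1,1\}$-digit branch of Proposition \ref{freq prop}: for $x\in[-2c,2c]$ it produces a $\{-1,1\}$-expansion of $x$ itself in which the digit $-1$ occurs with frequency $\frac{1-x}{2}$, and then computes the Ces\`aro average exactly as you do. However, the paper only proves Proposition \ref{freq prop} in detail for the digit set $\{0,1\}$, asserting that the $\{-1,1\}$ case ``is dealt with similarly.'' Your argument instead invokes only the $\{0,1\}$ statement (part (3) of Theorem \ref{frequency theorem}) and transfers it to the symmetric alphabet via the affine conjugation $\delta_i\mapsto 2\delta_i-1$, which carries a $\{0,1\}$-expansion of $y=\frac{1}{2}\big(x+\frac{1}{\beta-1}\big)$ to a $\{-1,1\}$-expansion of $x$ and identifies $\textrm{freq}_0(\delta_i)$ with the asymptotic frequency of the digit $-1$ in $(\epsilon_i)$. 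This buys a small but genuine economy: it shows that the symmetric-alphabet frequency statement, at least in the form needed here, is a formal corollary of the $\{0,1\}$ statement rather than requiring a parallel proof, so your derivation rests only on what the paper proves in full. The price is the bookkeeping you already carry out correctly: one must shrink the constant to $c=\min\{2c_0,\frac{1}{2(\beta-1)}\}$ so that simultaneously $y\in(0,\frac{1}{\beta-1})$ and $p=\frac{1-x}{2}\in[\frac{1}{2}-c_0,\frac{1}{2}+c_0]$, whereas the paper's direct use of the $\{-1,1\}$ proposition needs no change of variables or domain adjustment beyond $|x|\leq 2c$. Both arguments ultimately share the same mechanism: a hybrid expansion of $x$ is nothing but a $\{-1,1\}$-expansion of $x$ whose digit $-1$ occurs with limiting frequency $\frac{1-x}{2}$, and frequency control near $\frac{1}{2}$ supplies exactly this for $x$ near $0$.
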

It would be desirable to obtain a result of the form: there exists $C>0$ such that for every $\beta\in(1,1+C)$ every $x\in(-\frac{1}{\beta-1},\frac{1}{\beta-1})$ has a hybrid expansion. However, it is an immediate consequence of the definition that if $x$ has a hybrid expansion then $x\in[-1,1]$. Since $[-1,1]\subsetneq(\frac{-1}{\beta-1},\frac{1}{\beta-1})$ for all $\beta\in(1,2)$ it is clear that such a result is not possible. Note that if we normalised by a function that decayed at a slower rate than $n^{-1}$ we would not necessarily have this obstruction. The following result shows that if we replace $n^{-1}$ with another normalising function that satisfies a certain growth condition, then we have our desired result.

\begin{theorem}
\label{slow growth theorem}
Let $\beta\in(1,\frac{1+\sqrt{5}}{2}).$ Then there exists $c=c(\beta)>0$ such that if $f:\mathbb{N}\to (0,\infty)$ is a strictly increasing function which satisfies $$\limsup_{n\to\infty}f(n+1)-f(n)<c$$and $$\lim_{n\to\infty}f(n)= \infty,$$ then for every $x\in (-\frac{1}{\beta-1},\frac{1}{\beta-1})$ there exists $(\epsilon_i)\in\{-1,1\}^{\mathbb{N}}$ such that $$x=\sum_{i=1}^{\infty}\frac{\epsilon_{i}}{\beta^{i}}$$ and $$x=\lim_{n\to\infty}\frac{1}{f(n)}\sum_{i=1}^{n}\epsilon_i.$$
\end{theorem}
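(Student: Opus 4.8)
The plan is to recast the problem as the simultaneous control of two scalar quantities along a digit sequence built one symbol at a time, and to use the golden ratio hypothesis to guarantee that both can be kept bounded at once. Given a candidate sequence $(\epsilon_i)$, write $M=\frac{1}{\beta-1}$ and define the \emph{remainder} $r_n$ by $r_0=x$ and $r_n=\beta r_{n-1}-\epsilon_n$, together with the \emph{discrepancy} $D_n=\sum_{i=1}^n\epsilon_i-xf(n)$. Two elementary observations drive everything. First, $x-\sum_{i=1}^n\epsilon_i\beta^{-i}=\beta^{-n}r_n$, so if the construction keeps $r_n\in[-M,M]$ for all $n$, then $(\epsilon_i)$ is automatically a $\beta$-expansion of $x$. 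Second, since $f(n)\to\infty$, if it keeps $(D_n)$ bounded then $\frac{1}{f(n)}\sum_{i=1}^n\epsilon_i=x+\frac{D_n}{f(n)}\to x$. Thus it suffices to produce $(\epsilon_i)$ for which $r_n\in[-M,M]$ for all $n$ and $(D_n)$ is bounded.

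The two digits act in opposition: $\epsilon_n=+1$ sends $r_{n-1}\mapsto\beta r_{n-1}-1$ and raises $D$ by $1-x(f(n)-f(n-1))$, whereas $\epsilon_n=-1$ sends $r_{n-1}\mapsto\beta r_{n-1}+1$ and lowers $D$ by $1+x(f(n)-f(n-1))$. A direct check shows both choices keep $r_n\in[-M,M]$ precisely when $r_{n-1}$ lies in the symmetric \emph{switch region} $[-S,S]$ with $S=\frac{M-1}{\beta}=\frac{2-\beta}{\beta(\beta-1)}$; outside it the admissible digit is forced. I would choose $c$ small enough that $Mc<\frac12$, so that the per-step drift of the target, $|x(f(n)-f(n-1))|\le Mc$, is strictly smaller than the unit change produced by $\epsilon_n$; this is exactly what lets the argument cover every $x\in(-M,M)$ rather than only small $x$ as in Theorem \ref{hybrid theorem} (where $f(n)=n$ gives per-step drift $|x|$, which is too large). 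The construction is then: whenever $r_{n-1}$ lies in a slightly smaller free window $[-\tau,\tau]$ with $\tau<S$, take the free digit that moves $D_{n-1}$ toward $0$ (namely $\epsilon_n=-1$ if $D_{n-1}>0$ and $\epsilon_n=+1$ otherwise); outside $[-\tau,\tau]$ take $+1$ when $r_{n-1}>\tau$ and $-1$ when $r_{n-1}<-\tau$. Note the built-in feedback: lowering $D$ pushes $r$ upward and raising $D$ pushes $r$ downward, so the two controls brake each other.

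The heart of the matter is to show $(D_n)$ stays bounded, and this is where $\beta<\frac{1+\sqrt5}{2}$ is decisive. The only way $D$ can accumulate is during a forced excursion, which happens when $r$ approaches one of the repelling fixed points $\pm M$ of the maps $r\mapsto\beta r\mp1$. Because free moves are confined to $[-\tau,\tau]$, a single free $-1$ can raise $r$ only as far as $\beta\tau+1$, which sits at distance $\beta(S-\tau)>0$ below $M$; since the fixed point $M$ is repelling with multiplier $\beta$, applying $+1$ repeatedly returns $r$ to $[-\tau,\tau]$ within $N=N(\beta,\tau)$ steps, a bound independent of $n$, $x$ and $f$ (the lower side is symmetric, and any bounded ping-pong between the two sides only enlarges the trapping region by a fixed amount). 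The budget then balances: each forced excursion changes $D$ by at most $N(1+Mc)$, each free step corrects $D$ by at least $1-Mc$, and because the target drifts by less than a unit per step the backlog of corrections demanded stays proportional to the excursion length rather than growing. Carrying out this bookkeeping traps the pair $(r_n,D_n)$ in a bounded region. The equivalence $\beta(\beta-1)<1\Longleftrightarrow\beta<\frac{1+\sqrt5}{2}$ is precisely the inequality making $S$ large enough for an admissible $\tau$ to exist and for the $D$-budget to close with a strictly positive constant $c$.

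I expect this final coupled boundedness estimate — quantifying the excursion length $N$ against the corrective power of the free window $[-\tau,\tau]$, uniformly over the slowly varying target $xf(n)$ — to be the main obstacle; the rest is bookkeeping. Once $(r_n,D_n)$ is shown to remain bounded, the two observations of the first paragraph immediately yield both required conclusions, namely that $(\epsilon_i)$ is a $\beta$-expansion of $x$ and that $\frac{1}{f(n)}\sum_{i=1}^n\epsilon_i\to x$.
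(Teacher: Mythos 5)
Your reduction of the theorem to two requirements --- keep the remainder $r_n$ in $[-M,M]$ for validity, keep the discrepancy $D_n$ bounded for the frequency claim --- matches the skeleton of the paper's argument, and your switch region $[-S,S]$ is exactly the paper's $\widetilde{\mathcal{S}}_{\beta}$. But the digit-selection rule at the heart of your construction points the wrong way, and the budget does not close. Summing the relation $\epsilon_i=\beta r_{i-1}-r_i$ gives
\begin{equation*}
\sum_{i=1}^{n}\epsilon_i=(\beta-1)\sum_{i=0}^{n-1}r_i+r_0-r_n,
\end{equation*}
so, up to a bounded boundary term, the digit sum is $(\beta-1)$ times the time-sum of orbit positions: to drive $\sum\epsilon_i$ downward the orbit must spend time at \emph{negative} positions, and the only digit that moves $r$ downward is $+1$. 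Your rule does the opposite. When $D_{n-1}>0$ you play the free digit $-1$, which moves $r$ \emph{up}; from every $r>\frac{\tau-1}{\beta}$ (the entire upper part of the free window) this exits the window, and the forced return consists of $k\geq 1$ digits $+1$, so the round's net contribution to $\sum\epsilon_i$ is $k-1\geq 0$ --- at best neutral, typically wrong-signed. These excursions are not occasional perturbations costing $N(1+Mc)$ once, as your budget assumes; they are triggered by precisely the correction attempts and over-compensate each one. Concretely take $\beta=\tfrac{3}{2}$ (so $M=2$, $S=\tfrac{2}{3}$), $\tau=\tfrac{1}{2}$, $x=\tfrac{1}{10}$, and $f$ with tiny increments, starting from $r_0=x$. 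Every landing point in the window after a forced run lies below $\beta(-\tau)+1=\tfrac14<\frac{1-\tau}{\beta}=\tfrac13$, so \emph{every} subsequent free move triggers an excursion ($k\geq1$), runs with $k\geq2$ recur indefinitely, and $D_n\to-\infty$; the produced expansion has $\frac{1}{f(n)}\sum_{i=1}^{n}\epsilon_i$ eventually negative, so it cannot converge to $x>0$. The feedback you describe as the two controls ``braking each other'' is exactly the failure mechanism: each control undoes itself with interest.

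This is why the paper's Proposition \ref{important prop} builds its blocks the other way round: to gain a net surplus of the digit $-1$ it first plays the \emph{opposite} digit $+1$ (sacrificing one step to push the orbit low) and then harvests the forced run of $-1$'s on the way back up; the genuinely nontrivial point --- and the real content of that proposition --- is that the ``undo'' case (a return run of length exactly one, net zero) can occur only boundedly many consecutive times, because each occurrence pushes the orbit away from the fixed point of the undo cycle by a factor $\beta^{2}$ (the $\delta,\beta^{2}\delta,\beta^{4}\delta,\dots$ escape argument in its proof). Your proposal contains no substitute for this mechanism, and the theorem then follows in the paper by the same sign-change bookkeeping you envisage, applied block-by-block rather than digit-by-digit. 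Two further symptoms confirm the gap. First, your parenthetical claim that ping-pong between the two forced sides is bounded is unproved and false for general $\tau<S$: if $\tau<\frac{1}{1+\beta}$, the forced dynamics outside $[-\tau,\tau]$ admit the period-two orbit $\pm\frac{1}{1+\beta}$ (indeed $T_{1}\big(\frac{1}{1+\beta}\big)=-\frac{1}{1+\beta}$ and $T_{-1}\big(-\frac{1}{1+\beta}\big)=\frac{1}{1+\beta}$), so excursion lengths are unbounded and no $N(\beta,\tau)$ exists; excluding this forces $\tau\geq\frac{1}{1+\beta}$, and the compatibility $\frac{1}{1+\beta}<S$ is equivalent to $\beta^{2}<\beta+1$, i.e.\ $\beta<\frac{1+\sqrt{5}}{2}$. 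Second, the place where you say the golden-ratio hypothesis enters --- the existence of some $\tau<S$ --- holds for every $\beta\in(1,2)$, so as written your argument never genuinely uses $\beta<\frac{1+\sqrt{5}}{2}$. That alone shows something is wrong: for $\beta\in(\frac{1+\sqrt{5}}{2},2)$ the point $x=\frac{1}{\beta+1}$ has the unique $\{-1,1\}$-expansion $(1,-1)^{\infty}$ (see Section \ref{remarks}), whose partial digit sums are bounded, so the conclusion of the theorem fails there.
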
The following corollary is an immediate consequence of Theorem \ref{slow growth theorem}.
\begin{corollary}
Let $\beta\in(1,\frac{1+\sqrt{5}}{2}).$ Then for every $x\in(-\frac{1}{\beta-1},\frac{1}{\beta-1})$ there exists $(\epsilon_i)\in\{-1,1\}^{\mathbb{N}}$ such that $$x=\sum_{i=1}^{\infty}\frac{\epsilon_{i}}{\beta^{i}}$$ and $$x=\lim_{n\to\infty}\frac{1}{n^{1/2}}\sum_{i=1}^{n}\epsilon_i.$$
\end{corollary}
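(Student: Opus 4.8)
The plan is to apply Theorem \ref{slow growth theorem} directly with the choice $f(n)=n^{1/2}$, so that the entire argument reduces to checking that this particular $f$ meets the three hypotheses of that theorem. First I would note that $f(n)=\sqrt{n}$ maps $\mathbb{N}$ into $(0,\infty)$ and is strictly increasing, and that $\lim_{n\to\infty}\sqrt{n}=\infty$; both assertions are immediate and require no computation.

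The only hypothesis demanding any argument is the bound on consecutive differences, and the key step here is the elementary rationalisation
$$\sqrt{n+1}-\sqrt{n}=\frac{(n+1)-n}{\sqrt{n+1}+\sqrt{n}}=\frac{1}{\sqrt{n+1}+\sqrt{n}}.$$
Since the denominator tends to infinity, this difference tends to $0$, and therefore
$$\limsup_{n\to\infty}\big(f(n+1)-f(n)\big)=0.$$
Because the constant $c=c(\beta)>0$ provided by Theorem \ref{slow growth theorem} is strictly positive, the required inequality $0<c$ holds automatically, so the growth condition is satisfied.

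Having confirmed all three hypotheses, I would invoke Theorem \ref{slow growth theorem} with this $f$: it furnishes, for every $x\in(-\frac{1}{\beta-1},\frac{1}{\beta-1})$, a sequence $(\epsilon_i)\in\{-1,1\}^{\mathbb{N}}$ with $x=\sum_{i=1}^{\infty}\epsilon_i\beta^{-i}$ and $x=\lim_{n\to\infty}f(n)^{-1}\sum_{i=1}^{n}\epsilon_i$, which is precisely the claim after substituting $f(n)=n^{1/2}$. As the stronger theorem does all the heavy lifting, there is no genuine obstacle; the only point meriting attention is that the growth hypothesis of Theorem \ref{slow growth theorem} is purely asymptotic, so the fact that the limsup of $\sqrt{n+1}-\sqrt{n}$ is $0$ suffices no matter how small the base-dependent constant $c(\beta)$ turns out to be.
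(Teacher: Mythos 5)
Your proposal is correct and is exactly the paper's intended argument: the paper states this corollary as an immediate consequence of Theorem \ref{slow growth theorem}, obtained by taking $f(n)=n^{1/2}$. Your verification of the hypotheses — in particular that $\limsup_{n\to\infty}\bigl(\sqrt{n+1}-\sqrt{n}\bigr)=0<c(\beta)$ regardless of how small the base-dependent constant is — fills in precisely the routine details the paper leaves to the reader.
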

\subsection{A family of overlapping self-affine sets and simultaneous expansions}
Let $\{S_{j}\}_{j=1}^{m}$ be a collection of contracting maps acting on $\mathbb{R}^{d}$. A result of Hutchinson \cite{Hut} states that there exists a unique non-empty compact set $\Lambda\subseteq \mathbb{R}^{d}$ such that $$\Lambda=\bigcup_{j=1}^{m}S_j(\Lambda).$$ We call $\Lambda$ the \emph{attractor associated to $\{S_j\}$}. Often one is interested in determining the topological properties of $\Lambda$. When the collection $\{S_j\}$ consists solely of similarities than the attractor $\Lambda$ is reasonably well understood. However, when the collection $\{S_j\}$ contains affine maps the situation is known to be much more complicated.

In this paper we focus on the following family of self-affine sets. Let $1<\beta_{1},\beta_{2},\beta_{3}\leq 2$ and $$S_{-1}(x,y)=\Big(\frac{x-1}{\beta_{1}},\frac{x-1}{\beta_{2}}\Big) \textrm{ and }S_{1}(x,y)=\Big(\frac{x+1}{\beta_{1}},\frac{x+1}{\beta_{3}}\Big).$$ For this collection of contractions we denote the associated attractor by $\Lambda_{\b_1,\b_2,\b_3}.$ In Figure \ref{figa} we include some examples.

\begin{figure}[h]
\includegraphics[width=12cm, height=5cm]{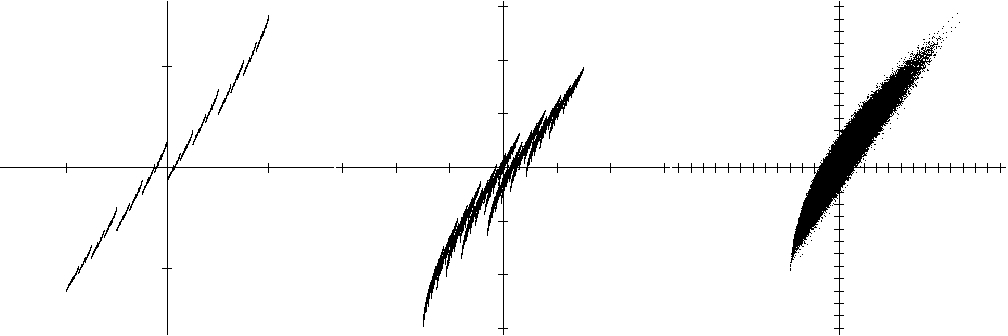}
\centering
\caption{A plot of $\Lambda_{2,1.81,1.66},\Lambda_{1.66,1.33,1.53},\Lambda_{1.2,1.11,1.05}$}
\label{figa}
\end{figure}

When $\beta_2=\b_3$ we denote $\Lambda_{\b_1,\b_2,\b_3}$ by $\Lambda_{\b_1,\b_2}.$ The case where $\beta_2=\beta_3$ was studied in \cite{DKK} and \cite{HS}. One problem the authors of these papers were particularly interested in was determining those pairs $(\beta_1,\beta_2)$ for which the attractor $\Lambda_{\b_1,\b_2}$ has non-empty interior. The best result in this direction is the following result due to Hare and Sidorov \cite{HS}.
\begin{theorem}
\label{KevinNik}
If $\beta_1\neq \beta_2$ and
\begin{equation}
\label{KevinNik condition}
\Big|\frac{\beta_{2}^{8}-\beta_{1}^{8}}{\beta_{2}^{7}-\beta_{1}^{7}}\Big|+\Big|\frac{\beta_{2}^7\beta_{1}^{7}(\beta_{2}-\beta_{1})}{\beta_{2}^{7}-\beta_{1}^{7}}\Big|\leq 2.
\end{equation} Then $\Lambda_{\b_1,\b_2}$ has non-empty interior and $(0,0)\in \Lambda^{\mathrm{o}}.$
\end{theorem}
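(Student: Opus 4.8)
The plan is to exploit the explicit parameterisation of the attractor. Writing $A=\mathrm{diag}(\beta_1^{-1},\beta_2^{-1})$ and $c=(\beta_1^{-1},\beta_2^{-1})$, the two maps take the form $S_a(z)=Az+ac$ with $a\in\{-1,1\}$, and iterating shows
$$\Lambda_{\beta_1,\beta_2}=\Big\{\Big(\sum_{i=1}^{\infty}\frac{a_i}{\beta_1^{i}},\ \sum_{i=1}^{\infty}\frac{a_i}{\beta_2^{i}}\Big):(a_i)\in\{-1,1\}^{\mathbb{N}}\Big\}.$$
Thus a point lies in $\Lambda_{\beta_1,\beta_2}$ precisely when it admits a \emph{simultaneous} expansion in the two bases sharing a common digit string, and proving $(0,0)\in\Lambda^{\mathrm{o}}$ is the same as showing that every point in some neighbourhood of the origin has such a representation.

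First I would record the only soft ingredient needed: if $R\subseteq\mathbb{R}^{2}$ is compact and satisfies the \emph{covering inclusion} $R\subseteq S_{-1}(R)\cup S_{1}(R)$, then $R\subseteq\Lambda_{\beta_1,\beta_2}$. Indeed, applying the inclusion repeatedly gives $R\subseteq\bigcup_{w\in\{-1,1\}^{k}}S_{w}(R)$ for every $k$, where $S_w$ denotes the corresponding composition; since each $S_a$ is a contraction the right-hand side converges in the Hausdorff metric to $\Lambda_{\beta_1,\beta_2}$, and as $\Lambda_{\beta_1,\beta_2}$ is closed this forces $R\subseteq\Lambda_{\beta_1,\beta_2}$. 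The whole theorem therefore reduces to exhibiting one region $R$ with $(0,0)$ in its interior for which the covering inclusion holds; symmetry of the construction about the origin will then deliver $(0,0)\in\Lambda^{\mathrm{o}}$ automatically.

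The structural feature that makes this feasible is that $S_{-1}$ and $S_{1}$ share the linear part $A$ and differ only by a translation, $S_{1}(z)-S_{-1}(z)=2c$. Hence $S_{1}(R)$ is the translate of $S_{-1}(R)$ by the fixed vector $2c$, and the covering inclusion asks precisely that two translates of the common shape $AR$, pushed apart to $AR\pm c$, together swallow $R$. This is a convexity problem best handled with a \emph{parallelogram} adapted to the maps rather than an axis-parallel rectangle: I would take $R$ centred at the origin with edges in two directions singled out by the convex geometry of the attractor, namely $c$ and $A^{7}c$. With $R$ of this shape the two-dimensional covering condition decouples, in the skew basis $\{c,A^{7}c\}$, into a pair of one-dimensional length comparisons between the overlap of the two images and the corners that protrude from it.

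The main obstacle is this last, quantitative step: fixing the correct aspect ratio of $R$ — equivalently the number of iterations, here seven, separating its two edge directions — and verifying the length comparisons. Solving the resulting $2\times 2$ linear system by Cramer's rule introduces the determinant of the matrix with columns $c$ and $A^{7}c$, which up to a positive factor and sign equals $\beta_2^{7}-\beta_1^{7}$; this is exactly the denominator appearing in \eqref{KevinNik condition}, while the overlap and protrusion lengths are themselves determinants of neighbouring direction vectors and supply the two numerators $\beta_2^{8}-\beta_1^{8}$ and $\beta_1^{7}\beta_2^{7}(\beta_2-\beta_1)$. Requiring that the overlap dominate the protrusions then yields precisely the inequality $\big|\tfrac{\beta_2^{8}-\beta_1^{8}}{\beta_2^{7}-\beta_1^{7}}\big|+\big|\tfrac{\beta_1^{7}\beta_2^{7}(\beta_2-\beta_1)}{\beta_2^{7}-\beta_1^{7}}\big|\le 2$. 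Finally, the hypothesis $\beta_1\neq\beta_2$ is what keeps the parallelogram non-degenerate: when $\beta_1=\beta_2$ the two coordinates coincide, $\Lambda_{\beta_1,\beta_2}$ collapses onto the diagonal segment, and it has empty interior, so the condition genuinely requires the two bases to differ.
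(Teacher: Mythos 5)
Your preparatory steps are sound: the closed form of $\Lambda_{\beta_1,\beta_2}$, the covering lemma (if $R$ is compact and $R\subseteq S_{-1}(R)\cup S_{1}(R)$, then $R\subseteq\Lambda_{\beta_1,\beta_2}$), and the reduction of the theorem to producing such an $R$ with the origin in its interior are all correct. Note, however, that the paper does not prove Theorem \ref{KevinNik} at all: it quotes it from \cite{HS}, and the proof there (as the paper itself summarises just after Theorem \ref{simultaneous theorem}) constructs the polynomial $P(x)=x^{8}+ax^{7}+b$ with $a=-\frac{\beta_{2}^{8}-\beta_{1}^{8}}{\beta_{2}^{7}-\beta_{1}^{7}}$ and $b=\frac{\beta_{1}^{7}\beta_{2}^{7}(\beta_{2}-\beta_{1})}{\beta_{2}^{7}-\beta_{1}^{7}}$, so that $P(\beta_1)=P(\beta_2)=0$, the linear coefficient vanishes, $b\neq 0$, and \eqref{KevinNik condition} reads exactly $|a|+|b|\leq 2$; it then runs a digit-by-digit replacement algorithm. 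Your covering framework is thus a genuinely different route in principle, but as written it stops exactly where the theorem's content begins: the construction of $R$ and the verification of the covering inclusion, which you yourself label ``the main obstacle,'' are asserted rather than carried out.

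Moreover the assertion is doubtful, not merely unproved. If $R$ is a parallelogram with edge directions $c$ and $A^{7}c$, then $S_{\pm 1}(R)=AR\pm c$ are parallelograms with edge directions $Ac$ and $A^{8}c$, parallel to neither edge of $R$; in the skew basis $\{c,A^{7}c\}$ the set $R$ is a coordinate box while its two prospective covers are skew, so the inclusion $R\subseteq (AR-c)\cup(AR+c)$ does not split into ``a pair of one-dimensional length comparisons.'' At best it reduces to a one-parameter family of chord conditions along lines parallel to $c$, whose binding cases (the corners of $R$ and the seam between the two images along the top and bottom edges of $R$) impose competing constraints on the two side lengths. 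A numerical test with $\beta_1=1.05$, $\beta_2=1.1$ (for which $|a|+|b|\approx 1.48$, comfortably inside \eqref{KevinNik condition}) shows these constraints are already nearly incompatible for your shape, so there is no reason to believe this parallelogram recovers \eqref{KevinNik condition}, let alone the boundary case $|a|+|b|=2$. Your determinant observations are correct algebra, but they are just a geometric restatement of $P(\beta_1)=P(\beta_2)=0$, i.e.\ of the identity $c=-a\,Ac-b\,A^{8}c$; the entire difficulty is converting that identity into either a terminating algorithm (as in \cite{HS}) or an explicit region satisfying the covering inclusion --- and if one mimics the algorithm geometrically, the natural invariant region is a zonotope spanned by several of the vectors $A^{i}c$ with weights built from $a$ and $b$, not a two-edge parallelogram. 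That step is missing, so the proposal does not establish the theorem.
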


Let $\pi$ denote the projection from $\mathbb{R}^{2}$ onto the $x$-axis. For each $x\in\pi(\Lambda_{\b_1,\b_2,\b_3})$ let $$\Lambda^{x}_{\b_1,\b_2,\b_3}:=\{y\in\mathbb{R}:(x,y)\in\Lambda_{\b_1,\b_2,\b_3}\}.$$ We call $\Lambda^{x}_{\b_1,\b_2,\b_3}$ the \emph{fibre of $x$}. Note that $\pi(\Lambda_{\b_1,\b_2,\b_3})=[\frac{-1}{\beta_1-1},\frac{1}{\beta_1-1}]$. The following statement is our main result for $\Lambda_{\b_1,\b_2,\b_3}.$

\begin{theorem}
\label{Affine theorem}
Let $\beta_{1}\in(1,\frac{1+\sqrt{5}}{2}).$ Then there exists $c=c(\beta_1)>0$ such that for all $\beta_{2},\beta_{3}\in(1,1+c)$ and $x\in(-\frac{1}{\beta_1-1},\frac{1}{\beta_1-1})$ the fibre $\Lambda_{\b_1,\b_2,\b_3}^{x}$ contains an interval. Moreover $\Lambda_{\beta_1,\beta_2,\b_3}$ has non-empty interior and $(0,0)\in\Lambda_{\beta_1,\beta_2,\b_3}^{o}$
\end{theorem}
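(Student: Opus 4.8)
The plan is to reduce the statement to a \emph{simultaneous expansion} problem in base $\beta_1$ and then feed it into the hybrid/slow-growth machinery of Theorems \ref{hybrid theorem} and \ref{slow growth theorem}, whose range of validity $(1,\frac{1+\sqrt5}{2})$ is exactly the range imposed on $\beta_1$. First I would record the fibre in coding form. Writing $b(-1)=\beta_2$, $b(1)=\beta_3$, iterating the maps $S_{\pm1}$ shows that $(x,y)\in\Lambda_{\b_1,\b_2,\b_3}$ precisely when there is a sequence $(\epsilon_i)\in\{-1,1\}^{\mathbb N}$ with $x=\sum_{i\ge1}\epsilon_i\beta_1^{-i}$ and $y=\sum_{i\ge1}\epsilon_i\prod_{j\le i}b(\epsilon_j)^{-1}$. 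Hence $\Lambda^{x}_{\b_1,\b_2,\b_3}$ is the set of vertical values obtained as $(\epsilon_i)$ ranges over all $\{-1,1\}$-codings of $x$ in base $\beta_1$; equivalently, with $T^{(x)}_\epsilon(t)=\beta_1 t-\epsilon$ and $T^{(y)}_\epsilon(t)=b(\epsilon)t-\epsilon$ (the inverse branches of $S_{\pm1}$), the relation $y\in\Lambda^{x}_{\b_1,\b_2,\b_3}$ says that some digit sequence keeps the $T^{(x)}$-orbit in $[-\tfrac{1}{\beta_1-1},\tfrac{1}{\beta_1-1}]$ while the $T^{(y)}$-orbit stays bounded. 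With this in hand, to obtain $(0,0)\in\Lambda^{o}_{\b_1,\b_2,\b_3}$ it suffices to produce $\eta,\zeta>0$ with $(-\zeta,\zeta)\subseteq\Lambda^{x}_{\b_1,\b_2,\b_3}$ for every $x\in(-\eta,\eta)$, since then the open box $(-\eta,\eta)\times(-\zeta,\zeta)\subseteq\Lambda_{\b_1,\b_2,\b_3}$.

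Next I would expose why the vertical value is controllable. Setting $E_n=\sum_{i\le n}\epsilon_i$ and $w_i=\prod_{j\le i}b(\epsilon_j)^{-1}$, Abel summation gives $y=\sum_{i\ge1}E_i\,(w_i-w_{i+1})$, where each increment $w_i-w_{i+1}=w_i\,(b(\epsilon_{i+1})-1)/b(\epsilon_{i+1})$ is positive of size $O\!\big(\max(\beta_2,\beta_3)-1\big)$, while $\sum_{i\ge1}(w_i-w_{i+1})=w_1=b(\epsilon_1)^{-1}$ is close to $1$. Thus, up to the choice of first digit, $y$ is a positively weighted average of the partial digit sums $(E_i)$ with \emph{uniformly small} weights. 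Consequently, prescribing the growth profile of $(E_n)$ steers $y$, and because the individual weights are small --- this is exactly where the hypothesis $\beta_2,\beta_3\in(1,1+c)$ is consumed --- the set of attainable $y$ is a genuine interval rather than a Cantor-type set.

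The freedom needed to prescribe $(E_n)$ comes from the redundancy of $\beta_1$-codings: since $\beta_1<\frac{1+\sqrt5}{2}$, every interior $x$ has an ample family of codings and both digits remain available infinitely often, so $E_n$ can be pushed up or down at a controlled rate. This is precisely the mechanism behind Theorems \ref{hybrid theorem} and \ref{slow growth theorem} (cf.\ Theorem \ref{frequency theorem}(3)): for a fixed $x$ one can realize codings whose normalized digit sums follow a prescribed slowly-growing profile, and Theorem \ref{slow growth theorem} is stated for a \emph{general} normalizing function exactly so that the two distinct vertical rates $\beta_2\neq\beta_3$ (which make the weight profile $w_i$ depend on the digits) can be absorbed. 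I would therefore fix a target $y$ in a candidate interval and run a recursive targeting in the $(x_n,y_n)$-dynamics: at each step, when the switch region of $x_n$ permits a choice, select the digit moving the rescaled remainder $u_n:=w_n^{-1}(y-y_n)$ toward $0$, and otherwise accept the forced digit; one then shows $u_n$ stays bounded, so $y-y_n\to0$ and $y$ is attained exactly. A uniform version of these estimates over $x\in(-\eta,\eta)$ yields a common interval $(-\zeta,\zeta)$ in every such fibre (and, for general interior $x$, a possibly smaller interval), giving the open box and hence $(0,0)\in\Lambda^{o}_{\b_1,\b_2,\b_3}$.

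The main obstacle is precisely the coupling in the targeting step: because the horizontal coding of $x$ is fixed, during the \emph{forced} stretches (where $x_n$ leaves the switch region) we cannot steer $y$ at all, and we must show that these stretches are short enough --- this is governed by $\beta_1<\frac{1+\sqrt5}{2}$ --- and that the admissible per-step vertical increments under the two bases $\beta_2,\beta_3$ are small enough --- governed by $c$ --- so that $u_n$ never escapes the range in which the fibre of $x_n$ still contains the needed target. Quantifying this interplay, ample horizontal switching versus small, digit-dependent vertical increments, is the heart of the argument and is where the hypotheses $\beta_1<\frac{1+\sqrt5}{2}$ and $\beta_2,\beta_3\in(1,1+c)$ are simultaneously used.
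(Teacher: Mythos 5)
Your architecture coincides with the paper's: code the fibre as in \eqref{fibre identification}, fix a target $y$, and run a recursive digit-selection scheme in which $\beta_1<\frac{1+\sqrt5}{2}$ supplies steering freedom and $\beta_2,\beta_3\in(1,1+c)$ keeps the vertical increments dominated; your Abel summation and the remainder dynamics $u_{n+1}=b(\epsilon_{n+1})u_n-\epsilon_{n+1}$ are correct. However, there is a genuine gap at exactly the step you defer, and the targeting rule you state would in fact fail. Suppose $u_n<0$, so your greedy rule wants $\epsilon_{n+1}=-1$, and suppose the horizontal orbit sits at the right endpoint of the switch region $\widetilde{\mathcal{S}}_{\beta_1}=\big[\frac{\beta_1-2}{\beta_1(\beta_1-1)},\frac{2-\beta_1}{\beta_1(\beta_1-1)}\big]$, where the choice is permitted. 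Then $x_{n+1}=\beta_1x_n+1=\frac{1}{\beta_1-1}$, the fixed point of $T_1$, and every later digit is forced to equal $+1$; each forced step maps $u\mapsto\beta_3u-1$, so $u_n\to-\infty$ and $w_nu_n=y-y_n$ does \emph{not} tend to $0$ (it tends to a nonzero constant), i.e.\ $y$ is missed. For $x_n$ near this configuration the forced stretch is arbitrarily long, so the assertion that forced stretches are ``short enough, governed by $\beta_1<\frac{1+\sqrt5}{2}$'' is false for single-digit greedy choices: the length of the forced stretch depends on the choice just made, not only on $\beta_1$. Moreover, even uniformly bounded forced stretches would not suffice, since a forced run of $\ell$ digits $+1$ costs $u$ about $\ell$ while one free choice recovers only about $1$; what is needed is a guaranteed \emph{net surplus} over a whole excursion.

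The missing ingredient is precisely the paper's Proposition \ref{important prop}: for $\beta_1<\frac{1+\sqrt5}{2}$ and every $x$ in the smaller interval $\widetilde{\mathcal{O}}_{\beta_1}$ (strictly inside the switch region, cf.\ Lemma \ref{basic lemma2}), there are words $\omega^{-1},\omega^{1}$ of length at most $n(\beta_1)$ which return the orbit to $\widetilde{\mathcal{O}}_{\beta_1}$ and carry a strict net surplus of $-1$'s, respectively $+1$'s. This block-level statement requires a genuine construction (the paper's two-case expansivity argument) and is exactly where the hypothesis on $\beta_1$ enters; it cannot be replaced by ``both digits remain available infinitely often'', nor obtained from Theorems \ref{hybrid theorem} or \ref{slow growth theorem} as black boxes: those control a single-base Ces\`aro average with a digit-independent normalizer, whereas your weights $w_i$ depend on the digit sequence itself, and in the paper those theorems and Theorem \ref{Affine theorem} are all \emph{consequences} of Proposition \ref{important prop}, not of each other. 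One also needs the quantitative comparison that the surplus of one digit per block beats the error accumulated over at most $n(\beta_1)$ vertical steps; this is the paper's Proposition \ref{technical prop}, which yields the threshold $\frac{1}{2(\max(\beta_2,\beta_3)^{n(\beta_1)}-1)}>n(\beta_1)$ defining $c(\beta_1)$. With these two inputs your scheme (the paper runs it as nested intervals with extremal tails $(\omega^{\mp1}_i)$ rather than a bounded remainder $u_n$, an equivalent bookkeeping) does go through, including the uniform statement for $x\in\widetilde{\mathcal{O}}_{\beta_1}$, where no initial prefix is needed, which gives $(0,0)\in\Lambda^{o}_{\beta_1,\beta_2,\beta_3}$.
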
We emphasise that Theorem \ref{Affine theorem} covers the case where $\beta_2\neq \beta_3$. Our approach lends itself to explicit calculation and following our method one can obtain a lower bound for the value $c$ appearing in Theorem \ref{Affine theorem}. We include some explicit calculations in Section \ref{Section Explicit calculations}.

Note that for any $\beta_1$ sufficiently close to $\frac{1+\sqrt{5}}{2}$ the set of $\beta_2\in(1,2)$ satisfying \eqref{KevinNik condition} is empty. Consequently Theorem \ref{Affine theorem} provides new examples of $\beta_1,\beta_2$ for which $\Lambda_{\beta_1,\beta_2}^{o}$ is non-empty. Theorem \ref{Affine theorem} is also optimal in the following sense. For any $\beta_1\in [\frac{1+\sqrt{5}}{2},2)$ and $\beta_{2},\beta_{3}\in(1,2),$ there exists $x\in (-\frac{1}{\beta_1-1},\frac{1}{\beta_1-1})$ such that the fibre $\Lambda^{x}_{\b_1,\b_2,\b_3}$ is countable and therefore does not contain an interval. We explain why this is the case in Section \ref{remarks}.

It is natural to ask whether the property $\Lambda_{\b_1,\b_2,\b_3}^{x}$ contains an interval for every $x\in  (-\frac{1}{\beta_1-1},\frac{1}{\beta_1-1})$ is stronger than the property $\Lambda_{\beta_1,\beta_2,\b_3}^{o}\neq \emptyset$. This is in fact the case and is a consequence of the following proposition.

\begin{proposition}
$\Lambda_{\beta_1,\beta_2,\b_3}^{o}\neq \emptyset$ if and only if $\{x: \Lambda_{\b_1,\b_2,\b_3}^{x}\textrm{ contains an interval}\}$ contains an open dense subset of $[\frac{-1}{\beta_1-1},\frac{1}{\b_1-1}]$.
\end{proposition}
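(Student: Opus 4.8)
Throughout write $\Lambda=\Lambda_{\beta_1,\beta_2,\beta_3}$, $\Lambda^x=\Lambda^x_{\beta_1,\beta_2,\beta_3}$, and $J=[\frac{-1}{\beta_1-1},\frac{1}{\beta_1-1}]=\pi(\Lambda)$. The plan is to prove the two implications separately, in both directions using that $\Lambda=S_{-1}(\Lambda)\cup S_1(\Lambda)$ and that each $S_j$ is a \emph{diagonal} affine contraction: it acts on the $x$-coordinate by $x\mapsto\frac{x\mp 1}{\beta_1}$ and on the $y$-coordinate by an increasing affine map, independently of $x$. Hence for a finite word $w=(\epsilon_1,\dots,\epsilon_n)\in\{-1,1\}^n$ the composition $S_w:=S_{\epsilon_1}\circ\cdots\circ S_{\epsilon_n}$ has the product form $S_w(x,y)=(\phi_w(x),\psi_w(y))$ with $\phi_w,\psi_w$ increasing affine maps, $\phi_w$ contracting by $\beta_1^{-n}$; moreover $S_w(\Lambda)\subseteq\Lambda$, and $\phi_w$ is the corresponding branch of the $\beta_1$-IFS $\{x\mapsto\frac{x\pm1}{\beta_1}\}$ whose attractor is $J$.

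For the forward implication, suppose $\Lambda^{\mathrm o}\neq\emptyset$. Then $\Lambda$ contains an open box $B=(a,b)\times(c,d)$. First I would spread this box around using the maps $S_w$: since $S_w(B)=\phi_w((a,b))\times\psi_w((c,d))\subseteq S_w(\Lambda)\subseteq\Lambda$ is again an axis-parallel open box inside $\Lambda$, every $x\in\phi_w((a,b))$ has $\Lambda^x\supseteq\psi_w((c,d))$, a non-degenerate interval. Thus the open set $G:=\bigcup_{w}\phi_w((a,b))$ is contained in $\{x:\Lambda^x\text{ contains an interval}\}$. It then remains to check that $G$ is dense in $J$. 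For this, fix $x^*\in(\frac{-1}{\beta_1-1},\frac{1}{\beta_1-1})$ and $\varepsilon>0$; since $x^*\in J$ it has a $\beta_1$-expansion with digits in $\{-1,1\}$, and truncating it at a length $n$ with $\frac{2}{(\beta_1-1)\beta_1^n}<\varepsilon$ yields a word $w$ with $x^*\in\phi_w(J)$ and $\operatorname{diam}\phi_w(J)<\varepsilon$, so that $\emptyset\neq\phi_w((a,b))\subseteq\phi_w(J)\subseteq(x^*-\varepsilon,x^*+\varepsilon)$. Hence $G$ meets every neighbourhood of every interior point of $J$ and is the required open dense set.

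For the reverse implication, suppose $\{x:\Lambda^x\text{ contains an interval}\}$ contains a set $U$ that is open and dense in $J$. The key device is the Baire category theorem. For rationals $p<q$ set $A_{p,q}:=\{x\in J:\{x\}\times[p,q]\subseteq\Lambda\}$; since $\Lambda$ is compact, each $A_{p,q}$ is closed. Because a closed fibre containing an interval contains an interval with rational endpoints, one has $U\subseteq\bigcup_{p<q\in\mathbb{Q}}A_{p,q}$, hence $U=\bigcup_{p<q\in\mathbb{Q}}(A_{p,q}\cap U)$ is a countable union of relatively closed subsets of $U$. As $U$ is a non-empty open subset of the complete metric space $J$, it is a Baire space, so some $A_{p_0,q_0}\cap U$ has non-empty interior in $U$, and therefore contains an open interval $(a,b)\subseteq J$. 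Then $(a,b)\times(p_0,q_0)\subseteq\Lambda$, giving $\Lambda^{\mathrm o}\neq\emptyset$.

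The main obstacle is the reverse implication. The hypothesis only furnishes, over a dense open set of $x$, intervals inside $\Lambda^x$ whose positions and lengths may vary wildly and shrink to zero, and from this one must manufacture a single rectangle. Baire category is exactly what forces a common interval $[p_0,q_0]$ to sit in $\Lambda^x$ for all $x$ in a whole subinterval; the points needing care are the closedness of $A_{p,q}$ (which uses compactness of $\Lambda$) and the fact that a non-empty open subset of a complete metric space is a Baire space. By contrast the forward implication is essentially mechanical once the diagonal product structure of the $S_w$ is used to transport one box to a dense family of boxes.
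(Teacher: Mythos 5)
Your proposal is correct and takes essentially the same route as the paper: the forward implication spreads the open box over a dense family of images under the IFS branches, and the reverse implication is the same Baire-category argument over fibre intervals with rational endpoints. The only cosmetic differences are that you verify density explicitly via truncated $\beta_1$-expansions and apply Baire directly inside the open set $U$ after noting each $A_{p,q}$ is closed, whereas the paper derives a contradiction from covering $[\frac{-1}{\beta_1-1},\frac{1}{\beta_1-1}]$ by nowhere dense sets and invokes the closedness of $\Lambda_{\beta_1,\beta_2,\beta_3}$ only at the final step.
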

\begin{proof}
Let us start by introducing some notation. Let $F=\{x: \Lambda_{\b_1,\b_2,\b_3}^{x}\textrm{ contains an interval}\}$. Suppose $\Lambda_{\beta_1,\beta_2,\b_3}^{o}\neq \emptyset.$ Then there exists $I$ and $J$ two nontrivial open intervals such that $I\times J\subseteq \Lambda_{\beta_1,\beta_2,\beta_3}.$  Let $\phi_{-1}(x)=\frac{x-1}{\beta_1}$ and $\phi_{1}(x)=\frac{x+1}{\beta_1}.$ Since $S_{-1}(I\times J)$ is an open rectangle contained in $\Lambda_{\b_1,\b_2,\b_3},$ it follows that $\phi_{-1}(I)\subseteq F.$ Similarly $\phi_1(I)\subseteq F.$ Repeating this argument, it follows that all images of $I$ under finite concatenations of $\phi_{-1}$ and $\phi_1$ are contained in $F.$ The union of these images of $I$ is an open dense subset of $[\frac{-1}{\b_1-1},\frac{1}{\b_1-1}].$ It follows that $F$ contains an open dense subset of $[\frac{-1}{\beta_1-1},\frac{1}{\b_1-1}]$.

It remains to prove the leftwards implication. We start by partitioning the set $F$. Given $(a,b,c,d)\in\mathbb{Z}^4$ let $$F_{a,b,c,d}:=\Big\{x: \Big[\frac{a}{b},\frac{c}{d}\Big]\subseteq \Lambda_{b_1,b_2,b_3}^x\Big\}.$$ Importantly we have $$F=\bigcup_{(a,b,c,d)\in\mathbb{Z}^4,\,a/b< c/d}F_{a,b,c,d}.$$ Suppose $F_{a,b,c,d}$ is nowhere dense for all $(a,b,c,d)\in\mathbb{Z}^4.$ Since  $F$ contains an open dense set its complement is a nowhere dense set. It follows that $[\frac{-1}{\beta_1-1},\frac{1}{\beta_1-1}]$ is the countable union of nowhere dense sets. By the Baire category theorem this is not possible. Therefore there must exist $(a',b',c',d')\in\mathbb{Z}^4$ such that $a'/b'<c'/d'$ and $F_{a',b',c',d'}$ is dense in some non trivial interval $I'$. Since $\Lambda_{\b_1,\b_2,\b_3}$ is closed it follows that $$I' \times [a'/b',c'/d']\subseteq \Lambda_{\b_1,\b_2,\b_3}$$ and  $\Lambda_{\b_1,\b_2,\b_3}$ has non-empty interior.
\end{proof}
Interestingly computer simulations suggest that there exist examples where $\Lambda_{\beta_1,\beta_2,\b_3}^{o}\neq \emptyset$ yet $\{x:\Lambda_{\beta_1,\beta_2,\beta_3}^x\textrm{ is a singleton} \}$ is infinite and even has positive Hausdorff dimension. See Figure \ref{figb} for such an example.

\begin{figure}[h]
\includegraphics[width=7cm, height=7cm]{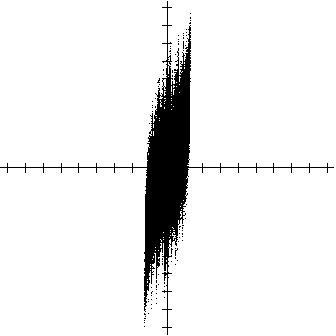}
\centering
\caption{A plot of $\Lambda_{1.8,1.05}$. For this choice of $\beta_1$ and $\beta_2$ it can be shown that $\{x:\Lambda_{\beta_1,\beta_2}^x\textrm{ is a singleton} \}$ has positive Hausdorff dimension.}
\label{figb}
\end{figure}

In \cite{Gun}, in addition to the notion of a hybrid expansion, G\"{u}nt\"{u}rk introduced the notion of a simultaneous expansion. These are defined as follows. Given $x\in [\frac{-1}{\beta_1-1},\frac{1}{\beta_1-1}]$ and $\beta_1,\beta_2\in(1,2),$ we say that a sequence $(\epsilon_i)\in\{-1,1\}^{\mathbb{N}}$ is a simultaneous $(\beta_{1},\beta_2)$ expansion of $x$ if $$x=\sum_{i=1}^{\infty}\frac{\epsilon_i}{\beta_1^i}=\sum_{i=1}^{\infty}\frac{\epsilon_i}{\beta_2^i}.$$ These expansions relate to our self-affine set via the following observation. If $\beta_2=\beta_3$ then $$\Lambda_{\b_1,\b_2}=\Big\{\Big(\sum_{i=1}^{\infty}\frac{\epsilon_i}{\beta_1^i},\sum_{i=1}^{\infty}\frac{\epsilon_i}{\beta_2^i}\Big): (\epsilon_i)\in\{-1,1\}^\mathbb{N}\Big\}.$$ Therefore $$\Big\{(x,x):x \textrm{ has a simultaneous }(\beta_1,\beta_2)\textrm{ expansion}\Big\}=\Lambda_{\b_1,\b_2} \cap \{(x,x):x\in\mathbb{R}\}.$$ In \cite{Gun} it was asserted by G\"{u}nt\"{u}rk that there exists $C>0,$ such that for $1<\beta_1<\beta_2<1+C,$ there exists $c=c(\beta_1,\beta_2)>0$ such that every $x\in (-c,c)$ has a simultaneous $(\beta_1,\beta_2)$ expansion. Note that the existence of $C>0$ satisfying the above follows if one can show that for $1<\beta_1<\beta_2<1+C$ the attractor $\Lambda_{\b_1,\b_2}$ contains $(0,0)$ in its interior. Using this observation G\"{u}nt\"{u}rk's assertion was proved to be correct in \cite{DKK}. The largest parameter space for which it is known that $(0,0)\in\Lambda_{\b_1,\b_2}^{o},$ and consequently that any $x$ sufficiently close to zero has a simultaneous $(\beta_1,\beta_2)$ expansion, is that stated in Theorem \ref{KevinNik}. Our contribution in this direction is the following theorem that follows as an immediate consequence of Theorem \ref{Affine theorem} by taking $\beta_2=\beta_3$.

\begin{theorem}
\label{simultaneous theorem}
Let $\beta_1\in(1,\frac{1+\sqrt{5}}{2})$. Then there exists $C=C(\beta_1)>0$ such that if $\beta_2\in(1,1+C),$ then every $x$ sufficiently small has a simultaneous $(\beta_1,\beta_2)$-expansion.
\end{theorem}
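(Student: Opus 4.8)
The plan is to derive Theorem~\ref{simultaneous theorem} as a direct corollary of Theorem~\ref{Affine theorem} by specialising to the diagonal case $\beta_2=\beta_3$. First I would invoke Theorem~\ref{Affine theorem} with the given $\beta_1\in(1,\frac{1+\sqrt{5}}{2})$ to obtain the constant $c=c(\beta_1)>0$ supplied there, and set $C:=c$. For any $\beta_2\in(1,1+C)$, applying Theorem~\ref{Affine theorem} with the choice $\beta_3=\beta_2$ yields that $\Lambda_{\beta_1,\beta_2,\beta_2}=\Lambda_{\beta_1,\beta_2}$ has non-empty interior and, more precisely, that $(0,0)\in\Lambda_{\beta_1,\beta_2}^{o}$.

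The key step is then to translate the interior point $(0,0)$ into the existence of simultaneous expansions, using the identification already recorded in the excerpt. When $\beta_2=\beta_3$ we have
\begin{equation*}
\Lambda_{\beta_1,\beta_2}=\Big\{\Big(\sum_{i=1}^{\infty}\frac{\epsilon_i}{\beta_1^i},\sum_{i=1}^{\infty}\frac{\epsilon_i}{\beta_2^i}\Big):(\epsilon_i)\in\{-1,1\}^{\mathbb{N}}\Big\},
\end{equation*}
so a point $(x,x)$ on the diagonal lies in $\Lambda_{\beta_1,\beta_2}$ precisely when $x$ admits a simultaneous $(\beta_1,\beta_2)$-expansion. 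Since $(0,0)$ is an interior point of $\Lambda_{\beta_1,\beta_2}$, there is an open ball $B\subseteq\Lambda_{\beta_1,\beta_2}$ centred at the origin; choosing $c'=c'(\beta_1,\beta_2)>0$ small enough that the diagonal segment $\{(x,x):|x|<c'\}$ is contained in $B$, every such $(x,x)$ lies in $\Lambda_{\beta_1,\beta_2}$, whence every $x$ with $|x|<c'$ has a simultaneous $(\beta_1,\beta_2)$-expansion.

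There is essentially no obstacle here beyond carefully matching the quantifiers: the real content is contained in Theorem~\ref{Affine theorem}, and the only thing to verify is that the diagonal $\{(x,x):x\in\mathbb{R}\}$ meets the interior of $\Lambda_{\beta_1,\beta_2}$ in a neighbourhood of the origin, which is immediate once $(0,0)\in\Lambda_{\beta_1,\beta_2}^{o}$. I would therefore present the argument in a few lines, noting that $\beta_2=\beta_3$ forces $\Lambda_{\beta_1,\beta_2,\beta_3}=\Lambda_{\beta_1,\beta_2}$ and that the simultaneous-expansion reformulation of interior points on the diagonal completes the proof.
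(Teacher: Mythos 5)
Your proposal is correct and is essentially the paper's own argument: the paper derives Theorem \ref{simultaneous theorem} as an immediate consequence of Theorem \ref{Affine theorem} with $\beta_2=\beta_3$, combined with the previously recorded identification $\{(x,x):x \textrm{ has a simultaneous }(\beta_1,\beta_2)\textrm{ expansion}\}=\Lambda_{\beta_1,\beta_2}\cap\{(x,x):x\in\mathbb{R}\}$ and the fact that $(0,0)\in\Lambda_{\beta_1,\beta_2}^{o}$. Your extra care in choosing the diagonal segment inside an open ball about the origin matches exactly what the paper leaves implicit.
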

Before moving onto our proofs we say a few words about the methods used in this paper and compare them with those used in \cite{DKK} and \cite{HS}. In these papers the authors show that $(0,0)\in\Lambda_{\b_1,\b_2}^{o}$ by constructing a polynomial $P(x)=x^n+b_{n-1}x^{n-1}+b_1x+b_0$ which satisfies:
\begin{enumerate}
  \item $P(\beta_1)=P(\beta_2)=0$
  \item $\sum_{j=0}^{n-1}|b_j|\leq 2$
  \item $b_1=0$
  \item $b_0\neq 0.$
\end{enumerate}Once the existence of this polynomial is established, one can devise an algorithm which can be applied to any $x_1,x_2$ sufficiently small, this algorithm then yields an $(\epsilon_i)\in\{-1,1\}^{\mathbb{N}}$ such that $(x_1,x_2)=(\sum_{i=1}^{\infty}\epsilon_i \beta_1^{-i},\sum_{i=1}^{\infty}\epsilon_i \beta_2^{-i}).$

This approach is somewhat unsatisfactory. The existence of the polynomial and the algorithm used to construct the $(\epsilon_i)$ provide little intuition as to why $(0,0)$ should be in the interior of $\Lambda_{\b_1,\b_2}$. Our approach, as well as allowing for different rates of contraction in the vertical direction, is more intuitive and explicitly constructs the interval appearing in each fibre of $\Lambda_{\beta_1,\b_2,\b_3}$.

The rest of this paper is arranged as follows. In Section \ref{Section3} we recall and prove some technical results that are required to prove our theorems. In Section \ref{digit frequencies} we prove our theorems relating to digit frequencies. In Section \ref{affine section} we prove Theorem \ref{Affine theorem}. In Section \ref{Section Explicit calculations} we include an example where we explicitly calculate some of the parameters appearing in our theorems. In Section \ref{remarks} we include some general discussion and pose some questions.

\section{Preliminaries}
\label{Section3}
In this section we prove some useful technical results and recall some background material.  Let us start by introducing the maps $T_{-1}(x)=\beta x+1$, $T_0(x)=\beta x$ and $T_{1}(x)=\beta x -1$. Given an $x\in I_{\beta}$ we let $$\Sigma_{\beta}(x):=\Big\{(\epsilon_i)\in\{0,1\}^{\mathbb{N}}:\sum_{i=1}^{\infty}\frac{\epsilon_i}{\beta^i}=x\Big\}$$ and $$\Omega_{\beta}(x):=\Big\{(a_i)\in\{T_0,T_1\}^{\mathbb{N}}:(a_n\circ\cdots \circ a_1)(x)\in I_{\beta} \textrm{ for all }n\in\mathbb{N}\Big\}.$$ Similarly, given $x\in \widetilde{I}_{\beta}:=[\frac{-1}{\beta-1},\frac{1}{\beta-1}]$ let $$\widetilde{\Sigma}_{\beta}(x):=\Big\{(\epsilon_i)\in\{-1,1\}^{\mathbb{N}}:\sum_{i=1}^{\infty}\frac{\epsilon_i}{\beta^i}=x\Big\}$$ and $$\widetilde{\Omega}_{\beta}(x):=\Big\{(a_i)\in\{T_{-1},T_1\}^{\mathbb{N}}:(a_n\circ\cdots \circ a_1)(x)\in \widetilde{I}_{\beta} \textrm{ for all }n\in\mathbb{N}\Big\}.$$  The dynamical interpretation of $\beta$-expansions is best seen through the following result.

\begin{lemma}
\label{Bijection lemma}
For any $x\in I_{\beta}$($x\in\widetilde{I}_{\beta}$) we have $\textrm{Card }\Sigma_{\beta}(x)=\textrm{Card }\Omega_{\beta}(x)$($\textrm{Card }\widetilde{\Sigma}_{\beta}(x)=\textrm{Card }\widetilde{\Omega}_{\beta}(x)).$ Moreover, the map which sends $(\epsilon_i)$ to $(T_{\epsilon_i})$ is a bijection between $\Sigma_{\beta}(x)$ and $\Omega_{\beta}(x)$($\widetilde{\Sigma}_{\beta}(x)$ and $\widetilde{\Omega}_{\beta}(x)$).
\end{lemma}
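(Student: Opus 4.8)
The plan is to reduce both assertions to a single algebraic identity describing the orbit of $x$ under compositions of the maps $T_{\epsilon_i}$. Since the assignment $(\epsilon_i)\mapsto(T_{\epsilon_i})$ is manifestly a bijection from $\{0,1\}^{\mathbb{N}}$ onto $\{T_0,T_1\}^{\mathbb{N}}$ (and from $\{-1,1\}^{\mathbb{N}}$ onto $\{T_{-1},T_1\}^{\mathbb{N}}$ in the tilde case), the only genuine content is that this bijection carries $\Sigma_{\beta}(x)$ exactly onto $\Omega_{\beta}(x)$. Equivalently, I need the membership equivalence
$$\sum_{i=1}^{\infty}\frac{\epsilon_i}{\beta^i}=x \iff (T_{\epsilon_n}\circ\cdots\circ T_{\epsilon_1})(x)\in I_{\beta} \textrm{ for all } n\in\mathbb{N}.$$
Both the untilded and tilded statements then follow at once, since $T_{\epsilon}(x)=\beta x-\epsilon$ in either digit set and only the relevant interval changes.

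First I would establish by induction on $n$ the formula
$$(T_{\epsilon_n}\circ\cdots\circ T_{\epsilon_1})(x)=\beta^n x-\sum_{i=1}^{n}\epsilon_i\beta^{n-i},$$
using the base case $T_{\epsilon}(x)=\beta x-\epsilon$ together with the inductive step $\beta\big(\beta^{n-1}x-\sum_{i=1}^{n-1}\epsilon_i\beta^{n-1-i}\big)-\epsilon_n$. Dividing through by $\beta^n$ recasts this as $\beta^{-n}(T_{\epsilon_n}\circ\cdots\circ T_{\epsilon_1})(x)=x-\sum_{i=1}^{n}\epsilon_i\beta^{-i}$, which is the workhorse identity for both directions.

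For the forward implication I would assume $\sum_{i=1}^{\infty}\epsilon_i\beta^{-i}=x$, so that the right-hand side above equals the tail $\sum_{i=n+1}^{\infty}\epsilon_i\beta^{-i}$. The digit bound ($\epsilon_i\in\{0,1\}$, respectively $\{-1,1\}$) forces this tail to lie in $[0,\beta^{-n}(\beta-1)^{-1}]$ (respectively $[-\beta^{-n}(\beta-1)^{-1},\beta^{-n}(\beta-1)^{-1}]$); multiplying by $\beta^n$ places $(T_{\epsilon_n}\circ\cdots\circ T_{\epsilon_1})(x)$ in $I_{\beta}$ (respectively $\widetilde{I}_{\beta}$) for every $n$, that is, $(T_{\epsilon_i})\in\Omega_{\beta}(x)$. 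For the reverse implication I would assume $(T_{\epsilon_n}\circ\cdots\circ T_{\epsilon_1})(x)\in I_{\beta}$ for all $n$; since this orbit is bounded, dividing by $\beta^n\to\infty$ gives $x-\sum_{i=1}^{n}\epsilon_i\beta^{-i}\to 0$, whence $\sum_{i=1}^{\infty}\epsilon_i\beta^{-i}=x$ and $(\epsilon_i)\in\Sigma_{\beta}(x)$.

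Combining the two implications shows the bijection $(\epsilon_i)\mapsto(T_{\epsilon_i})$ restricts to a bijection $\Sigma_{\beta}(x)\to\Omega_{\beta}(x)$, which yields the equality of cardinalities; the tilde case runs word for word after replacing $I_{\beta}$ by $\widetilde{I}_{\beta}$ and the digit set by $\{-1,1\}$. There is no substantial obstacle here, as the lemma is essentially a bookkeeping exercise. The one point demanding care is that the orbit-membership condition be verified for \emph{every} $n$ in the forward direction, not merely in the limit, which is precisely what the uniform tail bound supplies.
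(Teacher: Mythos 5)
Your proof is correct, and it is essentially the standard argument: the paper itself does not prove this lemma but defers to the reference \cite{BakG}, where the same telescoping identity $(T_{\epsilon_n}\circ\cdots\circ T_{\epsilon_1})(x)=\beta^n\big(x-\sum_{i=1}^{n}\epsilon_i\beta^{-i}\big)$ and the tail-bound/boundedness dichotomy drive the proof. Your write-up correctly supplies both implications (uniform tail estimate for the forward direction, division by $\beta^n$ for the reverse) and the restriction-of-a-bijection observation, so there is nothing to add.
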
Lemma \ref{Bijection lemma} was originally proved in \cite{BakG} for an arbitrary digit set of the form $\{0,\ldots,m\}$. The proof easily extends to the digit set $\{-1,1\}$.

Lemma \ref{Bijection lemma} allows us to reinterpret problems from $\beta$-expansions in terms of the allowable trajectories that can occur within a dynamical system. In Figure \ref{fig1} we include a graph of $T_{0}$ and $T_{1}$ acting on $I_{\beta}$. One can see from this picture, or check by hand, that if $x\in [\frac{1}{\beta},\frac{1}{\beta(\beta-1)}]$ then both $T_0$ and $T_1$ map $x$ into $I_{\beta}$. Therefore, by Lemma \ref{Bijection lemma}, this $x$ has at least two $\beta$-expansions. More generally, if there exists a sequence of $T_0$'s and $T_1$'s that map $x$ into $[\frac{1}{\beta},\frac{1}{\beta(\beta-1)}],$ then $x$ has at least two $\beta$-expansions.

The interval $[\frac{1}{\beta},\frac{1}{\beta(\beta-1)}]$ is clearly important when it comes to studying  $\Sigma_{\beta}(x)$ and $\Omega_{\beta}(x)$. In what follows we let $$\mathcal{S}_{\beta}:=\Big[\frac{1}{\beta},\frac{1}{\beta(\beta-1)}\Big].$$






\begin{figure}
\centering
\begin{tikzpicture}[x=2.1,y=2.1]
\path[draw](0,10) -- (0,160) -- (150,160) -- (150,10) -- (0,10);
\path[draw][thick](0,10) -- (100,160);
\path[draw][thick](50,10) -- (150,160);
\path[draw](50,8) -- (50,12);
\path[draw](100,8) -- (100,12);
\draw (0,10) node[below] {$0$};
\draw (50,7) node[below] {$\frac{1}{\beta}$};
\draw (100,7) node[below] {$\frac{1}{\beta(\beta-1)}$};
\draw (150,10) node[below] {$\frac{1}{\beta-1}$};
\end{tikzpicture}
\caption{The overlapping graphs of $T_0$ and $T_1$.}
 \label{fig1}

\end{figure}
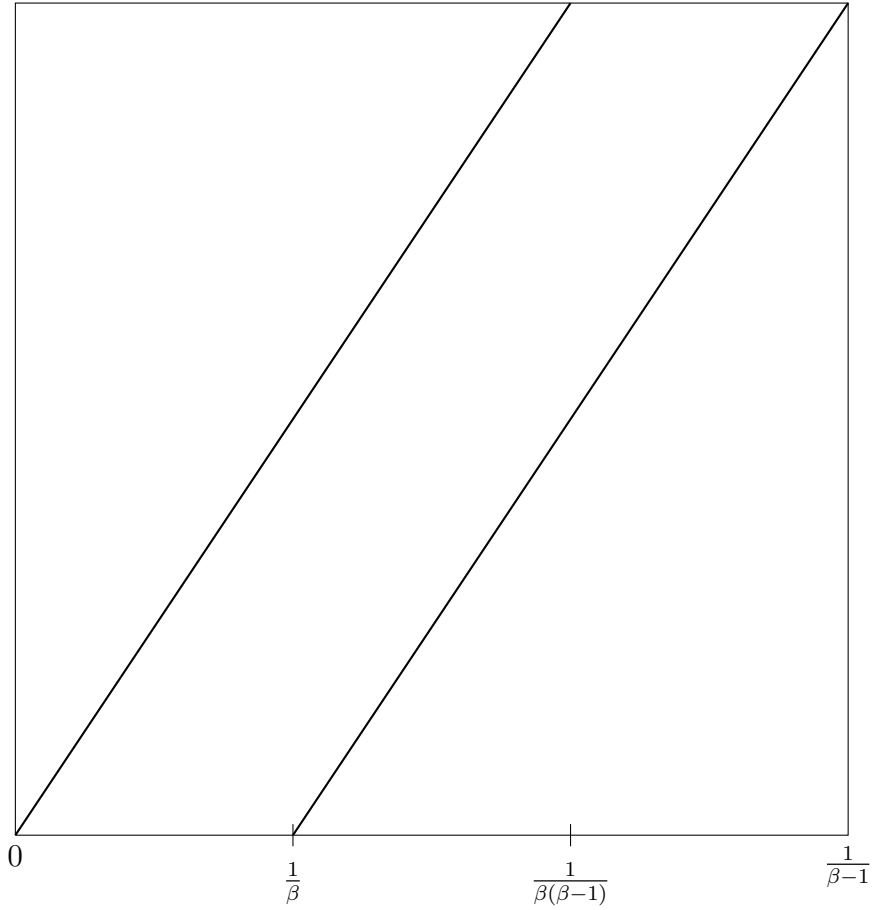

Another particularly useful interval for studying $\beta$-expansions is $$\mathcal{O}_{\beta}:=\Big[\frac{1}{\beta^2-1},\frac{\beta}{\beta^2-1}\Big].$$ The analogues of $\mathcal{S}_{\beta}$ and $\mathcal{O}_{\beta}$ for the digit set $\{-1,1\}$ are $$\widetilde{\mathcal{S}}_{\beta}:=\Big[\frac{\beta-2}{\beta(\beta-1)},\frac{2-\beta}{\beta(\beta-1)}\Big] \textrm{ and } \widetilde{\mathcal{O}}_{\beta}:=\Big[\frac{1-\beta}{\beta^2-1},\frac{\beta-1}{\beta^2-1}\Big].$$ The intervals $\mathcal{O}_{\beta}$ and $\widetilde{\mathcal{O}}_{\beta}$ are important because of the following lemma.
\begin{lemma}
\label{basic lemma}
For any $\beta\in(1,2)$ we have
\begin{equation}
\label{dumb1}
T_{0}\Big(\frac{1}{\beta^2-1}\Big)=\frac{\beta}{\beta^2-1}\textrm{ and }T_1\Big(\frac{\beta}{\beta^2-1}\Big)=\frac{1}{\beta^2-1},
\end{equation}
and
\begin{equation}
\label{dumb2}
T_{-1}\Big(\frac{1-\beta}{\beta^2-1}\Big)=\frac{\beta-1}{\beta^2-1}\textrm{ and }T_1\Big(\frac{\beta-1}{\beta^2-1}\Big)=\frac{1-\beta}{\beta^2-1}.
 \end{equation}Moreover, for any $x\in(0,\frac{1}{\beta-1})$ ($x\in(\frac{-1}{\beta-1},\frac{1}{\beta-1})$) there exists a sequence of $T_{0}$'s or $T_1$'s($T_{-1}$'s or $T_1$'s) that map $x$ into $\mathcal{O}_{\beta}$($\widetilde{\mathcal{O}}_{\beta}$).
\end{lemma}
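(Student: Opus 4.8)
The plan is to dispatch the two displayed identities by direct substitution, and then to prove the ``moreover'' clause, which is the real content, by a covering argument driven by the expanding dynamics. For \eqref{dumb1} and \eqref{dumb2} I would simply substitute: from $T_0(x)=\beta x$ one gets $T_0(\tfrac{1}{\beta^2-1})=\tfrac{\beta}{\beta^2-1}$, from $T_1(x)=\beta x-1$ one gets $T_1(\tfrac{\beta}{\beta^2-1})=\tfrac{\beta^2}{\beta^2-1}-1=\tfrac{1}{\beta^2-1}$, and the two identities in \eqref{dumb2} follow in the same way from $T_{-1}(x)=\beta x+1$ and $T_1(x)=\beta x-1$. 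Although routine, these are worth recording, since they are exactly what makes the covering argument work: writing $p=\tfrac{1}{\beta^2-1}$ and $q=\tfrac{\beta}{\beta^2-1}$ for the endpoints of $\mathcal{O}_\beta$, they say $T_0(p)=q$, $T_1(q)=p$, and in particular $q=\beta p$.

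For the moreover clause in the $\{0,1\}$ setting, I would first exploit the reflection $R(x)=\tfrac{1}{\beta-1}-x$, which is an involution satisfying $R\circ T_0=T_1\circ R$ and mapping $\mathcal{O}_\beta$ onto itself (indeed $R(p)=q$). This reduces everything to the case $x\in(0,p)$: a point $x\in(q,\tfrac{1}{\beta-1})$ has $R(x)\in(0,p)$, and applying $T_1$ to $x$ the same number of times that $T_0$ drives $R(x)$ into $\mathcal{O}_\beta$ lands $x$ in $\mathcal{O}_\beta$ by conjugacy. For $x\in(0,p)$ I would iterate $T_0$. Since $T_0$ multiplies by $\beta$ and fixes $0$, the half-open intervals $[\tfrac{p}{\beta^{k+1}},\tfrac{p}{\beta^k})$, $k\geq 0$, partition $(0,p)$, so $x$ lies in exactly one of them; applying $T_0$ precisely $k+1$ times then gives $\beta^{k+1}x\in[p,\beta p)=[p,q)\subseteq\mathcal{O}_\beta$, where the equality $\beta p=q$ is \eqref{dumb1}. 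Points already in $\mathcal{O}_\beta=[p,q]$ need no iteration.

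The $\{-1,1\}$ case is identical once ``distance from $0$'' is replaced by ``distance from the fixed point.'' Here $T_1$ fixes $\tfrac{1}{\beta-1}$ and satisfies $\tfrac{1}{\beta-1}-T_1(x)=\beta(\tfrac{1}{\beta-1}-x)$, and the sign flip $x\mapsto -x$ conjugates $T_1$ and $T_{-1}$ while preserving $\widetilde{\mathcal{O}}_\beta$. Measuring $x\in(\tfrac{-1}{\beta-1},\tfrac{1}{\beta-1})$ by $d=\tfrac{1}{\beta-1}-x$, the interval $\widetilde{\mathcal{O}}_\beta$ corresponds to $d\in[\tfrac{2}{\beta^2-1},\tfrac{2\beta}{\beta^2-1}]$, whose endpoints again differ by the factor $\beta$, and $T_1$ acts by $d\mapsto\beta d$; so the same partition-and-iterate argument applies, with $x\mapsto -x$ reducing the two sides of $0$ to a single case handled by $T_1$ (the other by $T_{-1}$).

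The only point requiring care — and the heart of the lemma — is checking that iterating the expanding map cannot jump over $\mathcal{O}_\beta$ (resp. $\widetilde{\mathcal{O}}_\beta$). This is guaranteed precisely by \eqref{dumb1} (resp. \eqref{dumb2}): because the two endpoints of the target interval are in ratio $\beta$, the multiplicative step size of the dynamics matches the length scale of the target, so two consecutive iterates cannot straddle it. I would also emphasise that the hypothesis $x\in(0,\tfrac{1}{\beta-1})$ (resp. $x\in(\tfrac{-1}{\beta-1},\tfrac{1}{\beta-1})$) is genuinely necessary and sharp: the excluded endpoints are exactly the fixed points of $T_0$ and $T_1$ (resp. $T_{-1}$ and $T_1$), which no sequence of maps can ever move.
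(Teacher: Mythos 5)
Your proposal is correct and takes essentially the same approach as the paper: verify \eqref{dumb1} and \eqref{dumb2} by direct substitution, then deduce the ``moreover'' clause from the expansivity of the maps away from their fixed points (the endpoints of $I_{\beta}$, resp.\ $\widetilde{I}_{\beta}$) combined with the observation that these identities make it impossible for an iterate to jump over $\mathcal{O}_{\beta}$, resp.\ $\widetilde{\mathcal{O}}_{\beta}$. The paper's own proof is just a terser version of this; your explicit multiplicative partition $[\frac{p}{\beta^{k+1}},\frac{p}{\beta^{k}})$ and the reflection/sign-flip reductions are useful presentational refinements of the same mechanism rather than a different method.
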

\begin{proof}
Verifying \eqref{dumb1} and \eqref{dumb2} is a simple calculation. These equations tell us that it is not possible for an $x$ to be mapped over $\mathcal{O}_{\beta}$ or $\widetilde{\mathcal{O}}_{\beta}$ via an application of one of our maps. Note that the endpoints of the interval $I_{\beta}$ are the fixed points of the maps $T_0$ and $T_1$. Similarly the endpoints of the interval $\widetilde{I}_{\beta}$ are the fixed points of the maps $T_{-1}$ and $T_1$. Combining these observations with the expansivity of our maps implies the second half of our lemma.
\end{proof}

Several of our theorems will rely on the following proposition. Loosely speaking, it states that for $\beta\in(1,\frac{1+\sqrt{5}}{2})$, for any $x\in \mathcal{O}_{\beta}$ ($x\in \widetilde{\mathcal{O}}_{\beta})$ there exists a method of generating expansions of $x$ such that we have a lot of control over the digits that appear. Before we state this result it is useful to introduce some notation.

In what follows we let $\{T_0,T_1\}^{*}=\cup _{n=0}^{\infty}\{T_0,T_1\}^{n}$. Given $\omega=(\omega_1,\ldots,\omega_n)\in\{T_0,T_1\}^{*}$ let $\omega(x)=(\omega_n\circ\cdots\circ\omega_1)(x)$. We let $|\omega|$ denote the length of $\omega$. We also let $$|\omega|_0=\#\{1\leq i \leq |\omega|:\omega_i=T_0\}$$ and $$|\omega|_1=\#\{1\leq i \leq |\omega|:\omega_i=T_1\}.$$ For a finite word $\omega\in\{T_{0},T_{1}\}^{*}$ we denote by $\omega^k$ its $k$-fold concatenation with itself and by $\omega^{\infty}$ the infinite sequence obtained by concatenating $\omega$ indefinitely. The above notions translate over in the obvious way to sequences of maps whose components are from the set $\{T_{-1},T_1\}$. We also define $|\cdot|_{-1}$ in the obvious way.

\begin{proposition}
\label{important prop}
Let $\beta\in(1,\frac{1+\sqrt{5}}{2}).$ There exist $n(\beta)\in\mathbb{N}$ such that if $x\in \mathcal{O}_\beta$($x\in \widetilde{\mathcal{O}}_{\beta}$) then there exists $\omega^0,\omega^1\in \{T_0,T_1\}^{*}$($\omega^{-1},\omega^1\in \{T_{-1},T_1\}^{*}$) satisfying the following:
\begin{itemize}
\item $|\omega^0|\leq n(\beta)$ and $|\omega^1|\leq n(\beta)$\,($|\omega^{-1}|\leq n(\beta)$ and $|\omega^1|\leq n(\beta)$).
  \item $\omega^{0}(x)\in \mathcal{O}_{\beta}$ and $\omega^{1}(x)\in \mathcal{O}_\beta$\,($\omega^{-1}(x)\in \widetilde{\mathcal{O}}_{\beta}$ and $\omega^{1}(x)\in \widetilde{\mathcal{O}}_\beta$)
  \item $|\omega^{0}|_{0}>|\omega^{0}|_{1}$\,($|\omega^{-1}|_{-1}>|\omega^{-1}|_{1}$)
  \item $|\omega^{1}|_{1}>|\omega^{1}|_{0}$\,($|\omega^{1}|_{1}>|\omega^{1}|_{-1}$)
\end{itemize}
\end{proposition}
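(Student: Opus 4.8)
The plan is to prove the statement for the digit set $\{T_0,T_1\}$; the case of $\{T_{-1},T_1\}$ is entirely analogous, with $\mathcal{O}_{\beta}$ and $T_0$ replaced by $\widetilde{\mathcal{O}}_{\beta}$ and $T_{-1}$ throughout. First I would record the two places where the hypothesis $\beta<\frac{1+\sqrt{5}}{2}$ is decisive. Writing $L:=\frac{1}{\beta^2-1}$ and $R:=\frac{\beta}{\beta^2-1}$ for the endpoints of $\mathcal{O}_{\beta}$, a direct computation shows that for $\beta$ in this range one has $\mathcal{O}_{\beta}\subseteq \mathcal{S}_{\beta}$ and $R>1$. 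The inclusion guarantees that both $T_0$ and $T_1$ are defined at every point of $\mathcal{O}_{\beta}$ and keep it inside $I_{\beta}$, so we have genuine freedom of choice at each step; the inequality $R>1$ will keep the points we produce bounded away from the fixed point $0$ of $T_0$. Both facts degenerate exactly at the golden ratio.

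Next I would dispose of the second word by symmetry, so that only $\omega^0$ needs to be constructed. The reflection $\sigma(x)=\frac{1}{\beta-1}-x$ fixes the common centre $\frac{1}{2(\beta-1)}$ of $I_{\beta}$ and $\mathcal{O}_{\beta}$, preserves $\mathcal{O}_{\beta}$, and conjugates the two maps, $\sigma\circ T_0\circ\sigma=T_1$. Hence if for every $x\in\mathcal{O}_{\beta}$ I can produce an admissible $\omega^0$, then the word obtained from $\omega^0$ for $\sigma(x)$ by interchanging $T_0$ and $T_1$ sends $x$ back into $\sigma(\mathcal{O}_{\beta})=\mathcal{O}_{\beta}$ and has strictly more $T_1$'s than $T_0$'s, i.e. it is an admissible $\omega^1$ for $x$. (In the $\{-1,1\}$ setting the relevant symmetry is $x\mapsto -x$, which preserves $\widetilde{\mathcal{O}}_{\beta}$ and swaps $T_{-1}$ and $T_1$.)

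For the construction of $\omega^0$ the key observation is that $T_1(x)=\beta x-1$ maps $\mathcal{O}_{\beta}=[L,R]$ onto $[R-1,L]$, an interval lying strictly to the left of $\mathcal{O}_{\beta}$ but, since $R>1$, contained in $(0,L]$ and therefore bounded away from $0$. Given $x\in\mathcal{O}_{\beta}$ I would apply $T_1$ once or twice to reach a launch point $y=T_1^{m}(x)$, $m\in\{1,2\}$, in this zone, and then apply $T_0$ exactly $j$ times, where $j$ is the unique integer with $\beta^{j}y\in[L,\beta L)=[L,R)$; such a $j$ exists and is unique because $\mathcal{O}_{\beta}$ has multiplicative width $R/L=\beta$, and the whole orbit then stays inside $(0,R)\subseteq I_{\beta}$. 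Because $y$ is bounded below by a constant depending only on $\beta$, the climbing length $j$ is bounded, which yields the uniform bound $n(\beta)\ge m+j$. The number $m$ of initial $T_1$'s is chosen so that $j>m$: a point near the left of $\mathcal{O}_{\beta}$ is sent by one $T_1$ deep into the zone and needs many $T_0$'s to climb back, whereas a point near the right endpoint has $T_1(x)$ close to $L$ (recall $T_1(R)=L$) and would climb back in too few steps, so for such $x$ I apply a second $T_1$, which, since $T_1(L)=R-1$, drops the launch point to the bottom of the zone and forces $j\ge 3$.

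The main obstacle is precisely this bookkeeping of the strict inequality $j>m$ uniformly in $x$, together with the length bound. One must locate the threshold separating ``one $T_1$ suffices'' from ``two $T_1$'s are needed'' so that in each regime the subsequent $T_0$-climb strictly outnumbers the $T_1$'s used, and simultaneously so that the second $T_1$ is applied only to points for which $T_1^{2}(x)$ remains non-negative; this last point is again where $R-1>0$, that is $\beta<\frac{1+\sqrt{5}}{2}$, is used. This is exactly the step that fails at the golden ratio: once $\beta\ge\frac{1+\sqrt{5}}{2}$ the inclusion $\mathcal{O}_{\beta}\subseteq\mathcal{S}_{\beta}$ and the inequality $R>1$ break down, the launch zone collides with $0$, and the climb length can no longer be controlled, consistent with the fact, noted later in the paper, that for such $\beta$ some fibres are countable.
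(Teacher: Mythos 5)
Your set-up is sound and close in spirit to the paper's: the reflection $x\mapsto\frac{1}{\beta-1}-x$ reduces everything to the construction of $\omega^{0}$ (the paper does exactly this), and your two preliminary facts --- $\mathcal{O}_{\beta}\subseteq\mathcal{S}_{\beta}$ and $R:=\frac{\beta}{\beta^{2}-1}>1$, both degenerating at the golden ratio --- are correct, as is the description of the launch zone $T_1(\mathcal{O}_{\beta})=[R-1,L]$, where $L:=\frac{1}{\beta^{2}-1}$. The gap is the final, decisive claim that $m\in\{1,2\}$ initial applications of $T_1$ always yield a climb of length $j>m$. From the bottom of the zone the climb has length at least $3$ precisely when $\beta^{2}(R-1)<L$, and this inequality simplifies to $(\beta-1)(\beta^{3}-\beta-1)>0$, i.e.\ it holds only for $\beta$ above the plastic number $\rho\approx 1.3247$ (the real root of $x^{3}=x+1$). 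For $\beta\in(1,\rho]$ your construction fails at and near the right endpoint of $\mathcal{O}_{\beta}$: take $\beta=1.2$ and $x=R$. One $T_1$ lands exactly at $L$ (so $j=0$; for $x$ slightly below $R$ one gets $j=1$), while two $T_1$'s land at $R-1$, and since $\beta(R-1)<L\leq\beta^{2}(R-1)$ the climb is exactly $j=2$; in every case the word $T_1^{m}T_0^{j}$ satisfies $|\omega|_{0}\leq|\omega|_{1}$, never the strict inequality. Worse, the number of $T_1$'s needed before a strict surplus appears is unbounded as $\beta\to 1$: at $\beta=1.05$, starting from $x=R$, one finds $j=m$ (exactly balanced) for $m=2,3,4$, and only $m=5$ gives $j=6>5$. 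So no threshold of the kind you describe exists; and if you let $m$ grow with $\beta$ you incur a new obligation --- showing a strict surplus is reached before the orbit $T_1^{m}(x)$ exits $[0,\frac{1}{\beta-1}]$ --- which is essentially the whole content of the proposition. (Note also that your failure analysis points at the wrong regime: your construction breaks for $\beta$ near $1$, not near the golden ratio.)

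The paper's proof is engineered around exactly this obstruction. Rather than spending all the $T_1$'s first, it alternates rounds: apply $T_1$, then apply $T_0$ until the orbit returns to $\mathcal{O}_{\beta}$; a round needing only one $T_0$ is ``balanced'' and the algorithm continues. The quantitative input is that $T_0\circ T_1$ fixes $R$ and expands distances from $R$ by the factor $\beta^{2}$, so each balanced round pushes the orbit geometrically away from $R$; once an initial gap $\delta(\beta)>0$ from $R$ is secured, at most $n_{2}(\beta)$ balanced rounds can occur before some round requires at least two $T_0$'s, and that single extra $T_0$ produces $|\omega^{0}|_{0}>|\omega^{0}|_{1}$. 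Securing the initial gap is precisely where your problematic points near $R$ reappear: there (Case 2 of the paper, $T_1(x)\in\mathcal{I}_{\beta}$) the paper applies $T_1$ twice and uses the identity $(T_0^{2}\circ T_1^{2})(R)=\frac{\beta^{3}+\beta^{2}-\beta^{4}}{\beta^{2}-1}<R$, valid for all $\beta\in(1,2)$, to manufacture a gap $\delta'(\beta)>0$ even when that round comes back balanced, after which the iterative argument takes over. As written, your argument proves the proposition only for $\beta>\rho$; to cover $(1,\rho]$ you need this iterative expansion mechanism or a substitute for it.
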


Let us take the opportunity to emphasise that $\omega^{-1}$ is not an inverse map.

We will only give a proof of Proposition \ref{important prop} for the digit set $\{0,1\}$. The case where the digit set is $\{-1,1\}$ is dealt with similarly. Before giving a proof of Proposition \ref{important prop} for the digit set $\{0,1\}$ it is useful to define two more intervals and state some basic facts. For any $\beta\in(1,2)$ let:
$$\mathcal{I}_{\beta}:=\Big[\frac{1}{2}\Big(\frac{1}{\beta}+\frac{1}{\beta^2-1}\Big),\frac{1}{2}\Big(\frac{\beta}{\beta^2-1}+\frac{1}{\beta(\beta-1)}\Big)\Big],$$
and
\begin{align*}
\mathcal{J}_{\beta}:&=\Big[T_{1}\Big(\frac{1}{2}\Big(\frac{1}{\beta}+\frac{1}{\beta^2-1}\Big)\Big),T_{0}\Big(\frac{1}{2}\Big(\frac{\beta}{\beta^2-1}+\frac{1}{\beta(\beta-1)}\Big)\Big)\Big]\\
&=\Big[\frac{\beta}{2}\Big(\frac{1}{\beta}+\frac{1}{\beta^2-1}\Big)-1,\frac{\beta}{2}\Big(\frac{\beta}{\beta^2-1}+\frac{1}{\beta(\beta-1)}\Big)\Big].\\
\end{align*} For any $\beta\in(1,2)$ these intervals are well defined and nontrivial. Note that the left endpoint of the interval of $\mathcal{I}_{\beta}$ is the midpoint of the left endpoints of $\mathcal{S}_{\beta}$ and $\mathcal{O}_{\beta},$ and the right endpoint of $\mathcal{I}_{\beta}$ is the midpoint of the right endpoints of $\mathcal{S}_{\beta}$ and $\mathcal{O}_{\beta}$

\begin{lemma}
\label{basic lemma2}
For any $\beta\in(1,\frac{1+\sqrt{5}}{2})$ we have $\mathcal{O}_{\beta}\subsetneq \mathcal{I}_{\beta}\subsetneq \mathcal{S}_{\beta}$ and $\mathcal{J}_{\beta}\subseteq (0,\frac{1}{\beta-1})$.
\end{lemma}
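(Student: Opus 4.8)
The plan is to reduce every claim in the lemma to the single polynomial inequality $\beta^2-\beta-1<0$, which holds precisely on $(1,\frac{1+\sqrt{5}}{2})$ since the roots of $\beta^2-\beta-1$ are $\frac{1\pm\sqrt{5}}{2}$. For the chain of inclusions $\mathcal{O}_{\beta}\subsetneq\mathcal{I}_{\beta}\subsetneq\mathcal{S}_{\beta}$ I would exploit the observation recorded just before the lemma: each endpoint of $\mathcal{I}_{\beta}$ is the midpoint of the corresponding endpoints of $\mathcal{O}_{\beta}$ and $\mathcal{S}_{\beta}$. Consequently the whole chain collapses to verifying that $\mathcal{O}_{\beta}$ sits strictly inside $\mathcal{S}_{\beta}$ at both ends; once this is known, a midpoint automatically lies strictly between the two values it averages, delivering both inclusions simultaneously.

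Concretely, writing the left endpoints of $\mathcal{S}_{\beta}$ and $\mathcal{O}_{\beta}$ as $\frac{1}{\beta}$ and $\frac{1}{\beta^2-1}$, the inequality $\frac{1}{\beta}<\frac{1}{\beta^2-1}$ is equivalent to $\beta^2-1<\beta$, i.e.\ $\beta^2-\beta-1<0$. Since the left endpoint of $\mathcal{I}_{\beta}$ is the average $\frac{1}{2}(\frac{1}{\beta}+\frac{1}{\beta^2-1})$, this single inequality forces $\frac{1}{\beta}$ to be strictly below, and $\frac{1}{\beta^2-1}$ strictly above, the left endpoint of $\mathcal{I}_{\beta}$. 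On the right, comparing $\frac{\beta}{\beta^2-1}$ with $\frac{1}{\beta(\beta-1)}$ and cancelling the common positive factor $\frac{1}{\beta-1}$ reduces $\frac{\beta}{\beta^2-1}<\frac{1}{\beta(\beta-1)}$ to $\beta^2<\beta+1$, again the same condition; the midpoint argument then handles both inclusions on the right end, establishing $\mathcal{O}_{\beta}\subsetneq\mathcal{I}_{\beta}\subsetneq\mathcal{S}_{\beta}$.

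For $\mathcal{J}_{\beta}\subseteq(0,\frac{1}{\beta-1})$ I would check the two endpoints directly using $T_0(x)=\beta x$ and $T_1(x)=\beta x-1$. For the left endpoint, $T_1(\frac{1}{2}(\frac{1}{\beta}+\frac{1}{\beta^2-1}))>0$ rearranges to $\frac{\beta}{\beta^2-1}>1$, i.e.\ $\beta^2-\beta-1<0$. For the right endpoint, after writing $\beta^2-1=(\beta-1)(\beta+1)$ one finds $T_0(\frac{1}{2}(\frac{\beta}{\beta^2-1}+\frac{1}{\beta(\beta-1)}))=\frac{1}{2(\beta-1)}\cdot\frac{\beta^2+\beta+1}{\beta+1}$, and requiring this to be $<\frac{1}{\beta-1}$ is equivalent to $\beta^2+\beta+1<2\beta+2$, i.e.\ once more $\beta^2-\beta-1<0$.

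There is no serious obstacle here: every one of the four required strict inequalities is, after elementary manipulation, exactly $\beta^2-\beta-1<0$, so the real content of the lemma is the single observation that this is the defining inequality of $(1,\frac{1+\sqrt{5}}{2})$. The only point needing a little care is the bookkeeping that converts the midpoint observation into strict two-sided containment, but since the average of two distinct numbers lies strictly between them, this is immediate once the endpoint comparisons of $\mathcal{O}_{\beta}$ and $\mathcal{S}_{\beta}$ are in hand.
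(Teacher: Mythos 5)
Your proposal is correct and follows essentially the same route as the paper: the paper reduces the lemma to the two midpoint chains $\frac{1}{\beta}<\frac{1}{2}(\frac{1}{\beta}+\frac{1}{\beta^2-1})<\frac{1}{\beta^2-1}$ and $\frac{\beta}{\beta^2-1}<\frac{1}{2}(\frac{\beta}{\beta^2-1}+\frac{1}{\beta(\beta-1)})<\frac{1}{\beta(\beta-1)}$ and omits the ``simple calculation,'' which is exactly the endpoint bookkeeping you carry out. Your only addition is making explicit that every comparison collapses to $\beta^2-\beta-1<0$ and checking the endpoints of $\mathcal{J}_{\beta}$ by direct computation (rather than via monotonicity of $T_0,T_1$ applied to the chains), both of which are correct.
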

\begin{proof}
Lemma \ref{basic lemma2} will follow if we can prove that $$\frac{1}{\beta}<\frac{1}{2}\Big(\frac{1}{\beta}+\frac{1}{\beta^2-1}\Big)<\frac{1}{\beta^2-1}$$ and $$\frac{\beta}{\beta^2-1}<\frac{1}{2}\Big(\frac{\beta}{\beta^2-1}+\frac{1}{\beta(\beta-1)}\Big)<\frac{1}{\beta(\beta-1)}$$ for $\beta\in(1,\frac{1+\sqrt{5}}{2})$. Verifying these inequalities is a simple calculation and is omitted.
\end{proof}

\begin{lemma}
\label{basic lemma 2}
Let $\beta\in(1,\frac{1+\sqrt{5}}{2}).$ There exists $n_1(\beta)\in\mathbb{N}$ such that:
 \begin{itemize}
   \item If $$x\in\Big[\frac{\beta}{2}\Big(\frac{1}{\beta}+\frac{1}{\beta^2-1}\Big)-1,\frac{1}{\beta^2-1}\Big]$$ then $T_0^i(x)\in\mathcal{O}_{\beta}$ for some $1\leq i \leq n_{1}(\beta).$
   \item If $$x\in\Big[\frac{\beta}{\beta^2-1},\frac{\beta}{2}\Big(\frac{\beta}{\beta^2-1}+\frac{1}{\beta(\beta-1)}\Big)\Big]$$ then $T_1^i(x)\in\mathcal{O}_{\beta}$ for some $1\leq i \leq n_{1}(\beta).$
 \end{itemize}
\end{lemma}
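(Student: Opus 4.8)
The plan is to exploit the expansivity of the maps together with the two identities recorded in Lemma \ref{basic lemma}, namely $T_0\big(\frac{1}{\beta^2-1}\big)=\frac{\beta}{\beta^2-1}$ and $T_1\big(\frac{\beta}{\beta^2-1}\big)=\frac{1}{\beta^2-1}$, which say precisely that $T_0$ carries the left endpoint of $\mathcal{O}_{\beta}$ onto its right endpoint and $T_1$ carries the right endpoint back onto the left. First I would rewrite the two intervals in the statement in terms of the endpoints of $\mathcal{J}_{\beta}$ and $\mathcal{O}_{\beta}$. Unwinding the definition of $\mathcal{J}_{\beta}$, its left endpoint is exactly $\frac{\beta}{2}\big(\frac{1}{\beta}+\frac{1}{\beta^2-1}\big)-1$ and its right endpoint is exactly $\frac{\beta}{2}\big(\frac{\beta}{\beta^2-1}+\frac{1}{\beta(\beta-1)}\big)$, while the endpoints of $\mathcal{O}_{\beta}$ are $\frac{1}{\beta^2-1}$ and $\frac{\beta}{\beta^2-1}$. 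Thus the first interval is the gap between the left end of $\mathcal{J}_{\beta}$ and the left end of $\mathcal{O}_{\beta}$, and the second is the gap between the right end of $\mathcal{O}_{\beta}$ and the right end of $\mathcal{J}_{\beta}$. From Lemma \ref{basic lemma2} (the nesting $\mathcal{O}_{\beta}\subsetneq\mathcal{I}_{\beta}\subsetneq\mathcal{S}_{\beta}$ together with $\mathcal{J}_{\beta}\subseteq(0,\frac{1}{\beta-1})$) I would record that both intervals are nontrivial, that the left end of $\mathcal{J}_{\beta}$ is strictly positive, and that its right end is strictly less than $\frac{1}{\beta-1}$; this positivity is exactly where the hypothesis $\beta<\frac{1+\sqrt{5}}{2}$ is consumed.

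For the first bullet, fix $x$ in the first interval. Since $T_0(x)=\beta x$ and $x$ is at least the (positive) left endpoint of $\mathcal{J}_{\beta}$, the orbit $T_0^i(x)=\beta^i x$ is strictly increasing and tends to infinity, so there is a least $i\geq 1$ with $\beta^i x\geq\frac{1}{\beta^2-1}$. The crux is that this first crossing cannot overshoot $\mathcal{O}_{\beta}$: by minimality $\beta^{i-1}x<\frac{1}{\beta^2-1}$ when $i>1$, and when $i=1$ we use $x\leq\frac{1}{\beta^2-1}$, so in either case $\beta^i x=\beta\cdot\beta^{i-1}x<\beta\cdot\frac{1}{\beta^2-1}=\frac{\beta}{\beta^2-1}$. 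Hence $T_0^i(x)\in\big[\frac{1}{\beta^2-1},\frac{\beta}{\beta^2-1}\big]=\mathcal{O}_{\beta}$. The uniform bound comes from the worst case $x=$ (left endpoint of $\mathcal{J}_{\beta}$): as this is a fixed positive number, the least $i$ with $\beta^i\cdot(\text{left endpoint})\geq\frac{1}{\beta^2-1}$ is finite, and monotonicity in $x$ shows every point of the interval is absorbed within that many steps; call this number $n_1(\beta)$ (for this bullet).

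The second bullet is handled by the mirror-image argument. Here $T_1(x)=\beta x-1$ has repelling fixed point $\frac{1}{\beta-1}$, and for $x<\frac{1}{\beta-1}$ one has $T_1(x)<x$, so starting from the second interval (whose right end is below $\frac{1}{\beta-1}$) the orbit is strictly decreasing and tends to $-\infty$. I would take the least $i\geq 1$ with $T_1^i(x)\leq\frac{\beta}{\beta^2-1}$; by minimality $T_1^{i-1}(x)>\frac{\beta}{\beta^2-1}$ when $i>1$, and $x\geq\frac{\beta}{\beta^2-1}$ when $i=1$, so $T_1^i(x)=\beta\,T_1^{i-1}(x)-1>\beta\cdot\frac{\beta}{\beta^2-1}-1=\frac{1}{\beta^2-1}$, the last equality being the second identity of Lemma \ref{basic lemma}. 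Combined with $T_1^i(x)\leq\frac{\beta}{\beta^2-1}$ this places $T_1^i(x)$ in $\mathcal{O}_{\beta}$, and the uniform bound again comes from the worst case $x=$ (right endpoint of $\mathcal{J}_{\beta}$). Taking $n_1(\beta)$ to be the larger of the two bounds finishes the argument.

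I do not expect a genuine obstacle here: the only point that must be handled with care is the elementary but essential observation that the two endpoint identities of Lemma \ref{basic lemma} prevent the expanding orbit from stepping over $\mathcal{O}_{\beta}$ at the instant it first reaches it. The remaining bookkeeping — verifying that the two intervals are nontrivial and that the relevant endpoints are positive, respectively below $\frac{1}{\beta-1}$ — is routine once the nesting of Lemma \ref{basic lemma2} is in hand.
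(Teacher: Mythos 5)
Your proof is correct and follows essentially the same route as the paper's: expansivity of $T_0$ and $T_1$ from their fixed points, positivity of the left endpoint of $\mathcal{J}_{\beta}$ supplied by Lemma \ref{basic lemma2} (where $\beta<\frac{1+\sqrt{5}}{2}$ is used), the endpoint identities of Lemma \ref{basic lemma} to show the first crossing cannot overshoot $\mathcal{O}_{\beta}$, and monotonicity to get a uniform bound $n_1(\beta)$ from the worst-case endpoint. The only differences are cosmetic: you write out the second bullet, which the paper dismisses as ``dealt with similarly,'' and there your inequality for the case $i=1$ should be $\geq$ rather than $>$ (harmless, since $\mathcal{O}_{\beta}$ is closed).
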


\begin{proof}
We begin our proof by pointing out that the left endpoint of $I_{\beta}$ is the fixed point of $T_0$ and the right endpoint of $I_{\beta}$ is the fixed point of $T_{1}$. Moreover, the maps $T_{0}$ and $T_{1}$ expand distances from their respective fixed points in the following way:
\begin{equation}
\label{expansion equation}
T_{0}(x)-0=\beta(x -0) \textrm{ and } T_{1}(x)-\frac{1}{\beta-1}=\beta\big(x-\frac{1}{\beta-1}\Big).
\end{equation} Let us fix $$x\in\Big[\frac{\beta}{2}\Big(\frac{1}{\beta}+\frac{1}{\beta^2-1}\Big)-1,\frac{1}{\beta^2-1}\Big].$$ The second case is dealt with similarly. Lemma \ref{basic lemma2} guarantees $$\frac{\beta}{2}\Big(\frac{1}{\beta}+\frac{1}{\beta^2-1}\Big)-1>0.$$ Let $n_1(\beta)\in\mathbb{N}$ be the unique natural number which satisfies $$\beta^{n_1(\b)-1}\Big(\frac{\beta}{2}\Big(\frac{1}{\beta}+\frac{1}{\beta^2-1}\Big)-1)\leq \frac{1}{\beta^2-1}<\beta^{n_1(\b)}\Big(\frac{\beta}{2}\Big(\frac{1}{\beta}+\frac{1}{\beta^2-1}\Big)-1).$$ Then by \eqref{expansion equation}, the monotonicity of $T_0,$ and the first part of Lemma \ref{basic lemma}, there must exist $1\leq i\leq n_{1}(\beta)$ such that $T_{0}^i(x)\in\mathcal{O}_{\beta}$.
\end{proof}
Equipped with Lemma \ref{basic lemma 2} we are now in a position to prove Proposition \ref{important prop}.

\begin{proof}[Proof of Proposition \ref{important prop}.]
Let $\beta\in(1,\frac{1+\sqrt{5}}{2})$ and $x\in \mathcal{O}_\beta$. We only show how to construct $\omega^0$. The construction of $\omega^1$ follows from an analogous argument. Alternatively, one could consider $x'=\frac{1}{\beta-1}-x.$ It can be shown that if we took the corresponding $\omega^0$ for $x'$ and replaced every occurrence of $T_0$ with $T_1$ and $T_1$ with $T_0,$ then the resulting sequence would have the desired properties of a $\omega^1$ for our original $x$.

Let us start by considering the image of $x$ under $T_{1}(x)$. By Lemma  \ref{basic lemma} we know that $T_{1}(x)\in[\frac{\beta}{\beta^2-1}-1,\frac{1}{\beta^2-1}]$. There are two cases to consider, either $T_{1}(x)\notin \mathcal{I}_{\beta}$ or $T_{1}(x)\in \mathcal{I}_{\beta}$. We start with the first case.
\\

\noindent \textbf{Case 1. }Suppose $T_{1}(x)\notin \mathcal{I}_{\beta},$ then $$T_1(x)<\frac{1}{2}\Big(\frac{1}{\beta}+\frac{1}{\beta^2-1}\Big)$$ and consequently
$$x<\frac{\beta}{\beta^2-1}-\delta(\beta)$$
where
\begin{align*}
\delta(\beta)&:=\frac{\beta}{\beta^2-1}-T_{1}^{-1}\Big(\frac{1}{2}\Big(\frac{1}{\beta}+\frac{1}{\beta^2-1}\Big)\Big) \\
&=\frac{\beta}{\beta^2-1}-\frac{1}{\beta}-\frac{1}{2\beta}\Big(\frac{1}{\beta}+\frac{1}{\beta^2-1}\Big)>0.
\end{align*}
Importantly $\delta=\delta(\beta)$ only depends upon $\beta$.

We repeatedly apply $T_{0}$ to $T_{1}(x)$ until $(T_{0}^{i_{1}}\circ T_{1})(x)\in \mathcal{O}_{\beta}.$ This is permissible by Lemma \ref{basic lemma}. It is a consequence of Lemma \ref{basic lemma 2} that $ i_{1} \leq n_1(\beta)$ for some $n_1(\beta)$ that only depends upon $\beta$. If $i_{1}>1$ then we stop and take $\omega^{0}= (T_1,(T_{0})^{i_{1}}).$

If $i_1=1$ then $$(T_{0}\circ T_{1})(x)\in \mathcal{O}_{\beta}$$ and
\begin{equation}
\label{escape}
(T_{0}\circ T_{1})(x)<\frac{\beta}{\beta^2-1}-\beta^2\delta(\beta).
 \end{equation}Equation \eqref{escape} follows because $\frac{\beta}{\beta^2-1}$ is the unique fixed point of $T_{0}\circ T_{1}$ and this map expands distances by a factor $\beta^2$. We now repeat our first step with $x$ replaced by $(T_{0}\circ T_{1})(x)$, i.e. we consider $(T_{1}\circ T_{0}\circ T_{1})(x)$ and apply $T_{0}$ until $(T_{0}^{i_{2}}\circ T_{1}\circ T_{0}\circ T_{1})(x)\in \mathcal{O}_{\beta}.$ If $i_2>1$ we stop and take $\omega^0=(T_1,T_0,T_1,(T_{0})^{i_2})$.

If $i_2=1$ then $$(T_{0}\circ T_{1}\circ T_{0}\circ T_{1})(x)\in\mathcal{O}_{\beta}$$ and $$(T_{0}\circ T_{1}\circ T_{0}\circ T_{1})(x)<\frac{\beta}{\beta^2-1}-\beta^4\delta(\beta).$$ One can then repeat our first step with $x$ replaced by $(T_0\circ T_1 \circ T_{0}\circ T_{1})(x)$ and so on, each time obtaining a value for $i_j$ and stopping as soon as $i_j$ is strictly larger then $1$. Suppose we repeated this process $k$ times and each time our value of $i_j$ was $1.$ Then we would have
\begin{equation*}
(\overbrace{(T_{0}\circ T_{1})\circ \cdots \circ(T_0\circ T_{1})}^{k\textrm{-times}})(x)\in\mathcal{O}_{\beta}.
\end{equation*} In which case
\begin{equation}
\label{Distance from endpoint}
\beta^{2k}\delta(\beta)<\frac{\beta}{\beta^2-1}-(\overbrace{(T_{0}\circ T_{1})\circ \cdots \circ (T_0\circ T_{1})}^{k\textrm{-times}})(x)\leq \frac{\beta}{\beta^2-1}-\frac{1}{\beta^{2}-1}.
\end{equation}Let $n_2(\beta)\in\mathbb{N}$ be the unique natural number satisfying
\begin{equation}
\label{n_{2} equation}
\beta^{2(n_{2}(\beta)-1)}\delta(\beta)\leq \frac{\beta}{\beta^2-1}-\frac{1}{\beta^{2}-1}<\beta^{2n_{2}(\beta)}\delta(\beta).
\end{equation}
By \eqref{Distance from endpoint} and \eqref{n_{2} equation} there must exists $k\leq n_2(\beta)$ such that $$\overbrace{(T_{0}\circ T_{1})\circ \cdots \circ(T_0\circ T_{1})}^{k\textrm{-times}}(x)\notin\mathcal{O}_{\beta}.$$ At which point $i_k>1$. We may take $\omega^0$ to be
\begin{equation}
\label{omega}
\omega^0=((T_{1},T_0)^{k-1}),T_1,T_{0}^{i_{k}}).
\end{equation}Note that $i_k\leq n_1(\beta)$ by Lemma \ref{basic lemma 2} and therefore $|\omega_{0}|\leq n_1(\beta)+2n_2(\beta)-1.$ This upper bound only depends upon $\beta$. The fact that $\omega^{0}(x)\in\mathcal{O}_{\beta}$ follows from our algorithm. Moreover, it is clear from inspection of \eqref{omega} that $|\omega^{0}|_0>|\omega^{0}|_1.$ Therefore $\omega^{0}$ satisfies each of the required properties.
\\

\noindent \textbf{Case 2. }If $T_1(x)\in \mathcal{I}_{\beta}$ then $$T_{1}(x)\in\Big[\frac{1}{2}\Big(\frac{1}{\beta}+\frac{1}{\beta^2-1}\Big),\frac{1}{\beta^{2}-1}\Big].$$ We consider $T_{1}^2(x)$ and repeatedly apply $T_0$ until $(T_0^{i_{1}}\circ T_{1}^2)(x)\in \mathcal{O}_{\beta}$. We cannot have $i_{1}=1,$ since by the monotonicity of our maps we would then have $$(T_{0}\circ T_{1})\Big(\frac{1}{\beta^{2}-1}\Big)\geq \frac{1}{\beta^2-1}.$$ Which is not possible since $T_{0}\circ T_{1}$ expands the distance from the fixed point $\frac{\beta}{\beta^2-1}$ by a factor $\beta^2$. By Lemma \ref{basic lemma 2} we must have $i_{1}\leq n_1(\beta)$. If $i_{1}>2$ then we may stop and take $\omega^0=(T_{1},T_{1},(T_0)^{i_{1}}).$

If $i_1=2$ then $$(T_{0}\circ T_0\circ T_1\circ T_{1})(x)\in \Big[\frac{1}{\beta^2-1},(T_{0}\circ T_0\circ T_{1}\circ T_1)\Big(\frac{\beta}{\beta^2-1}\Big)\Big].$$
But for any $\beta\in(1,2)$ it can be shown that $$(T_{0}\circ T_0\circ T_{1}\circ T_1)\Big(\frac{\beta}{\beta^2-1}\Big)=\frac{\beta^3+\beta^2-\beta^4}{\beta^2-1}<\frac{\beta}{\beta^2-1}.$$ Therefore if $i_1=2$ then $$(T_{0}\circ T_0\circ T_{1}\circ T_1)(x)\leq \frac{\beta}{\beta^2-1}-\delta'(\beta),$$ where $$\delta'(\beta):=\frac{\beta}{\beta^2-1}-\frac{\beta^3+\beta^2-\beta^4}{\beta^2-1}>0.$$ We are now in a position where we can replicate the arguments used in Case $1$. We apply $T_{1}$ to $(T_{0}\circ T_0\circ T_{1}\circ T_1)(x)$ and then repeatedly apply $T_0$ until the orbit returns to $\mathcal{O}_\beta$. If we apply $T_0$ more than once we stop, if we apply $T_0$ only once then we repeat the previous step. The positivity of $\delta'(\beta)$ implies that the number of times an orbit can immediately return to $\mathcal{O}_\beta$ is bounded above by some parameter only depending upon $\beta$. We also know by Lemma \ref{basic lemma 2} that the number of iterations of $T_0$ required to map our orbit back into $\mathcal{O}_{\beta}$ is bounded above by some constant that only depends upon $\beta$. These two remarks imply the existence of the required $\omega^{0}$ and $n(\beta)$.

\end{proof}

Proposition \ref{important prop} allows us to effectively handle the parameter space $(1,\frac{1+\sqrt{5}}{2})$. To prove results within the interval $[\frac{1+\sqrt{5}}{2},2),$ we need to recall some background results from unique expansions. Given $\beta\in(1,2),$ let $$\mathcal{U}_{\beta}:=\Big\{x\in\Big[0,\frac{1}{\beta-1}\Big]:x \textrm{ has a unique }\beta\textrm{-expansion}\Big\}$$and
 $$\widetilde{\mathcal{U}}_{\beta}:=\Big\{(\epsilon_i)\in\{0,1\}^{\mathbb{N}}:\sum_{i=1}^{\infty}\frac{\epsilon_i}{\beta^i}\in\mathcal{U}_{\beta}\Big\}.$$
We call $\mathcal{U}_{\beta}$ the \emph{univoque set} and $\widetilde{\mathcal{U}}_{\beta}$ the \emph{univoque sequences}. By definition there is a bijection between these two sets. The study of these sets is classical. For more on these sets we refer the reader to \cite{BarBakKong, deVKom,KomKonLi} and the references therein.

A useful tool for studying univoque sequences is the lexicographic ordering. This is defined as follows. Given $(\epsilon_i),(\delta_i)\in\{0,1\}^{\mathbb{N}},$ we say that $(\epsilon_i)\prec (\delta_i)$ if $\epsilon_1<\delta_1,$ or if there exists $n\in\mathbb{N}$ such that $\epsilon_{n+1}<\delta_{n+1}$ and $\epsilon_i=\delta_i$ for $1\leq i \leq n$. One can also define $\preceq, \succ, \succeq$ in the natural way. We also let $\overline{\epsilon_i}=1-\epsilon_i$. When studying univoque sequences an important role is played by the \emph{quasi-greedy expansion} of $1$. This sequence is defined to be the lexicographically largest infinite $\beta$-expansion of $1$. We call a sequence infinite if it does not end in an infinite tail of zeros. Given $\beta\in(1,2)$ we denote the quasi-greedy expansion of $1$ by $\alpha(\beta)=(\alpha_i(\beta))$. The following characterisation of quasi-greedy expansions is due to Baiocchi and Komornik \cite{BaiKom}.

\begin{lemma}
\label{quasi greedy properties}
The map which sends $\beta$ to $\alpha(\beta)$ is a strictly increasing bijection from $(1,2]$ onto the set of sequences $(\alpha_i)\in\{0,1\}^{\mathbb{N}}$ which satisfy
\begin{equation}
\label{lexbound}
(\alpha_{n+i})\preceq (\alpha_i) \textrm{ whenever }\alpha_n=0.
\end{equation}
\end{lemma}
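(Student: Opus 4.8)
The plan is to unbundle the statement into three claims: that each $\alpha(\beta)$ lands in the prescribed set, that $\beta\mapsto\alpha(\beta)$ is strictly increasing (hence injective), and that it is onto. Throughout I would extend the quasi-greedy expansion from $1$ to any $x\in(0,\tfrac{1}{\beta-1}]$, writing $\alpha_\beta(x)$ for the lexicographically largest infinite $\beta$-expansion of $x$, so $\alpha(\beta)=\alpha_\beta(1)$. The tool I rely on repeatedly is the \emph{memoryless} description of this expansion: setting $r_0=x$, choosing $\alpha_{n+1}=1$ exactly when $\beta r_n>1$ (and $\alpha_{n+1}=0$ otherwise) with $r_{n+1}=\beta r_n-\alpha_{n+1}$, the remainders satisfy $r_n\in(0,1]$ for all $n$ and the resulting digits are $\alpha_\beta(x)$. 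Two consequences follow immediately: the tail $(\alpha_{n+i})_{i\geq1}$ of $\alpha_\beta(x)$ equals $\alpha_\beta(r_n)$, and $x\le y$ forces $\alpha_\beta(x)\preceq\alpha_\beta(y)$ (compare the two orbits digit by digit: the chosen digit is monotone in the remainder, and the order of remainders is preserved while digits agree).

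With these in hand the first two claims are short. For the image, the remainders of $\alpha(\beta)=\alpha_\beta(1)$ lie in $(0,1]$, so infiniteness is immediate (a tail of zeros would force $r_n=\beta^{k}r_{n-k}\to\infty$); and since $(\alpha_{n+i})_{i\geq1}=\alpha_\beta(r_n)$ with $r_n\le1$, monotonicity gives $(\alpha_{n+i})\preceq\alpha_\beta(1)=(\alpha_i)$, so \eqref{lexbound} holds (in fact for every $n$, not only when $\alpha_n=0$). For strict monotonicity, let $\beta_1<\beta_2$ and put $(a_i)=\alpha(\beta_1)$, so $\pi_{\beta_1}((a_i))=1$ and hence $v:=\pi_{\beta_2}((a_i))<1$. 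Since $(a_i)$ is an infinite $\beta_2$-expansion of $v$ and $\alpha_{\beta_2}(v)$ is the largest such, $(a_i)\preceq\alpha_{\beta_2}(v)$; and $v<1$ gives $\alpha_{\beta_2}(v)\prec\alpha_{\beta_2}(1)=\alpha(\beta_2)$. Thus $\alpha(\beta_1)\prec\alpha(\beta_2)$, which also yields injectivity.

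The real work, and the step I expect to be the obstacle, is surjectivity. Given $(\alpha_i)$ in the target set I would first note $\alpha_1=1$ (if $\alpha_1=0$, take the least $k$ with $\alpha_k=1$; then at the zero-position $n=k-1$ the tail begins with $1$ while $(\alpha_i)$ begins with $0$, contradicting \eqref{lexbound}), and then define $\beta_0\in(1,2]$ as the unique root of $\pi_{\beta_0}((\alpha_i))=1$: the left side is continuous and strictly decreasing in $\beta$, tends to $+\infty$ as $\beta\to1^+$ (infinitely many ones), and is at most $1$ at $\beta=2$. It then suffices to prove $(\alpha_i)=\alpha(\beta_0)$, i.e. that $(\alpha_i)$ is the largest infinite $\beta_0$-expansion of $1$. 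Writing $R_n:=\pi_{\beta_0}\big((\alpha_{n+i})_{i\geq1}\big)$, a first-difference computation shows that a lexicographically larger infinite expansion would force $R_m>1$ at some index with $\alpha_m=0$; so everything reduces to the \emph{metric} bound $R_n\le1$ for all $n$. This is the crux precisely because \eqref{lexbound} is purely lexicographic, and lexicographic order does not in general control the value $\pi_{\beta_0}$.

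I would resolve the crux by a self-bounding estimate on $M:=\sup_n R_n$. After upgrading \eqref{lexbound} to $(\alpha_{n+i})\preceq(\alpha_i)$ for \emph{all} $n$ (a standard lexicographic lemma: a first violation at a one-position reproduces a prefix of $(\alpha_i)$ followed by a larger symbol and propagates down to an honest zero-position violation), fix $n$ and let $s$ be the first index where $(\alpha_{n+i})$ and $(\alpha_i)$ differ, so $\alpha_{n+s}=0<1=\alpha_s$. Using $\pi_{\beta_0}((\alpha_i))=1$ to cancel the common prefix, one obtains the exact identity
\[
R_n-1=\beta_0^{-s}\big(R_{n+s}-R_s-1\big).
\]
Since $R_s\ge0$, $R_{n+s}\le M$ and $s\ge1$, this gives $R_n-1\le\beta_0^{-1}(M-1)$ for every $n$; taking the supremum yields $M-1\le\beta_0^{-1}(M-1)$, and as $\beta_0^{-1}<1$ this forces $M\le1$. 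Hence $R_n\le1$ for all $n$, the quasi-greedy algorithm on $1$ reproduces $(\alpha_i)$ digit for digit, and $(\alpha_i)=\alpha(\beta_0)$. In a full write-up the delicate parts are the ``upgrade'' lexicographic lemma and the bookkeeping behind the first-difference identity; the analytic heart, by contrast, is the one-line contraction estimate that turns the lexicographic hypothesis into the bound $M\le1$.
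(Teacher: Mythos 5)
You should know at the outset that the paper itself contains no proof of this lemma: it is quoted from Baiocchi--Komornik \cite{BaiKom}. So your argument can only be judged against the result itself, and most of it is a correct reconstruction of the standard route: the memoryless quasi-greedy algorithm with remainders in $(0,1]$, monotonicity of the chosen digit in the remainder (which gives both the image claim and strict monotonicity of $\beta\mapsto\alpha(\beta)$), and, for surjectivity, the exact identity $R_n-1=\beta_0^{-s}\bigl(R_{n+s}-R_s-1\bigr)$ together with the self-bounding estimate $M-1\le\beta_0^{-1}(M-1)$. I have checked the identity and the estimate; they are correct (one should note $M<\infty$ because $R_n\le 1/(\beta_0-1)$, and treat separately the indices with $(\alpha_{n+i})=(\alpha_i)$, where $R_n=1$). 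The combinatorial ``upgrade'' of \eqref{lexbound} to all indices $n$, which you leave as a sketch, is a true statement and can be proved by the propagation idea you describe, so I do not count it as a gap.

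The genuine gap is infiniteness, and it makes your surjectivity argument fail as written. Condition \eqref{lexbound} does not exclude sequences ending in $0^{\infty}$: both $10^{\infty}$ and $110^{\infty}$ satisfy \eqref{lexbound}, since every tail following a zero digit is $0^{\infty}$, but neither is a quasi-greedy expansion, because quasi-greedy expansions are infinite by definition. Consequently your parenthetical ``(infinitely many ones)'', used to get $\pi_{\beta}((\alpha_i))\to\infty$ as $\beta\to 1^{+}$, is not justified by the hypothesis. For $10^{\infty}$ the equation $\pi_{\beta_0}((\alpha_i))=1$ has no solution in $(1,2]$, so $\beta_0$ does not exist; for $110^{\infty}$ it does exist, namely $\beta_0=\frac{1+\sqrt{5}}{2}$, but the conclusion is false, since $\alpha\bigl(\frac{1+\sqrt{5}}{2}\bigr)=(10)^{\infty}\neq 110^{\infty}$. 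The precise point where your argument breaks for such inputs is the last step: to conclude that the quasi-greedy algorithm reproduces $(\alpha_i)$ digit by digit you need, at every index with $\alpha_{n+1}=1$, that $\beta_0 R_n=1+R_{n+1}>1$, i.e.\ $R_{n+1}>0$; that is exactly infiniteness, and you only prove the upper bound $R_n\le 1$. The resolution is that the lemma as printed is slightly misstated: the theorem of \cite{BaiKom} gives a bijection onto the set of \emph{infinite} sequences satisfying \eqref{lexbound}. If you add this hypothesis (and flag the omission), your proof is complete modulo writing out the sketched lexicographic lemma; without it, surjectivity is not just unproved but false.
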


We remark that if $x\in \mathcal{U}_{\beta}$ and $x\notin\{0,\frac{1}{\beta-1}\},$ then $x$ is eventually mapped into $(\frac{2-\beta}{\beta-1},1).$ Moreover, it is a consequence of being in $\mathcal{U}_{\beta}$ that once $x$ is mapped into $(\frac{2-\beta}{\beta-1},1),$ it cannot be mapped outside of $(\frac{2-\beta}{\beta-1},1).$ Consequently the following sets can be thought of as attractors for $\mathcal{U}_{\beta}$ and $\widetilde{\mathcal{U}}_{\beta}$. Let
$$\mathcal{A}_{\beta}:=\Big\{x\in\Big(\frac{2-\beta}{\beta-1},1\Big):x \textrm{ has a unique }\beta\textrm{-expansion}\Big\}$$and
 $$\widetilde{\mathcal{A}}_{\beta}:=\Big\{(\epsilon_i)\in\{0,1\}^{\mathbb{N}}:\sum_{i=1}^{\infty}\frac{\epsilon_i}{\beta^i}\in\mathcal{A}_{\beta}\Big\}$$
 The above attractor observation and the following lemma are due to Glendinning and Sidorov \cite{GlenSid}.
\begin{lemma}
\label{lexicographic lemma}
$$\widetilde{\mathcal{A}}_{\beta}=\Big\{(\epsilon_i)\in\{0,1\}^{\mathbb{N}}:(\overline{\alpha_i(\beta)})\prec(\epsilon_{n+i})\prec(\alpha_i(\beta))\textrm{ for all }n\in\mathbb{N}\Big\}$$
\end{lemma}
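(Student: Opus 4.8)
The plan is to characterise membership of $\widetilde{\mathcal{A}}_{\beta}$ through the \emph{greedy} and \emph{lazy} expansions and the classical lexicographic description of the greedy map, exploiting the reflection symmetry $(\epsilon_i)\mapsto(\overline{\epsilon_i})$. Recall that the greedy expansion of a point is its lexicographically largest expansion and the lazy expansion its lexicographically smallest; hence $x$ has a unique $\beta$-expansion if and only if its greedy and lazy expansions coincide. Under $\pi_{\beta}$ the reflection $(\epsilon_i)\mapsto(\overline{\epsilon_i})$ corresponds to $x\mapsto\frac{1}{\beta-1}-x$, which fixes the centre of $\widetilde{I}_{\beta}$ and interchanges the endpoints $1$ and $\frac{2-\beta}{\beta-1}$ defining $\mathcal{A}_{\beta}$; it sends $\alpha(\beta)$ to $(\overline{\alpha_i(\beta)})$ and reverses $\prec$. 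Consequently the lower bound $(\overline{\alpha_i})\prec(\epsilon_{n+i})$ is precisely the upper bound $(\overline{\epsilon_{n+i}})\prec(\alpha_i)$ applied to the reflected sequence, so it suffices to handle one of the two bounds and then invoke symmetry.

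The one ingredient I would quote is the classical lexicographic characterisation of greedy expansions (which rests on Lemma~\ref{quasi greedy properties}): a sequence $(\epsilon_i)$ is the greedy expansion of $\pi_{\beta}((\epsilon_i))$, and this value lies in $[0,1)$, if and only if $(\epsilon_{n+i})\prec(\alpha_i(\beta))$ for every $n$. Granting this, the two inclusions are symmetric. For $\subseteq$, take $(\epsilon_i)\in\widetilde{\mathcal{A}}_{\beta}$, so that $x:=\pi_{\beta}((\epsilon_i))$ is univoque and lies in $(\frac{2-\beta}{\beta-1},1)$; uniqueness forces $(\epsilon_i)$ to be simultaneously the greedy and the lazy expansion of $x$. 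Since $x<1$, the greedy description yields $(\epsilon_{n+i})\prec(\alpha_i)$ for all $n$, and applying the same description to $(\overline{\epsilon_i})$ — the greedy expansion of $\frac{1}{\beta-1}-x\in[0,1)$ because $x>\frac{2-\beta}{\beta-1}$ — yields $(\overline{\epsilon_{n+i}})\prec(\alpha_i)$, i.e. $(\epsilon_{n+i})\succ(\overline{\alpha_i})$, for all $n$. For $\supseteq$, assume $(\overline{\alpha_i})\prec(\epsilon_{n+i})\prec(\alpha_i)$ for all $n$. The upper bounds make $(\epsilon_i)$ the greedy expansion of $x$ with $x\in[0,1)$; read through the reflection, the lower bounds make $(\overline{\epsilon_i})$ the greedy expansion of $\frac{1}{\beta-1}-x\in[0,1)$, so $(\epsilon_i)$ is the lazy expansion of $x$ and $x\in(\frac{2-\beta}{\beta-1},\frac{1}{\beta-1}]$. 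Greedy equals lazy, whence $x$ is univoque, and the two value constraints intersect in $x\in(\frac{2-\beta}{\beta-1},1)$; thus $(\epsilon_i)\in\widetilde{\mathcal{A}}_{\beta}$.

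The point to stress, and the place where care is genuinely needed, is that for $\beta<2$ the projection $\pi_{\beta}$ is \emph{not} monotone with respect to $\prec$, so one cannot pass between the lexicographic bounds and the analytic bounds $\frac{2-\beta}{\beta-1}<\pi_{\beta}((\epsilon_{n+i}))<1$ by a naive comparison of values; this is exactly what the quasi-greedy expansion of Lemma~\ref{quasi greedy properties} buys us. If one prefers a self-contained proof of $\subseteq$, it can be given directly: by the attractor observation preceding the lemma every shifted value $\pi_{\beta}((\epsilon_{n+i}))$ lies in $(\frac{2-\beta}{\beta-1},1)$, and then, assuming for contradiction $(\epsilon_{n+i})\succeq(\alpha_i)$, the identity $\pi_{\beta}((\epsilon_{n+i}))-1=\beta^{-k}\big(1+\pi_{\beta}((\epsilon_{n+k+i}))-\pi_{\beta}((\alpha_{k+i}))\big)$ at the first index $k$ of disagreement, together with the trapping bound $\pi_{\beta}((\epsilon_{n+k+i}))>\frac{2-\beta}{\beta-1}$ and $\pi_{\beta}((\alpha_{k+i}))\le\frac{1}{\beta-1}$, forces the bracket to be positive and contradicts $\pi_{\beta}((\epsilon_{n+i}))<1$; the lower bound then follows by reflection. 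The genuinely delicate inclusion is $\supseteq$, where the purely combinatorial two-sided bound must be shown to force both uniqueness and membership in the \emph{open} central interval, and it is here that the greedy characterisation (hence Lemma~\ref{quasi greedy properties}) does the real work.
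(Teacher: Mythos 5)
You should know at the outset that the paper contains no proof of Lemma~\ref{lexicographic lemma}: it is imported wholesale from Glendinning and Sidorov \cite{GlenSid}, so there is no internal argument to measure yours against, and what you have produced is a reconstruction of the cited result. Your reconstruction is correct, and it follows what is essentially the standard (indeed the original) route: uniqueness is equivalent to the coincidence of the greedy and lazy expansions, the reflection $(\epsilon_i)\mapsto(\overline{\epsilon_i})$ conjugates lazy to greedy and $x$ to $\frac{1}{\beta-1}-x$, and the lexicographic characterisation of greedy expansions of points of $[0,1)$ yields both inclusions, the two value constraints intersecting exactly in $(\frac{2-\beta}{\beta-1},1)$. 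Your supplementary self-contained argument for $\subseteq$ is also sound: at the first index $k$ of disagreement the bracket $1+\pi_{\beta}((\epsilon_{n+k+i}))-\pi_{\beta}((\alpha_{k+i}))$ is bounded below by $1+\frac{2-\beta}{\beta-1}-\frac{1}{\beta-1}=0$, and the strict trapping bound supplied by the attractor observation makes it strictly positive, contradicting $\pi_{\beta}((\epsilon_{n+i}))<1$.

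Two caveats. First, the ingredient you quote (a sequence is the greedy expansion of a point of $[0,1)$ if and only if all of its shifts are $\prec\alpha(\beta)$) does not rest on Lemma~\ref{quasi greedy properties}: that lemma characterises which sequences occur as quasi-greedy expansions of $1$ as $\beta$ varies, whereas you need the Parry-type lexicographic characterisation for a \emph{fixed} $\beta$ (de Vries--Komornik \cite{deVKom}, or Baiocchi--Komornik \cite{BaiKom}). It is a genuine classical fact of roughly the same depth as the lemma being proved, so quoting it is defensible for a result the paper itself only cites, but it should be attributed as such rather than presented as a consequence of Lemma~\ref{quasi greedy properties}. Second, your argument uses the lexicographic bounds for the unshifted sequence ($n=0$) as well as for all proper shifts, and this is genuinely necessary: if the bounds are imposed only for $n\geq 1$, the statement fails for $\beta$ close to $2$. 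Indeed, prepend the digit $1$ to a sequence of $\widetilde{\mathcal{A}}_{\beta}$ whose value $y$ lies in $[\beta-1,1)$; every proper shift of the new sequence still satisfies the two-sided bounds, yet its value $(1+y)/\beta$ is at least $1$, so it does not belong to $\widetilde{\mathcal{A}}_{\beta}$. So you should state explicitly that ``for all $n\in\mathbb{N}$'' is to be read as including $n=0$, both in the statement and in your appeal to the greedy characterisation.
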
 Lemma \ref{lexicographic lemma} demonstrates the importance of the sequences $\alpha(\beta)$ when studying univoque sequences. The following lemma is a consequence of Lemma \ref{quasi greedy properties} and Lemma \ref{lexicographic lemma}.

\begin{lemma}
\label{inclusion lemma}
$\widetilde{\mathcal{A}}_{\beta}\subseteq \widetilde{\mathcal{A}}_{\beta'}$ for $\beta<\beta'.$
\end{lemma}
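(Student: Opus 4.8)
The plan is to read the result straight off the two structural lemmas that precede it, namely the Baiocchi--Komornik characterisation (Lemma \ref{quasi greedy properties}) and the Glendinning--Sidorov description of $\widetilde{\mathcal{A}}_\beta$ (Lemma \ref{lexicographic lemma}). The starting point is monotonicity of the quasi-greedy expansion: since Lemma \ref{quasi greedy properties} asserts that $\beta\mapsto\alpha(\beta)$ is a \emph{strictly increasing} bijection, the hypothesis $\beta<\beta'$ immediately yields $(\alpha_i(\beta))\prec(\alpha_i(\beta'))$ in the lexicographic order.

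Next I would invoke Lemma \ref{lexicographic lemma}, which says that membership in $\widetilde{\mathcal{A}}_\beta$ is controlled by a two-sided lexicographic window whose upper bound is $(\alpha_i(\beta))$ and whose lower bound is its reflection $(\overline{\alpha_i(\beta)})$. The key elementary observation is that complementation reverses the lexicographic order, i.e. $(a_i)\prec(b_i)$ implies $(\overline{b_i})\prec(\overline{a_i})$; applying this to the inequality from the previous paragraph gives $(\overline{\alpha_i(\beta')})\prec(\overline{\alpha_i(\beta)})$. Thus as $\beta$ increases the upper edge of the admissible window moves up while the lower edge moves down, so the window can only grow.

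To finish, I would take an arbitrary $(\epsilon_i)\in\widetilde{\mathcal{A}}_\beta$ and verify the defining inequalities for $\widetilde{\mathcal{A}}_{\beta'}$ shift by shift. For each $n$ we have $(\overline{\alpha_i(\beta)})\prec(\epsilon_{n+i})\prec(\alpha_i(\beta))$; chaining with $(\alpha_i(\beta))\prec(\alpha_i(\beta'))$ on the right and with $(\overline{\alpha_i(\beta')})\prec(\overline{\alpha_i(\beta)})$ on the left, and using transitivity of $\prec$, yields $(\overline{\alpha_i(\beta')})\prec(\epsilon_{n+i})\prec(\alpha_i(\beta'))$ for every $n\in\mathbb{N}$. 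By Lemma \ref{lexicographic lemma} this is precisely the statement that $(\epsilon_i)\in\widetilde{\mathcal{A}}_{\beta'}$, giving the desired inclusion.

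The argument is short and there is no genuine analytic obstacle; the only delicate points are bookkeeping. I would want to confirm that the order-reversal under complementation is stated and applied correctly, and—more importantly—that strictness is preserved throughout, so that the strict inequalities $\prec$ coming from Lemma \ref{lexicographic lemma} are never silently weakened to $\preceq$ when transitivity is invoked. Since every link in the chain is a strict lexicographic inequality, transitivity of $\prec$ keeps the conclusion strict and no boundary case can arise, so this is the step I would present most carefully even though it presents no real difficulty.
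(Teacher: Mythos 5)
Your proposal is correct and follows exactly the route the paper intends: the paper states Lemma \ref{inclusion lemma} without a written proof, noting only that it is "a consequence of Lemma \ref{quasi greedy properties} and Lemma \ref{lexicographic lemma}", and your argument—lexicographic monotonicity of $\beta\mapsto\alpha(\beta)$, order-reversal of complementation giving $(\overline{\alpha_i(\beta')})\prec(\overline{\alpha_i(\beta)})$, then chaining the strict inequalities through the characterisation of $\widetilde{\mathcal{A}}_\beta$—is precisely that consequence spelled out. Your attention to preserving strictness under transitivity of $\prec$ is the right detail to flag, and it goes through since $\prec$ is a strict total order on $\{0,1\}^{\mathbb{N}}$.
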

To extend our frequency results to the parameter space $(\frac{1+\sqrt{5}}{2},\beta_{KL})$ it is instructive to recall some properties of the Thue-Morse sequence. There are various ways to define the Thue-Morse sequence, we choose the following way defined via an iterative reflection process. Let $\tau^{0}=0$ and define $\tau^{1}$ to be $\tau^0$ concatenated with $\overline{\tau^0}$, i.e, $\tau^1=\tau^0\overline{\tau^0}=01.$ We then define $\tau^2$ to be $\tau^2:=\tau^{1}\overline{\tau^{1}}.$ We continue this process inductively, given $\tau^k$ let $\tau^{k+1}=\tau^k\overline{\tau^k}.$ We can repeat this process indefinitely and in doing so we obtain an infinite limit sequence $\tau:=(\tau_i)_{i=0}^{\infty}$. This $\tau$ is the Thue-Morse sequence. The first few $\tau^k$ and the initial digits of $\tau$ are listed below:
$$\tau^{0}=0,\, \tau^{1}=01,\, \tau^2=0110,\, \tau^3=01101001$$
$$\tau=0110\, 1001\, 1001\, 0110\cdots.$$ For more on the Thue-Morse sequence we refer the reader to \cite{AllShall}. The significance of the Thue-Morse sequence within expansions in non-integer bases is that the Komornik-Loreti constant $\beta_{KL}\approx 1.787$, that is the smallest $\beta\in(1,2)$ such that $1$ has a unique $\beta$-expansion, is the unique solution to the equation $$1=\sum_{i=1}^{\infty}\frac{\tau_i}{\beta^{i}}.$$ For a proof of this fact see \cite{KomLor}. In \cite{AllCos} it was shown that $\beta_{KL}$ is transcendental.

Of particular importance to us are the sequences $$\upsilon^n=(\upsilon_i^n)_{i=1}^{\infty}:=(\tau_1,\ldots,\tau_{2^{n}-1},0)^{\infty}$$ and $$\kappa^n:=(T_{\tau_{0}},T_{\tau_1},\ldots, T_{\tau_{2^{n}-1}})\in\{T_0,T_1\}^{2^{n}}.$$ It can be shown that the sequences $\upsilon^n$ all satisfy \eqref{lexbound}. Therefore by Lemma \ref{quasi greedy properties} for each $n\in\mathbb{N}$ there exists $\beta_{n}\in (1,2)$ such that $\alpha(\beta_n)=\upsilon^{n}.$ It follows from the definitions that $\beta_{1}=\frac{1+\sqrt{5}}{2}$ and $\beta_n\nearrow \beta_{KL}.$ Moreover, for any $\beta\in [\beta_{n},\beta_{n+1})$ we have the following properties:
\begin{equation}
\label{switch1}
\pi_{\beta}((\tau^1)^{\infty})<\pi_{\beta}((\tau^2)^{\infty})<\cdots<\pi_{\beta}((\tau^n)^{\infty})\leq \frac{1}{\beta}< \pi_{\beta}((\tau^{n+1})^{\infty})
\end{equation}
\begin{equation}
\label{switch2}
\pi_{\beta}((\overline{\tau^{n+1}})^{\infty})<\frac{1}{\beta(\beta-1)}\leq \pi_{\beta}((\overline{\tau^n})^{\infty})<\cdots <\pi_{\beta}((\overline{\tau^2})^{\infty})<\pi_{\beta}((\overline{\tau^1})^{\infty})
\end{equation}and
\begin{equation}
\label{switch3}
\pi_{\beta}((\tau^{n+1})^{\infty})<\pi_{\beta}((\overline{\tau^{n+1}})^{\infty}).
\end{equation}Equations \eqref{switch1} and \eqref{switch2} are a consequence of the main result of \cite{AllSid}, in particular see Theorem $1.3$ and Proposition $2.16$ from this paper. Proving equation \eqref{switch3} holds is a straightforward calculation.

We also highlight the following facts which are a consequence of the Thue-Morse construction. For all $\beta\in(1,2)$
\begin{equation}
\label{fix equation}
\kappa^n(\pi_{\beta}((\tau^n)^\infty))=\pi_{\beta}((\tau^n)^\infty) \textrm{ and }\overline{\kappa^n}(\pi_{\beta}((\overline{\tau^n})^\infty))=\pi_{\beta}((\overline{\tau^n})^\infty)
\end{equation}
\begin{equation}
\label{flip equation}
\kappa^n(\pi_{\beta}((\tau^{n+1})^\infty))=\pi_{\beta}((\overline{\tau^{n+1}})^\infty) \textrm{ and }\overline{\kappa^n}(\pi_{\beta}((\overline{\tau^{n+1}})^\infty))=\pi_{\beta}((\tau^{n+1})^\infty).
\end{equation}
In \eqref{fix equation} and \eqref{flip equation} we have used $\overline{\kappa^n}$ to denote the sequence of maps obtained by replacing each $T_0$ in $\kappa^n$ with $T_1,$ and each $T_1$ in $\kappa^n$ with $T_0$. Observe that \eqref{fix equation} asserts that  $\pi_{\beta}((\tau^n)^\infty)$ and $\pi_{\beta}((\overline{\tau^n})^\infty)$ are the fixed points of $\kappa^n$ and $\overline{\kappa^n}$ respectively, and \eqref{flip equation} states that $\pi_{\beta}((\tau^{n+1})^\infty)$ and $\pi_{\beta}((\overline{\tau^{n+1}})^\infty)$ are mapped from one to the other by $\kappa^n$ and $\overline{\kappa^n}$ respectively. As we will see, these points will play a similar role to that played by the endpoints of $I_{\beta}$ and $\mathcal{O}_{\beta}$ within the parameter space $(1,\frac{1+\sqrt{5}}{2}).$

The following lemma is a consequence of the construction of the Thue-Morse sequence described above.

\begin{lemma}
\label{tau normal}
For all $n\geq 1$ we have $$\frac{\#\{0\leq i \leq |\tau^n|-1:\tau_i^{n}=0\}}{|\tau^n|}=\frac{1}{2}\textrm{ and }\frac{\#\{0\leq i \leq |\overline{\tau^n}|-1:\overline{\tau_i^{n}}=0\}}{|\overline{\tau^n}|}=\frac{1}{2}.$$ Consequently, $(\tau^n)^{\infty}$ and $(\overline{\tau^n})^{\infty}$ are simply normal for all $n\in\mathbb{N}$. Similarly, for all $n\geq 1$ we have $$\frac{|\kappa^n|_0}{|\kappa^n|}=\frac{1}{2} \textrm{ and } \frac{|\kappa^n|_1}{|\kappa^n|}=\frac{1}{2}.$$
\end{lemma}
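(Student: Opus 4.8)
The plan is to prove the statement about $\tau^n$ by exploiting the self-reflective structure of the Thue-Morse construction, and then to deduce the claims about $\overline{\tau^n}$, the periodic sequences, and $\kappa^n$ as easy consequences. The key observation is that for \emph{any} finite binary word $w$, the concatenation $w\overline{w}$ contains exactly as many $0$'s as $1$'s: the number of $0$'s contributed by $\overline{w}$ equals the number of $1$'s in $w$, so the total number of $0$'s in $w\overline{w}$ is $(\#0\textrm{ in }w)+(\#1\textrm{ in }w)=|w|$, which is exactly half of $|w\overline{w}|=2|w|$. Since $\tau^n=\tau^{n-1}\overline{\tau^{n-1}}$ for every $n\geq 1$, applying this observation with $w=\tau^{n-1}$ gives $\#\{0\leq i\leq |\tau^n|-1:\tau_i^n=0\}=2^{n-1}=|\tau^n|/2$, which is the first equality. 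For $\overline{\tau^n}$, note that flipping every digit turns $0$'s into $1$'s and vice versa, so $\overline{\tau^n}$ has exactly as many $0$'s as $\tau^n$ has $1$'s, namely $2^{n-1}$; dividing by $|\overline{\tau^n}|=2^n$ gives the second equality.

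Next I would deduce simple normality of $(\tau^n)^\infty$ and $(\overline{\tau^n})^\infty$ from the fact that a purely periodic sequence has a well-defined digit frequency equal to the proportion within a single period. Writing $m=k\cdot 2^n+r$ with $0\leq r<2^n$, the number of $0$'s among the first $m$ entries of $(\tau^n)^\infty$ equals $k\cdot 2^{n-1}$ plus the number of $0$'s among the first $r$ entries of $\tau^n$; the latter is bounded by $2^n$ independently of $m$. Dividing by $m$ and letting $m\to\infty$, the bounded remainder term vanishes and the main term converges to $\frac{k\cdot 2^{n-1}}{k\cdot 2^n}=\frac12$. Hence $\textrm{freq}_0((\tau^n)^\infty)=1/2$, so $(\tau^n)^\infty$ is simply normal, and the same argument applies verbatim to $(\overline{\tau^n})^\infty$.

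Finally, the statement for $\kappa^n$ is a direct translation of the first equality. By definition $\kappa^n=(T_{\tau_0},T_{\tau_1},\ldots,T_{\tau_{2^n-1}})$, so $|\kappa^n|_0$ counts the indices $0\leq i\leq 2^n-1$ with $\tau_i=0$; since $\tau^n$ is precisely the prefix $\tau_0\tau_1\cdots\tau_{2^n-1}$ of the Thue-Morse sequence, this count equals the number of $0$'s in $\tau^n$, namely $2^{n-1}$. Dividing by $|\kappa^n|=2^n$ yields $|\kappa^n|_0/|\kappa^n|=1/2$, and the complementary identity $|\kappa^n|_1/|\kappa^n|=1/2$ follows at once. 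There is no genuine obstacle here: the entire lemma reduces to the elementary parity property of $w\overline{w}$, and the only points requiring a little care are the bookkeeping ones, namely confirming that the reflective definition of $\tau^n$ agrees with the length-$2^n$ prefix of the infinite sequence $\tau$ used to build $\kappa^n$, and handling the incomplete final block in the periodic frequency computation.
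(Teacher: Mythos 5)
Your proposal is correct, and it is essentially the argument the paper has in mind: the paper states this lemma without detailed proof, asserting only that it ``is a consequence of the construction of the Thue-Morse sequence,'' and your write-up simply fills in those details via the reflective identity $\tau^n=\tau^{n-1}\overline{\tau^{n-1}}$, the digit-flip symmetry for $\overline{\tau^n}$, the standard periodic-block frequency computation, and the observation that $\tau^n$ is the length-$2^n$ prefix of $\tau$ defining $\kappa^n$. Nothing in your argument deviates from or adds to what the paper's construction already provides, so there is no gap and no genuinely different route to compare.
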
Lemma \ref{tau normal} implies that if $x$ can be mapped onto $\pi_{\beta}((\tau^n)^\infty)$ or $\pi_{\beta}((\overline{\tau^n})^\infty)$ then $x$ must have a simply normal $\beta$-expansion. This observation will be used in our proof of Theorem \ref{frequency theorem}.

\section{Proofs of our digit frequency statements}
\label{digit frequencies}

\subsection{Proofs for Theorem \ref{frequency theorem} and Theorem \ref{hybrid theorem}}
We start this section by proving a proposition that implies statement $3$ of Theorem \ref{frequency theorem}, and statement $1$ of Theorem \ref{frequency theorem} for the parameter space $(1,\frac{1+\sqrt{5}}{2}).$ This proposition will also allow us to immediately prove Theorem \ref{hybrid theorem}.

\begin{proposition}
\label{freq prop}
Let $\beta\in(1,\frac{1+\sqrt{5}}{2}).$ Then there exists $c=c(\beta)>0,$ such that for every $p\in[1/2-c,1/2+c]$ and $x\in(0,\frac{1}{\beta-1})$($x\in(-\frac{1}{\beta-1},\frac{1}{\beta-1})$), there exists $(\epsilon_i)\in\{0,1\}^{\mathbb{N}}$($\epsilon_i\in\{-1,1\}^{\mathbb{N}}$) such that $\sum_{i=1}^{\infty}\epsilon_i\beta^{-i}=x$ and $\textrm{freq}_0(\epsilon_i)=p$($\textrm{freq}_{-1}(\epsilon_i)=p$).
\end{proposition}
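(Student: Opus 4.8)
The plan is to reduce everything to the interval $\mathcal{O}_{\beta}$ and then to build the desired expansion one block at a time, using the two return words supplied by Proposition \ref{important prop} to \emph{steer} the running frequency of zeros toward the target $p$. I will only treat the digit set $\{0,1\}$; the $\{-1,1\}$ case is identical after replacing $\mathcal{O}_{\beta}$ by $\widetilde{\mathcal{O}}_{\beta}$, the words $\omega^{0},\omega^{1}$ by $\omega^{-1},\omega^{1}$, and $\textrm{freq}_{0}$ by $\textrm{freq}_{-1}$.

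First I would fix $\beta\in(1,\frac{1+\sqrt{5}}{2})$, let $n(\beta)$ be the constant from Proposition \ref{important prop}, and set $c:=\frac{1}{2n(\beta)}$. Given $x\in(0,\frac{1}{\beta-1})$, Lemma \ref{basic lemma} provides a finite word $\mathbf{w}\in\{T_0,T_1\}^{*}$ with $\mathbf{w}(x)\in\mathcal{O}_{\beta}$ whose intermediate orbit stays in $I_{\beta}$. This prefix has some fixed length depending on $x$, but being finite it affects no asymptotic frequency. From $\mathbf{w}(x)\in\mathcal{O}_{\beta}$ I would then invoke Proposition \ref{important prop} repeatedly: at each stage we sit at a point $y\in\mathcal{O}_{\beta}$ and may append either $\omega^{0}$ or $\omega^{1}$, each of length at most $n(\beta)$, each returning us to $\mathcal{O}_{\beta}$, with $\omega^{0}$ containing strictly more $T_0$'s than $T_1$'s and $\omega^{1}$ strictly more $T_1$'s than $T_0$'s. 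Concatenating $\mathbf{w}$ with the resulting infinite sequence of blocks keeps the whole orbit in $I_{\beta}$, so by Lemma \ref{Bijection lemma} it corresponds to a genuine $\beta$-expansion $(\epsilon_i)$ of $x$.

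The key quantitative observation is that any $\omega^{0}$-block has zero-frequency $q_0=\frac{|\omega^{0}|_0}{|\omega^{0}|}\ge \frac{1}{2}+\frac{1}{2|\omega^{0}|}\ge \frac{1}{2}+c$, because $|\omega^{0}|_0\ge|\omega^{0}|_1+1$ and $|\omega^{0}|\le n(\beta)$; symmetrically any $\omega^{1}$-block has zero-frequency at most $\frac{1}{2}-c$. Hence for every $p\in[\frac{1}{2}-c,\frac{1}{2}+c]$ an $\omega^{0}$-block has zero-frequency $\ge p$ and an $\omega^{1}$-block has zero-frequency $\le p$, which is exactly what is needed to steer. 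Writing $Z_k,N_k$ for the number of zeros and the total length after $k$ blocks, I would choose the $(k+1)$st block to be $\omega^{0}$ if $Z_k\le pN_k$ and $\omega^{1}$ otherwise. Setting $d_k:=Z_k-pN_k$, appending an $\omega^{0}$-block when $d_k\le 0$ changes $d_k$ by $|\omega^{0}|(q_0-p)\in[0,n(\beta)]$, while appending an $\omega^{1}$-block when $d_k>0$ changes it by a quantity in $[-n(\beta),0]$. A one-line induction then yields $d_k\in[\min(d_0,-n(\beta)),\max(d_0,n(\beta))]$ for all $k$, so $(d_k)$ is bounded.

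Boundedness of $(d_k)$ is all that is required: since $N_k\to\infty$ we get $Z_k/N_k=p+d_k/N_k\to p$ along block boundaries, and because each block has length at most $n(\beta)$, the partial counts at indices strictly between two boundaries differ from the boundary values by at most $n(\beta)$, so $Z_N-pN$ stays bounded for all $N$ and $\textrm{freq}_{0}(\epsilon_i)=\lim_{N}Z_N/N=p$ exists and equals $p$. The analogous construction with $\widetilde{\mathcal{O}}_{\beta}$ and $\omega^{-1},\omega^{1}$ settles the $\{-1,1\}$ case. I expect the only real difficulty to be conceptual rather than technical: once Proposition \ref{important prop} is in hand, the proof collapses to the bounded-discrepancy bookkeeping above, since the genuinely hard work — producing frequency-biased return words of uniformly bounded length for \emph{every} point of $\mathcal{O}_{\beta}$, which simultaneously guarantees the orbit constraint and the digit bias — has already been carried out there.
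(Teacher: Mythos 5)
Your proposal is correct and follows essentially the same route as the paper's own proof: the same constant $c=\frac{1}{2n(\beta)}$, the same frequency bounds on the blocks $\omega^{0},\omega^{1}$ (the paper's inequalities \eqref{positive growth} and \eqref{negative growth}), the same greedy steering rule based on the sign of the discrepancy $Z_k-pN_k$, and the same bounded-discrepancy conclusion passed from block boundaries to all indices via $|\lambda^{k+1}|-|\lambda^{k}|\leq n(\beta)$. The only differences are cosmetic (the paper phrases the induction as maintaining $\big||\lambda^{k}|_0-p|\lambda^{k}|\big|\leq n(\beta)$ rather than your interval for $d_k$), so there is nothing to add.
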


\begin{proof}
We only give a proof for the digit set $\{0,1\}$. The digit set $\{-1,1\}$ is dealt with similarly. Let us start by fixing $\beta\in(1,\frac{1+\sqrt{5}}{2})$ and $x\in(0,\frac{1}{\beta-1}).$ Lemma \ref{basic lemma} states that every $x\in(0,\frac{1}{\beta-1})$ is mapped into $\mathcal{O}_{\beta}$ by some finite sequence of maps. Since the frequency of zeros of a sequence is independent of any initial finite block, we may therefore assume without loss of generality that $x\in\mathcal{O}_{\beta}$.

Let $n(\beta)$ be as in Proposition \ref{important prop}. Consider an element $\omega\in\{T_0,T_1\}^{*}$ such that $|\omega|\leq n(\beta)$ and $|\omega|_0>|\omega|_1,$ then
\begin{equation}
\label{positive growth}
\frac{|\omega|_0}{|\omega|}\geq \frac{1}{|\omega|}\Big(\Big[\frac{|\omega|}{2}\Big]+1\Big)\geq \frac{1}{2}+\frac{1}{2n(\beta)}.
\end{equation}
The second inequality in \eqref{positive growth} is a consequence of  $|\omega|\leq n(\beta)$ and the following formula
\[ \frac{1}{|\omega|}\Big(\Big[\frac{|\omega|}{2}\Big]+1\Big) = \left\{ \begin{array}{ll}
         \frac{1}{2}+\frac{1}{2k} & \mbox{if $|\omega|=2k$  };\\
        \frac{1}{2}+\frac{1}{2(2k+1)} & \mbox{if $|\omega|=2k+1$}.\end{array} \right. \] Similarly, if $|\omega|\leq n(\beta)$ and $|\omega|_1>|\omega|_0,$ then
\begin{equation}
\label{negative growth}
\frac{|\omega|_0}{|\omega|}\leq \frac{1}{|\omega|}\Big(\Big[\frac{|\omega|}{2}\Big]-1\Big)\leq \frac{1}{2}-\frac{1}{2n(\beta)}.
\end{equation}
We now show that for any $$p\in\Big[\frac{1}{2}-\frac{1}{2n(\beta)},\frac{1}{2}+\frac{1}{2n(\beta)}\Big],$$ there exists a sequence $(\epsilon_i)$ such that $(\epsilon_i)$ is a $\beta$-expansion of $x$ and $\textrm{freq}_0(\epsilon_i)=p.$ We do this by constructing an algorithm which for any $p$ yields the desired sequence $(\epsilon_i)$. Our result will then follow by taking $c=(2n(\beta))^{-1}.$
\\

\noindent \textbf{Step $1.$ } If $p\in[\frac{1}{2}-\frac{1}{2n(\beta)},\frac{1}{2})$ then map $x$ to $\omega^{1}(x).$ If $p\in[\frac{1}{2},\frac{1}{2}+\frac{1}{2n(\beta)}]$ then map $x$ to $\omega^{0}(x).$ Here $\omega^0$ and $\omega^1$ are the sequences of transformations guaranteed by Proposition \ref{important prop}. Whichever of these maps we apply we call it $\lambda^{1}.$ Note that we trivially have the inequality $$ \big||\lambda^1|_0-p|\lambda^0|\big|\leq n(b).$$ We finish our first step by remarking that $\lambda^1(x)\in\mathcal{O}_{\beta}$ by Proposition \ref{important prop}.
\\

\noindent \textbf{Step $k+1$. } Suppose we have constructed $\lambda^k\in\{T_0,T_1\}^{*}$ such that $\lambda^{k}(x)\in \mathcal{O}_{\beta}$ and
\begin{equation}
\label{step k}
\big||\lambda^k|_0-p|\lambda^k|\big|\leq n(b).
\end{equation}We now show how to construct $\lambda^{k+1}$ satisfying \eqref{step k}. Either $|\lambda^k|_0\geq p|\lambda^k|$ or $|\lambda^k|_0< p|\lambda^k|.$ If $|\lambda^k|_0\geq p|\lambda^k|$ we take the map $\omega^1$ guaranteed by Proposition \ref{important prop} and apply it to $\lambda^{k}(x).$ We then let $\lambda^{k+1}=(\lambda^k,\omega^1)$ and observe that
\begin{align}
\label{low n}
|\lambda^{k+1}|_0 - p|\lambda^{k+1}|&=|\lambda^{k}|_{0}+|\omega^{1}|_0 - p(|\lambda^{k}|+|\omega^1|) \nonumber\\
&\geq (|\lambda^{k}|_0-p|\lambda^k|_0)+ |\omega^1|_0- p|\omega^1| \nonumber\\
&\geq - p|\omega^1|\nonumber\\
&\geq -n(\beta).
\end{align}
In our final inequality we used the fact that $1\leq |\omega^1|\leq n(\beta)$. Similarly, we have
\begin{align}
\label{high n}
|\lambda^{k+1}|_0 - p|\lambda^{k+1}|&=|\lambda^k|_{0}+|\omega^1|_0 - p(|\lambda^k|+|\omega^1|)\nonumber\\
&\leq (|\lambda^k|_0-p|\lambda^k|)+ |\omega^1|_0- p|\omega^1|\nonumber\\
&\leq n(\beta)+ |\omega^1|_0- p|\omega^1|&&(\textrm{By }\eqref{step k})\nonumber\\
& \leq n(\beta) + \Big(\frac{1}{2}-\frac{1}{2n(\beta)}\Big)|\omega^1|- p|\omega^1|&&(\textrm{By }\eqref{negative growth})\nonumber\\
&\leq n(\beta)&&(\textrm{Since }p\in \Big[\frac{1}{2}-\frac{1}{2n(\beta)},\frac{1}{2}+\frac{1}{2n(\beta)}\Big]).
\end{align}By \eqref{low n} and \eqref{high n} we have
\begin{equation}
\label{k+1 bound}
\big||\lambda^{k+1}|_0 - p|\lambda^{k+1}|\big|\leq n(\beta).
\end{equation}We also have $\lambda^{k+1}(x)\in\mathcal{O}_{\beta}$ by Proposition \ref{important prop}. If $|\lambda^k|_0< p|\lambda|_0$ we let $\lambda^{k+1}=(\lambda^k,\omega^0).$ One can then adapt the calculations done above to verify that \eqref{k+1 bound} still holds and $\lambda^{k+1}(x)\in\mathcal{O}_{\beta}$. This completes our inductive step.
\\

Clearly we can repeat step $k+1$ indefinitely. In doing so we obtain an infinite sequence $\lambda=(\lambda_i)\in\{T_{0},T_{1}\}^{\mathbb{N}}$. Since $\lambda^k(x)\in\mathcal{O}_{\beta}$ for each $k\in\mathbb{N},$ it follows that $\lambda\in\Omega_{\beta}(x).$ It remains to check that $\lambda$ has the correct frequency of maps. Lemma \ref{Bijection lemma} will then give us our desired element of $\Sigma_{\beta}(x)$.

For any $n\in\mathbb{N}$ consider the quantity $|(\lambda_i)_{i=1}^{n}|_0/n.$ For each $n$ there exists $k_n$ such that $|\lambda^{k_n}|\leq n < |\lambda^{k_{n}+1}|.$ By \eqref{step k} we have
\begin{equation}
\label{p upper}
\frac{|(\lambda_i)_{i=1}^{n}|_0}{n}\leq \frac{|\lambda^{k_n+1}|_{0}}{|\lambda^{k_n}|}\leq \frac{p|\lambda^{k_n+1}|+n(\beta)}{|\lambda^{k_n}|},
\end{equation}
and
\begin{equation}
\label{p lower}
\frac{p|\lambda^{k_n}|-n(\beta)}{|\lambda^{k_n+1}|}\leq \frac{|\lambda^{k_n}|_{0}}{|\lambda^{k_n+1}|}\leq \frac{|(\lambda_i)_{i=1}^{n}|_0}{n}.
\end{equation}
Importantly $|\lambda^{k_n+1}|-|\lambda^{k_n}|\leq n(\beta).$ Therefore as $n\to\infty$ the right hand side of \eqref{p upper} converges to $p$ and the left hand side of \eqref{p lower} converges to $p$. Therefore
$$\lim_{n\to\infty}\frac{|(\lambda_i)_{i=1}^{n}|_0}{n}=p$$ as required.

\end{proof}
Using Proposition \ref{freq prop} we obtain Theorem \ref{hybrid theorem} almost immediately. For completion we include a proof of this theorem.

\begin{proof}[Proof of Theorem \ref{hybrid theorem}]
Let $\beta\in(1,\frac{1+\sqrt{5}}{2})$ and $x\in [-2c,2c].$ Where $c$ is as in Proposition \ref{freq prop}. By Proposition \ref{freq prop} there exists $(\epsilon_i)\in\{-1,1\}^{\mathbb{N}}$ such that $x=\sum_{i=1}^{\infty}\epsilon_i\beta^{-i},$ $$\lim_{n\to \infty} \frac{\#\{1\leq i \leq n:\epsilon_i=1\}}{n}=\frac{1+x}{2}\textrm{ and }\lim_{n\to \infty} \frac{\#\{1\leq i \leq n:\epsilon_i=-1\}}{n}=\frac{1-x}{2}.$$ Therefore
\begin{align*}
\lim_{n\to\infty}\frac{1}{n}\sum_{i=1}^{n}\epsilon_i&=\lim_{n\to\infty}\Big(\frac{\#\{1\leq i \leq n:\epsilon_i=1\}}{n}- \frac{\#\{1\leq i \leq n:\epsilon_i=-1\}}{n}\Big)\\
&=\frac{1+x}{2}-\frac{1-x}{2}\\
&=x.
\end{align*}Consequently $(\epsilon_i)$ is a hybrid expansion of $x$.
\end{proof}

The following proposition implies statement $2$ from Theorem \ref{frequency theorem}. It is in fact a slightly stronger result. Before stating this proposition we introduce some notation. Given $(\epsilon_i)\in\{0,1\}^{\mathbb{N}}$ we define $$L(\epsilon_i):=\Big\{p\in[0,1]: p \textrm{ is an accumulation point of }\, \frac{\#\{1\leq i \leq n:\epsilon_i=0\}}{n}\Big\}.$$

\begin{proposition}
\label{no frequency prop}
Let $\beta\in(1,\frac{1+\sqrt{5}}{2}).$ There exists $c=c(\beta)>0$ such that for any $x\in(0,\frac{1}{\beta-1}),$ there exists a sequence $(\epsilon_i)\in\{0,1\}^{\mathbb{N}}$ such that $(\epsilon_i)$ is $\beta$-expansion of $x$ and $$\Big[\frac{1}{2}-c,\frac{1}{2}+c\Big]\subseteq L(\epsilon_i).$$
\end{proposition}

\begin{proof}
Let $\beta\in(1,\frac{1+\sqrt{5}}{2})$ and $x\in(0,\frac{1}{\beta-1})$. Just as in Proposition \ref{freq prop} there is no loss of generality in assuming that $x\in\mathcal{O}_{\beta}.$ We also let $J:=[\frac{1}{2}-\frac{1}{2n(\beta)},\frac{1}{2}+\frac{1}{2n(\beta)}]$ be as in Proposition \ref{freq prop}. Let $D\subseteq J$ be a countable dense subset of $J$ consisting of elements of the interior of $J$. This interior condition will be useful in our proof. Now let $(y_k)$ be a sequence consisting of elements of $D$ such that each element of $D$ appears infinitely often. We will show that there exists $(\lambda_i)\in\Omega_{\beta}(x),$ such that for each $k\in \mathbb{N}$ there exists $n_{k}\in\mathbb{N}$ for which we have
\begin{equation}
\label{subsequence equation}
\big||(\lambda_i)_{i=1}^{n_{k}}|_{0}-y_kn_{k}\big|\leq n(\b).
\end{equation}Here $n(\beta)$ is as in Proposition \ref{important prop}. The sequence $(n_{k})$ we construct will be strictly increasing. Since each element of $D$ appears infinitely often in $(y_k),$ and $D$ is dense in $J,$ it will follow from \eqref{subsequence equation} and Lemma \ref{Bijection lemma} that there exists $(\epsilon_i)\in \Sigma_{\b}(x)$ such that $J \subseteq L(\epsilon_i).$
\\

\noindent \textbf{Step $1$. } Suppose $y_1\in[1/2,1/2+(2n(\beta))^{-1}],$ then we apply $\omega^0$ to $x$. Here $\omega^0$ is the map guaranteed by Proposition \ref{important prop}. We let $\lambda^1=\omega^0$ and observe that
\begin{equation*}
0\leq \Big(\frac{1}{2}+\frac{1}{2n(\beta)}\Big) |\lambda^1|- y_1|\lambda| \leq |\lambda^1|_0-y_1|\lambda^1|\leq |\lambda^1|_0\leq n(\beta).
\end{equation*}
The second inequality follows from \eqref{positive growth}. The last inequality is a consequence of Proposition \ref{important prop}. Clearly we have
\begin{equation}
\label{step1 subsequence}
 \big||\lambda^1|_0-y_1|\lambda^1|\big|\leq n(\beta)
\end{equation} and $\lambda^1(x)\in\mathcal{O}_{\beta}$ by Proposition \ref{important prop}. Similarly, if $y_1\in [1/2-(2n(\beta))^{-1},1/2],$ then we let $\lambda^1=\omega^1$ and obtain \eqref{step1 subsequence} and $\lambda^1(x)\in\mathcal{O}_{\beta}$.
\\

\noindent \textbf{Step $k+1$. } Assume we have constructed $\lambda^{k}\in \{T_{0},T_1\}^{*}$  and $n_1,\ldots, n_{k}\in\mathbb{N}$ such that
\begin{equation}
\label{close equation}
\big||(\lambda_i^k)_{i=1}^{n_j}|_0-y_jn_j\big|\leq n(\beta),
\end{equation} for all $1\leq j \leq k$ and $\lambda^{k}(x)\in\mathcal{O}_{\beta}$. We now construct $\lambda^{k+1}$ and $n_{k+1}$ so that $\lambda^{k+1}$ satisfies \eqref{close equation} for all $1\leq j \leq k+1$ and $\lambda^{k+1}(x)\in\mathcal{O}_{\beta}$.

Without loss of generality we may assume that $n_{k}=|\lambda^k|.$ Consider the quantity $$|\lambda^k|_0- y_{k+1}n_{k}.$$ This term is either positive or negative. Let us assume it is positive. In which case let $\lambda^{(k,1)}=(\lambda^k,\omega^1),$ where $\omega^1$ is the $\omega^1$ guaranteed by Proposition \ref{important prop}. Then $\lambda^{(k,1)}(x)\in\mathcal{O}_{\beta}$ and
\begin{align*}
|\lambda^{(k,1)}|_0-y_{k+1}|\lambda^{(k,1)}|&=|\lambda^{k}|_0+|\omega^1|_0-y_{k+1}(|\lambda^k|+|\omega^1|)\\
& =|\lambda^{k}|_0-y_{k+1}|\lambda^{k}|+|\omega^1|_0-y_{k+1}|\omega^1|\\
&\leq |\lambda^{k}|_0-y_{k+1}|\lambda^{k}|+ \Big(\frac{1}{2}-\frac{1}{2n(\beta)}-y_{k+1}\Big)|\omega^1|&&(\textrm{By } \eqref{negative growth})\\
&\leq |\lambda^{k}|_0-y_{k+1}|\lambda^{k}|+ \Big(\frac{1}{2}-\frac{1}{2n(\beta)}-y_{k+1}\Big).
\end{align*}
Combining the first and the last line we see that
$$|\lambda^{(k,1)}|_0-y_{k+1}|\lambda^{(k,1)}|\leq  |\lambda^{k}|-y_{k+1}|\lambda^{k}|+ \Big(\frac{1}{2}-\frac{1}{2n(\beta)}-y_{k+1}\Big).$$ As this point we ask whether $$|\lambda^{(k,1)}|_0-y_{k+1}|\lambda^{(k,1)}|$$ is positive or negative. If it is negative then
\begin{align*}
0\geq |\lambda^{(k,1)}|_0-y_{k+1}|\lambda^{(k,1)}|&=|\lambda^{k}|_0-y_{k+1}|\lambda^{k}|+|\omega^1|_0-y_{k+1}|\omega^1|\\
&\geq |\omega^1|_0-y_{k+1}|\omega^1|&&(\textrm{Since }|\lambda^k|_0\geq y_{k+1}n_{k})\\
&\geq -y_{k+1}|\omega^1|\\
&\geq -n(\beta)&&(\textrm{Since } |\omega^1|\leq n(\beta)).
\end{align*}In which case we satisfy $$\big||\lambda^{(k,1)}|_0-y_{k+1}|\lambda^{(k,1)}|\big|\leq n(\beta)$$ and $\lambda^{(k,1)}(x)\in\mathcal{O}_{\beta}.$ At this point we stop and let $\lambda^{k+1}=\lambda^{(k,1)}$. If $|\lambda^{(k,1)}|_0-y_{k+1}|\lambda^{(k,1)}|$ is positive then we apply $\omega^{1}$ to $\lambda^{(k,1)}(x)$ and let $\lambda^{(k,2)}=(\lambda^{(k,1)},\omega^1).$ Then $\lambda^{(k,2)}(x)\in\mathcal{O}_{\beta}$ and by the same arguments used above we can show that
$$|\lambda^{(k,2)}|_0-y_{k+1}|\lambda^{(k,2)}|\leq  |\lambda^{(k,1)}|_0-y_{k+1}|\lambda^{(k,1)}|+ \Big(\frac{1}{2}-\frac{1}{2n(\beta)}-y_{k+1}\Big).$$ If $|\lambda^{(k,2)}|_0-y_{k+1}|\lambda^{(k,2)}|$ is negative, then by repeating the arguments given above it can be shown that $$\big||\lambda^{(k,2)}|_0-y_{k+1}|\lambda^{(k,2)}|\big|\leq n(\beta)$$ and $\lambda^{(k,2)}(x)\in\mathcal{O}_{\beta}$. In which case we stop and take $\lambda^{k+1}=\lambda^{(k,2)}.$ If $|\lambda^{(k,2)}|_0-y_{k+1}|\lambda^{(k,2)}|$ is positive then we let $\lambda^{(k,3)}=(\lambda^{(k,2)},\omega^1)$ and consider $|\lambda^{(k,3)}|_0-y_{k+1}|\lambda^{(k,3)}|.$ If this term is negative then our algorithm terminates and we take $\lambda^{k+1}=\lambda^{(k,3)},$  if not we consider $\lambda^{(k,4)}$ and so on. Each time our algorithm repeats we obtain a sequence $\lambda^{(k,j+1)}$ such that
\begin{equation}
\label{drop equation2}
|\lambda^{(k,j+1)}|_0-y_{k+1}|\lambda^{(k,j+1)}|\leq  |\lambda^{(k,j)}|-y_{{k+1}}|\lambda^{(k,j)}|+ \Big(\frac{1}{2}-\frac{1}{2n(\beta)}-y_{k+1}\Big),
\end{equation} and $\lambda^{(k,j+1)}(x)\in\mathcal{O}_{\beta}$. Repeatedly applying \eqref{drop equation2} we obtain
\begin{equation}
\label{j+1 drop equation}
|\lambda^{(k,j+1)}|_0-y_{k+1}|\lambda^{(k,j+1)}|\leq  |\lambda^{k}|_0-y_{k+1}|\lambda^{k}|+ (j+1)\Big(\frac{1}{2}-\frac{1}{2n(\beta)}-y_{k+1}\Big).
\end{equation}Since $y_{k+1}$ is in the interior of $J$ it follows that
$\frac{1}{2}-\frac{1}{2n(\beta)}-y_{k+1}<0$. Consequently, there must exists $j\in\mathbb{N}$ such that $$|\lambda^{(k,j+1)}|_0-y_{k+1}|\lambda^{(k,j+1)}|\leq 0< |\lambda^{(k,j)}|_0-y_{k+1}|\lambda^{(k,j)}|.$$ At which point it can be shown that $$\big||\lambda^{(k,j+1)}|_0-y_{k+1}|\lambda^{(k,j+1)}|\big|\leq n(\beta)$$ and $\lambda^{(k,j+1)}(x)\in\mathcal{O}_{\beta}$. Taking $\lambda^{k+1}=\lambda^{(k,j+1)}$ and $n_{k+1}=|\lambda^{k+1}|,$ we see that we satisfy \eqref{close equation} for $1\leq i \leq k+1$ and $\lambda^{k+1}(x)\in\mathcal{O}_{\beta}$. The case where the initial quantity  $$|(\lambda_i)_{i=1}^{n_{k}}|_0- y_{k+1}n_{k}$$ is negative is handled similarly. In this case we keep applying $\omega^0$ until we see a sign change. Thus we have completed our inductive step.
\\

Repeatedly applying step $k+1$ yields an infinite limit sequence $\lambda\in\Omega_{\beta}(x)$. Since \eqref{close equation} holds for each $\lambda^k$ it follows that \eqref{subsequence equation} is satisfied by $\lambda$ and we have proved our result.

\end{proof}

Statements $2$ and $3$ from Theorem \ref{frequency theorem} follow from Proposition \ref{freq prop} and Proposition \ref{no frequency prop}. Statement $1$ of this theorem for the parameter space $(1,\frac{1+\sqrt{5}}{2})$ follows from Proposition \ref{freq prop}. Now we prove Statement $1$ from Theorem \ref{frequency theorem} for the parameter space $[\frac{1+\sqrt{5}}{2},\beta_{KL}).$

\begin{proof}[Proof of statement $1$ from Theorem \ref{frequency theorem} within the parameter space $[\frac{1+\sqrt{5}}{2},\beta_{KL})$.] Let us start by fixing $\beta\in[\frac{1+\sqrt{5}}{2},\beta_{KL}).$ Then there exists $n\in\mathbb{N}$ such that $\beta\in[\beta_n,\beta_{n+1}).$ Recall that the sequences $(\b_n)$ is defined in Section \ref{Section3}.

For each $1\leq i \leq n+1$ let
$$\mathcal{I}_i:=[\pi_{\beta}((\tau^{i})^{\infty}),\pi_{\beta}((\overline{\tau^i})^{\infty})].$$ Where the $\tau^i$ are the finite sequences appearing in the construction of the Thue-Morse sequence in Section \ref{Section3}.
By \eqref{switch1} and \eqref{switch2} we know that these intervals are well defined and
\begin{equation}
\label{nested}
\mathcal{I}_{n+1}\subseteq S_{\beta}\subseteq \mathcal{I}_{n}\subseteq \cdots \subseteq \mathcal{I}_{1}=\mathcal{O}_{\beta}.
\end{equation} Moreover, by \eqref{switch1} and \eqref{switch2} we know that $\mathcal{I}_{n+1}$ is a proper subinterval of $S_{\beta}.$ Therefore $$[T_1(\pi_{\beta}((\tau^{n+1})^{\infty}), T_0(\pi_{\beta}((\overline{\tau^{n+1}})^{\infty}))]\subseteq \Big(0,\frac{1}{\beta-1}\Big).$$ It follows from this observation, Lemma \ref{basic lemma}, and the expansivity of the maps $T_0$ and $T_1$ that if $x\in [\pi_{\beta}((\tau^{i})^{\infty}, \pi_{\beta}((\overline{\tau^i})^{\infty})],$ then $T_{0}(x)$ and $T_{1}(x)$ can both be mapped back into $\mathcal{O}_{\beta}$ using at most $l(\beta)\in\mathbb{N}$ iterations of $T_1$ or $T_0$ respectively. Importantly $l(\beta)$ is a natural number that only depend upon $\beta$.

Now let us fix $x\in(0,\frac{1}{\beta-1}).$ Without loss of generality we may assume $x\in\mathcal{O}_{\beta}$. If $x$ is a preimage of an endpoint of an $\mathcal{I}_{i},$ then by Lemma \ref{tau normal} we know that $x$ has a simply normal expansion. Therefore to prove our result it suffices to consider those $x$ that are not preimages of an endpoint of an $\mathcal{I}_{i}.$ We now give an algorithm which shows how one can construct a simply normal expansion for any $x$ satisfying this condition.
\\

\noindent \textbf{Step $1$.}  By \eqref{nested} and our assumption that $x$ is not a preimage of an endpoint of an $\mathcal{I}_{i},$ we know that $x$ satisfies one of the following:
$$x\in(\pi_{\beta}((\tau^{n+1})^{\infty}),\pi_{\beta}((\overline{\tau^{n+1}})^{\infty})),\, x\in (\pi_{\beta}((\tau^{i})^{\infty}),\pi_{\beta}((\tau^{i+1})^{\infty}))$$ or $$x\in(\pi_{\beta}((\overline{\tau^{i+1}})^{\infty}),\pi_{\beta}((\overline{\tau^{i}})^{\infty}))$$for some $1\leq i \leq n$. If $x\in (\pi_{\beta}((\tau^{n+1})^{\infty}),\pi_{\beta}((\overline{\tau^{n+1}})^{\infty}))$ apply $T_0$ to $x$ and then $T_1$ until $(T_1^j\circ T_0)(x)\in\mathcal{O}_{\beta}.$ By our previous remarks we know that $j\leq l(\beta)$. Let $\lambda^1=(T_0,(T_1)^j)$. Then  $$\big||(\lambda^1)_{i=1}^m|_0-|(\lambda^1)_{i=1}^m|_{1}|\big|\leq l(\beta)$$ for all $1\leq m \leq |\lambda^1|.$

If $x\in (\pi_{\beta}((\tau^{i})^{\infty}),\pi_{\beta}((\tau^{i+1})^{\infty})),$ then we repeatedly apply $\kappa^i$ to $x$ until $x$ is mapped into $(\pi_{\beta}((\tau^{i+1})^{\infty}), \pi_{\beta}((\overline{\tau^{i+1}})^{\infty})).$ This follows from \eqref{flip equation}, our assumption that $x$ is not a preimage of an endpoint of an $\mathcal{I}_i$, and the fact that $\pi_{\beta}((\tau^{i})^{\infty})$ is the unique fixed point of $\kappa^i$ and $\kappa^i$ scales distances by some factor strictly greater than one. Likewise, if $x\in(\pi_{\beta}((\overline{\tau^{i+1}})^{\infty}),\pi_{\beta}((\overline{\tau^{i}})^{\infty})),$ then by repeatedly applying $\overline{\kappa}^i$ the point $x$ is mapped into $(\pi_{\beta}((\tau^{i+1})^{\infty}), \pi_{\beta}((\overline{\tau^{i+1}})^{\infty})).$ In either case we let $x^1$ denote the image point of $x$ in $(\pi_{\beta}((\tau^{i+1})^{\infty}), \pi_{\beta}((\overline{\tau^{i+1}})^{\infty})).$ If $x^1\notin (\pi_{\beta}((\tau^{n+1})^{\infty}),\pi_{\beta}((\overline{\tau^{n+1}})^{\infty}))$ then
\begin{equation}
\label{dumbdumb1}
x^1\in(\pi_{\beta}((\tau^{i_1})^{\infty}),\pi_{\beta}((\tau^{i_1+1})^{\infty}))\cup(\pi_{\beta}((\overline{\tau^{i_1+1}})^{\infty}),\pi_{\beta}((\overline{\tau^{i_1}})^{\infty}))
\end{equation}for some $i_1>i.$

Repeating the above argument, we see that if $x^1\notin (\pi_{\beta}((\tau^{n+1})^{\infty}),\pi_{\beta}((\overline{\tau^{n+1}})^{\infty})),$ then by repeatedly apply either $\kappa^{i_1}$ or $\overline{\kappa}^{i_1}$ to $x^1$ our orbit is eventually mapped into $(\pi_{\beta}((\tau^{i_1+1})^{\infty}),\pi_{\beta}((\overline{\tau^{i_1+1}}^{\infty}))).$ We can repeat this procedure until our orbit is eventually mapped in to $(\pi_{\beta}((\tau^{n+1})^{\infty}),\pi_{\beta}((\overline{\tau^{n+1}})^{\infty})).$ Therefore we may conclude that there exists a sequence of maps $\kappa^{*}\in\{T_0,T_1\}^{*}$ such that $$\kappa^{*}(x)\in (\pi_{\beta}((\tau^{n+1})^{\infty}),\pi_{\beta}((\overline{\tau^{n+1}})^{\infty})).$$ Moreover, the sequence of maps $\kappa^*$ is the concatenation of finitely many blocks all of length at most $2^n$, where each of these blocks have the same number of $T_0$'s and $T_1$'s by Lemma \ref{tau normal}. Therefore $$|\kappa^{*}|_0=|\kappa^{*}|_1\textrm{ and }\big||(\kappa^{*})_{i=1}^m|_0-|(\kappa^{*})_{i=1}^m|_1\big|\leq 2^n$$ for all $1\leq m \leq |\kappa^{*}|$. We now apply $T_0$ to $\kappa^*(x)$ and then apply $T_1$ until $(T_1^j\circ T_0\circ\kappa^*)(x)\in\mathcal{O}_{\beta}.$ Let $\lambda^1=(\kappa^*,T_0,(T_1)^j)$. Then $\lambda^1(x)\in\mathcal{O}_{\beta}$ and $$\big||(\lambda^{1}_i)_{i=1}^m|_0-|(\lambda^{1}_i)_{i=1}^m|_1\big|\leq 2^n$$ if $1\leq m \leq |\kappa^{*}|.$ Moreover, $$\big||(\lambda^{1}_i)_{i=1}^m|_0-|(\lambda^{1}_i)_{i=1}^m|_1\big|\leq l(\beta)$$ if $|\kappa^{*}|< m\leq |\lambda^1|$ since $|\kappa^{*}|_0=|\kappa^{*}|_1$ and $j\leq l(\beta)$.

It follows from the above that we have constructed $\lambda^1\in\{T_0,T_1\}^*$ such that $\lambda^{1}(x)\in\mathcal{O}_{\beta},$
\begin{equation}
\label{one time}
\big||(\lambda^{1}_i)_{i=1}^m|_0-|(\lambda^{1})_{i=1}^m|_{1}|\big|\leq 2^n+l(\beta)
\end{equation} for all $1\leq m \leq |\lambda^1|,$ and
\begin{equation}
\label{two time}
\big||\lambda^1|_0-|\lambda^1|_1\big|\leq l(\beta).
\end{equation}
\\

\noindent \textbf{Step $k+1$. }Suppose we have constructed $\lambda^{k}\in \{T_0,T_1\}^{*}$ such that $\lambda^{k}(x)\in\mathcal{O}_{\beta},$
\begin{equation}
\label{n balanced}
\big|(\lambda^{k}_i)_{i=1}^m|_0-|(\lambda^{k})_{i=1}^m|_{1}\big|\leq 2^n+  l(\beta)
\end{equation} for all $1\leq m \leq |\lambda^k|,$ and
\begin{equation}
\label{balanced k}
\big||\lambda^{k}|_0-|\lambda^{k}|_1\big|\leq l(\beta).
\end{equation} We now show how to construct $\lambda^{k+1}$ satisfying $\lambda^{k+1}(x)\in\mathcal{O}_{\beta}$, \eqref{n balanced}, and \eqref{balanced k}. There are two cases to consider. Either $$|\lambda^{k}|_0-|\lambda^{k}|_1$$ is positive, or it is negative. Let us assume it is positive. The negative case is handled similarly. By the same argument used in Step $1,$ if $\lambda^{k}(x)\notin (\pi_{\beta}((\tau^{n+1})^{\infty}),\pi_{\beta}((\overline{\tau^{n+1}})^{\infty})),$ then there exists $\kappa^{*}\in\{T_{0},T_{1}\}^*$ such that $|\kappa^*|_0=|\kappa^*|_1$ and $(\kappa^*\circ\lambda^{k})(x)\in (\pi_{\beta}((\tau^{n+1})^{\infty}),\pi_{\beta}((\overline{\tau^{n+1}})^{\infty}))$. Moreover $\kappa^{*}$ is the concatenation of finitely many blocks of length at most $2^n,$ and each block has the same number of $T_{0}$'s and $T_{1}$'s. We then apply $T_0$ and $T_1$ until $(T_1^j\circ T_0\circ\kappa^*\circ\lambda^{k})(x)\in\mathcal{O}_{\beta}.$ At this point we set $\lambda^{k+1}=(\lambda^{k},\kappa^{*},T_0,T_1^{j}).$ Then
$$\big||(\lambda^{k+1}_i)_{i=1}^{m}|_0-|(\lambda^{k+1}_i)_{i=1}^{m}|_1\big|\leq 2^n+l(\beta)$$ if $1\leq m\leq |\lambda^{k}|$ by \eqref{n balanced}. If $|\lambda^{k}|< m\leq |\lambda^{k}|+|\kappa^{*}|$ then $$\big||(\lambda^{k+1}_i)_{i=1}^{m}|_0-|(\lambda^{k+1}_i)_{i=1}^{m}|_1\big|\leq 2^n+l(\beta).$$ This is a consequence of \eqref{balanced k} and the fact that $\kappa^*$ is the concatenation of finitely many blocks of length at most $2^n,$ where each block has the same number of $T_0$'s as $T_1$'s. If $ |\lambda^{k}|+|\kappa^{*}|< m\leq |\lambda^{k+1}|$ then
\begin{equation}
\label{big lad}
|(\lambda^{k+1}_i)_{i=1}^{m}|_0-|(\lambda^{k+1})_{i=1}^{m}|_{1}= |\lambda^{k}|_0-|\lambda^{k}|_{1}+ |\kappa^{*}|_0-|\kappa^{*}|_1+1
-(m-|\lambda^{k}|-|\kappa^{*}|-1).
\end{equation}Using the fact that $|\kappa^{*}|_0=|\kappa^{*}|_1$ and \eqref{balanced k}, we see that \eqref{big lad} implies $$|(\lambda^{k}_i)_{i=1}^{m}|_0-|(\lambda^{k}_i)_{i=1}^{m}|_{1}\leq l(\beta)+1$$ if $|\lambda^{k}|+|\kappa^{*}|< m\leq  |\lambda^{k+1}|$. Using the assumption $|\lambda^{k}|_0-|\lambda^{k}|_{1}$ is positive, along with $|\kappa^{*}|_0=|\kappa^{*}|_1$ and $ j\leq l(\beta),$ we see that \eqref{big lad} also implies
$$-l(\beta)\leq |(\lambda^{k}_i)_{i=1}^{m}|_0-|(\lambda^{k})_{i=1}^{m}|_{1}$$ if $|\lambda^{k}|+|\kappa^{*}|< m\leq |\lambda^{k+1}|.$ Therefore $$\big| |(\lambda^{k}_i)_{i=1}^{m}|_0-|(\lambda^{k})_{i=1}^{m}|_{1}\big|\leq 2^n+l(\beta)$$ if $|\lambda^{k}|+|\kappa^{*}|< m\leq |\lambda^{k+1}|.$ Moreover, since $j\geq 1$ we see that \eqref{big lad} implies $$|\lambda^{k+1}|_0-|\lambda^{k+1}|_1\leq |\lambda^{k}|_0-|\lambda^{k}|_1\leq l(\beta).$$ Therefore $\lambda^{k+1}(x)\in\mathcal{O}_{\beta}$ and $\lambda^{k+1}$ satisfies \eqref{n balanced} and \eqref{balanced k}. We have completed our inductive step when $\lambda^{k}(x)\notin (\pi_{\beta}((\tau^{n+1})^{\infty}),\pi_{\beta}((\overline{\tau^{n+1}})^{\infty}))$. When $\lambda^{k}(x)\in (\pi_{\beta}((\tau^{n+1})^{\infty}),\pi_{\beta}((\overline{\tau^{n+1}})^{\infty}))$ the construction of $\lambda^{k+1}$ is the same as above except we do not need to construct the sequence of maps $\kappa^{*}.$
\\

Repeating this procedure indefinitely gives rise to an infinite sequence $\lambda\in\Omega_{\beta}(x)$ such that
\begin{equation}
\label{final limit}
\big||(\lambda_i)_{i=1}^m|_0- |(\lambda_i)_{i=1}^m|_1\big|\leq 2^n+l(\beta)
\end{equation} for all $m\in\mathbb{N}$. It follows from \eqref{final limit} that within $\lambda$ the map $T_0$ appears with frequency $1/2$ and the map $T_1$ appears with frequency $1/2$. By Lemma \ref{Bijection lemma} there exists $(\epsilon_i)\in\Sigma_{\beta}(x)$ that is simply normal.

\end{proof}

\subsection{Proofs for Theorem \ref{exceptional theorem} and Theorem \ref{slow growth theorem}}
We now give a proof of Theorem \ref{exceptional theorem}.

\begin{proof}[Proof of Theorem \ref{exceptional theorem}]Recall from Lemma \ref{lexicographic lemma} that
\begin{equation}
\label{multi lex}
\widetilde{\mathcal{A}}_{\beta}=\Big\{(\epsilon_i)\in\{0,1\}^{\mathbb{N}}:(\overline{\alpha_i(q)})\prec(\epsilon_{n+i})\prec(\alpha_i(q))\textrm{ for all }n\in\mathbb{N}\Big\}.
\end{equation}Let $\beta_{n}$ be the unique positive solution to the equation $$x^{n+1}=x^{n}+x^{n-1}+\cdots+x+1$$ with modulus larger than $1$. The number $\beta_n$ is commonly referred to as the $n$-th multinacci number. Note that $\beta_n\nearrow 2$ as $n\to\infty$. It is a consequence of Lemma \ref{quasi greedy properties} that $$\alpha(\beta_n)=((1)^n,0)^{\infty}.$$ It follows from \eqref{multi lex} that
\begin{equation}
\label{dumbinclusion}
\Big\{(\epsilon_i)\in\{0,1\}^{\mathbb{N}}:(\epsilon_i) \textrm{ does not contain } n \textrm{ consecutive }0's \textrm{ or }1's\Big\}\subseteq \widetilde{\mathcal{A}}_{\beta_n}.
\end{equation}
Let $$W_n=\Big\{(\epsilon_i)\in\{0,1\}^n:|(\epsilon_i)|_1>|(\epsilon_i)|_0 \textrm{ and }(\epsilon_i)\neq (1)^{n}\Big\}.$$
Consider the case where $n=2k+1$. Any element of $\{0,1\}^{2k+1}$ satisfies either $|(\epsilon_i)|_1>|(\epsilon_i)|_0$ or $|(\epsilon_i)|_1>|(\epsilon_i)|_0$. It follows that
\begin{equation}
\label{count equation}
\#W_{2k+1}=2^{2k}-1.
\end{equation}
Let $T_{2k+1}:=W_{2k+1}^{\mathbb{N}}.$ Each element of $T_{2k+1}$ fails to be simply normal. This is because the number of $1$'s in each successive block of length $2k+1$ is at least $k$. What is more, any element of $T_{2k+1}$ cannot contain $2(2k+1)$ consecutive $0$'s or $1$'s. Therefore $T_{2k+1}\subseteq \widetilde{\mathcal{A}}_{\beta_{2(2k+1)}}$ by \eqref{dumbinclusion}. By Lemma \ref{inclusion lemma} we also know that $T_{2k+1}\subseteq \widetilde{A}_{\beta}$ for any $\beta\in(\beta_{2(2k+1)},2)$.

We now compute the Hausdorff dimension of the set $\pi_{\beta}(T_{2k+1})$ for $\beta\in(\beta_{2(2k+1)},2)$. Since every element of $T_{2k+1}$ fails to be simply normal and each element of $\pi_{\beta}(T_{2k+1})$ has a unique $\beta$-expansion, the Hausdorff dimension of $\pi_{\beta}(T_{2k+1})$ will give a lower bound for the Hausdorff dimension of those $x$ without a simply normal $\beta$-expansion.

Let us now fix $\beta\in(\beta_{2(2k+1)},2)$. Notice that $\pi_{\beta}(T_{2k+1})$ satisfies the similarity relation
\begin{equation}
\label{self similar}
\pi_{\beta}(T_{2k+1})=\bigcup_{(\epsilon_i)_{i=1}^{2k+1}\in W_{2k+1}^1}(T^{-1}_{\epsilon_1}\circ \cdots \circ T^{-1}_{\epsilon_{2k+1}})(\pi_{\beta}(T_{2k+1})).
\end{equation}Each map on the right hand side of \eqref{self similar} is a contracting similarity that scales by a factor $\beta^{-2k-1}$. Therefore $\pi_{\beta}(T_{2k+1})$ is a self-similar set. It is a consequence of each element of $\pi_{\beta}(T_{2k+1})$ having a unique $\beta$-expansion that the union in \eqref{self similar} is disjoint. Therefore $\pi_{\beta}(T_{2k+1})$ is a self-similar set and the IFS generating it satisfies the strong separation condition. The well known formula for the Hausdorff dimension of a self-similar set satisfying the strong separation condition, see for example \cite{Fal}, implies that $\dim_{H}(\pi_{\beta}(T_{2k+1}))$ satisfies $$1=\#W_{2k+1}\cdot\beta^{-(2k+1)\dim_{H}(\pi_{\beta}(T_{2k+1}))}.$$ Rearranging this equation and appealing to \eqref{count equation} we obtain
$$\dim_{H}( \pi_{\beta}(T_{2k+1}))= \frac{\log 2^{2k} -1}{\log \beta^{2k+1}}>\frac{\log 2^{2k} -1}{\log 2^{2k+1}}\geq \frac{2k-1}{2k+1}$$ for any $\beta\in(\beta_{2(2k+1)},2).$ Since $k$ is arbitrary it follows that $$\lim_{\beta\nearrow 2}\dim_{H}\Big(\Big\{x: x \textrm{ has no simply normal } \beta\textrm{-expansion}\Big\}\Big)=1.$$

\end{proof}

We now give a proof of Theorem \ref{slow growth theorem}. In the proof of this theorem we will require the interpretation of Proposition \ref{important prop} when the digit set is $\{-1,1\}$ not $\{0,1\}$.

\begin{proof}[Proof of Theorem \ref{slow growth theorem}]
Let us start by fixing $\beta\in(1,\frac{1+\sqrt{5}}{2})$ and $x\in(\frac{-1}{\beta-1},\frac{1}{\beta-1}).$ Suppose $f:\mathbb{N}\to\mathbb{R}$ is a strictly increasing function satisfying $$\lim_{n\to\infty} f(n)=\infty$$ and
\begin{equation}
\label{explicit decay}
f(n+1)-f(n)<\frac{\beta-1}{n(\beta)}
\end{equation} for all $n\geq N,$ where $N$ is some large natural number. Here $n(\beta)$ is as in the statement of Proposition \ref{important prop}. We now describe an algorithm which yields an expansion of $x$ with the desired properties.
\\

\noindent \textbf{Step $1$.} The first step in our construction is to pick an arbitrary sequence $\lambda^0\in\{T_{-1},T_1\}^{N}$ such that $\lambda^0(x)\in \mathcal{O}_{\beta}$. We can do this by Lemma \ref{basic lemma} and replacing our value of $N$ with a larger value if necessary. At this point we consider the sign of the quantity
\begin{equation}
\label{sign quantity}
|\lambda^0|_1-|\lambda^0|_{-1} - f(N)x.
\end{equation}Let us start by assuming this quantity is negative. Since $\lambda^0(x)\in \mathcal{O}_{\beta},$ we can apply Proposition \ref{important prop} to assert that there exists $\omega^{1}$ satisfying $(\omega^{1}\circ\lambda^0)(x)\in\mathcal{O}_{\beta},$ $|\omega^{1}|\leq n(\beta)$, and $|\omega^{1}|_{-1}<|\omega^{1}|_{1}.$ Let $\lambda^{0,1}=(\lambda^0,\omega^{1})$. Consider the quantity
$$|\lambda^{0,1}|_1-|\lambda^{0,1}|_{-1} - f(|\lambda^{0,1}|)x.$$ If this term is greater than or equal to zero then there has been a sign change. In which case let $\lambda^1=\lambda^{0,1}$ and observe
\begin{align*}
0&\leq  |\lambda^{1}|_1-|\lambda^{1}|_{-1} - f(|\lambda^{1}|)x\\
&=(|\lambda^{0}|_1-|\lambda^{0}|_{-1})+(|\omega^{1}|_1-|\omega^{1}|_{-1})-x\Big(f(|\lambda^0|)+\sum_{i=0}^{|\lambda^{1}|-|\lambda^0|-1}f(|\lambda^{1}|-i)-f(|\lambda^{1}|-i-1)\Big)\\
&=(|\lambda^{0}|_1-|\lambda^{0}|_{-1}-xf(|\lambda^0|))+(|\omega^{1}|_1-|\omega^{1}|_{-1})-x\Big(\sum_{i=0}^{|\lambda^{1}|-|\lambda^0|-1}f(|\lambda^{1}|-i)-f(|\lambda^{1}|-i-1)\Big)\\
&\leq 0 +n(\beta) + \Big|\frac{xn(\beta)(\beta-1)}{n(\beta)}\Big|\\
&\leq n(\beta)+1. &&
\end{align*}In the penultimate line we have used \eqref{explicit decay} and the fact that $|\omega^{1}|\leq n(\beta)$. Summarising the above, we have shown that
\begin{equation}
\label{just below equation}
0\leq |\lambda^{1}|_1-|\lambda^{1}|_{-1} - f(|\lambda^{1}|)x\leq n(\beta)+1
 \end{equation}if there has been a sign change. Suppose we do not see a sign change. By Proposition \ref{important prop} there exists $\omega^{1}$ satisfying $|\omega^1|\leq n(\beta),$ $|\omega^{1}|_{-1}<|\omega^{1}|_{1}$ and $\lambda^{0,2}(x)\in\mathcal{O}_{\beta},$ where $\lambda^{0,2}=(\lambda^{0,1},\omega^{1})$. We consider the quantity $$|\lambda^{0,2}|_1-|\lambda^{0,2}|_{-1} - f(|\lambda^{0,2}|)x,$$ and ask whether there has been a sign change. If there has been a sign change we let $\lambda^1=\lambda^{0,2}$. If not we concatenate $\lambda^{0,2}$ with the $\omega^{1}$ guaranteed by Proposition \ref{important prop}. We repeat this procedure and obtain a sequence $(\lambda^{0,j}).$ Note that for all $j\geq 1$ we have
\begin{equation}
\label{boring}
(|\lambda^{0,j+1}|_1-|\lambda^{0,j+1}|_{-1})-(|\lambda^{0,j}|_1-|\lambda^{0,j}|_{-1})\geq 1.
\end{equation} What is more,
\begin{align}
|xf(|\lambda^{0,j+1}|)-xf(|\lambda^{0,j}|)|&=\Big|x\Big(\sum_{i=0}^{|\lambda^{0,j+1}|-|\lambda^{0,j}|-1}f(|\lambda^{0,j+1}|-i)-f(|\lambda^{0,j+1}|-i-1)\Big)\Big|\nonumber\\
&<\big| \frac{x(\beta-1)n(\beta)}{n(\beta)}\big|\nonumber\\
&<c.\label{boring2}
\end{align} For some $c<1$ depending on $x$. Combining equations \eqref{boring} and \eqref{boring2} we obtain
\begin{equation}
\label{first drop}
|\lambda^{0,j+1}|_1-|\lambda^{0,j+1}|_{-1}-xf(|\lambda^{0,j+1}|)> |\lambda^{0,j}|_1-|\lambda^{0,j}|_{-1}-xf(|\lambda^{0,j}|)+(1-c).
\end{equation}
Repeatedly applying $\eqref{first drop}$ we observe that
\begin{equation}
\label{mth drop}
|\lambda^{0,j}|_1-|\lambda^{0,j}|_{-1}-xf(|\lambda^{0,j}|)> |\lambda^{0}|_1-|\lambda^{0}|_{-1}-xf(|\lambda^{0}|)+j(1-c).
\end{equation}Since $(1-c)>0$ equation \eqref{mth drop} implies that we must observe a sign change after finitely many steps. Let $\lambda^1=\lambda^{0,j^*}$ where $j^*$ is the smallest $j^*\in\mathbb{N}$ such that $$|\lambda^{0,j^*}|_1-|\lambda^{0,j^*}|_{-1} - f(|\lambda^{0,j^*}|)x\geq 0.$$ Repeating the calculation done above in the derivation of \eqref{just below equation}, it can be shown that $\lambda^1$ satisfies $$0\leq |\lambda^{1}|_1-|\lambda^{1}|_{-1} - f(|\lambda^1|)x\leq n(\beta)+1.$$ Moreover $\lambda^{1}(x)\in\mathcal{O}_{\beta}.$

The case where \eqref{sign quantity} is positive is dealt with slightly differently. This time we concatenate with $\omega^{-1}$'s until we observe a sign change. The sign change is guaranteed because the $|\lambda^{0,j+1}|_1-|\lambda^{0,j+1}|_{-1}$ term will be decreasing and the $f(|\lambda^{0,j}|)x$ term will be varying monotonically at a slower rate. By a simple calculation, when we observe a sign change we will have constructed a sequence $\lambda^1\in\{T_{-1},T_1\}^{*}$ such that $\lambda^{1}(x)\in\mathcal{O}_{\beta}$ and
\begin{equation}
\label{just above equation}
-n(\beta)-1\leq |\lambda^{1}|_1-|\lambda^{1}|_{-1} - f(|\lambda^1|)x \leq 0.
\end{equation}

Combining \eqref{just below equation} and \eqref{just above equation}, we see that in either case we have constructed $\lambda^1\in\{T_{-1},T_{1}\}^{*}$ such that $\lambda^1(x)\in\mathcal{O}_{\beta}$ and
\begin{equation}
\label{C equation}
\big||\lambda^{1}|_1-|\lambda^{1}|_{-1}-f(|\lambda^1|)x\big| \leq n(\beta)+1.
\end{equation}
\\

\noindent \textbf{Step $k+1$. }Suppose we have constructed $\lambda^{k}\in\{T_{-1},T_{1}\}^{*}$ such that $\lambda^{k}(x)\in\mathcal{O}_{\beta}$ and
 \begin{equation}
\label{stepk equation}
\big||\lambda^{k}|_1-|\lambda^{k}|_{-1}-f(|\lambda^{k}|)x\big| \leq n(\beta)+1.
\end{equation}
We now show how to construct $\lambda^{k+1}$ such that $\lambda^{k+1}(x)\in\mathcal{O}_{\beta}$ and \eqref{stepk equation} is still satisfied. Consider the term appearing within the modulus signs in \eqref{stepk equation}, if this term is positive then we concatenate $\lambda^{k}$ with $\omega^{-1},$ if it is negative then we concatenate $\lambda^{k}$ with $\omega^1$. Here $\omega^{-1}$ and $\omega^1$ are as in Proposition \ref{important prop}. In either case we call our new sequence $\lambda^{k+1}$. By Proposition \ref{important prop} we have $\lambda^{k+1}(x)\in\mathcal{O}_{\beta}.$ Moreover repeating the arguments given above one can show that
\begin{equation*}
\big||\lambda^{k+1}|_1-|\lambda^{k+1}|_{-1}-f(|\lambda^{k+1}|)x \big|\leq n(\beta)+1.
\end{equation*}This completes our inductive step.
\\

Note that it is a consequence of our construction that
\begin{equation}
\label{balancer}
|\lambda^{k+1}|-|\lambda^k|\leq n(\beta)
\end{equation} for all $k\geq 1$. Now let $\lambda\in \Omega_{\beta}(x)$ denote the infinite sequence of transformations we obtain by repeating step $k+1$ indefinitely. It is a consequence of \eqref{explicit decay}, \eqref{stepk equation} and \eqref{balancer} that
\begin{equation}
\label{balancer1}
\big||(\lambda_i)_{i=1}^n|_1-|(\lambda_i)_{i=1}^n|-f(n)x\big|\leq C(\beta)
\end{equation}for all $n\geq |\lambda^1|.$ Where $C(\beta)$ is a constant that only depends upon $\beta$.

Let $(\epsilon_i)$ be the element of $\Sigma_{\beta}(x)$ obtained by applying the bijection in Lemma \ref{Bijection lemma} to $\lambda$. Then using the simple identity $$\sum_{i=1}^n\epsilon_i=|(\lambda_i)_{i=1}^n|_1-|(\lambda_i)_{i=1}^n|$$ and \eqref{balancer1} we obtain
\begin{equation}
\label{C'' equation}
\big| \sum_{i=1}^{n}\epsilon_i-f(n)x\big| \leq C(\beta)
\end{equation}for all $n\geq |\lambda^1|$. Since $f(n)\to \infty$ we must have $$\lim_{n\to\infty}\frac{1}{f(n)}\sum_{i=1}^{n}\epsilon_i\to x$$ as required.
\end{proof}

\section{Self-affine sets with non-empty interior}
\label{affine section}
In this section we prove Theorem \ref{Affine theorem}. As we will see in Section \ref{Section Explicit calculations}, one can explicitly calculate a lower bound for the value of $\delta$ appearing in the statement of this theorem. We start by introducing some notation and proving a technical proposition.

Note that if $x\in\widetilde{\mathcal{O}}_{\beta}$ then $\omega^1(x)\in\widetilde{\mathcal{O}}_{\beta}$ and $\omega^{-1}(x)\in\widetilde{\mathcal{O}}_{\beta}$. Where $\omega^1$ and $\omega^{-1}$ are as in Proposition \ref{important prop}. Applying Proposition \ref{important prop} again, we know that there exists $\omega^{1'}$ and $\omega^{-1'}(x)$ such that $(\omega^{1'}\circ \omega^{1})(x)\in\widetilde{\mathcal{O}}_{\beta}$ and $(\omega^{-1'}\circ \omega^{-1})(x)\in\widetilde{\mathcal{O}}_{\beta}.$ Clearly we can apply Proposition \ref{important prop} repeatedly to $x$ and its successive images. By an abuse of notation, we let $(\omega^{1}_i)_{i=1}^{\infty}\in\widetilde{\Omega}_{\beta}(x)$ denote the infinite sequence we obtain by repeatedly applying $\omega^1$. Similarly $(\omega^{-1}_i)_{i=1}^{\infty}\in\widetilde{\Omega}_{\beta}(x)$ will denote the infinite sequence we obtain by repeatedly applying $\omega^{-1}$. Moreover, given an $x\in \widetilde{\mathcal{O}}_{\beta},$ a sequence whose entries consist of $\omega^{-1}$'s and $\omega^1$'s will represent the element of $\widetilde{\Omega}_{\beta}(x)$ obtained by repeatedly applying Proposition \ref{important prop} and applying $\omega^{-1}$ and $\omega^1$ in accordance with the order they appear in that sequence. In what follows we let $B:\{T_-1,T_1\}^{\mathbb{N}}\to \{-1,1\}^{\mathbb{N}}$ be the map which sends $(T_{\epsilon_i})$ to $(\epsilon_i)$. Note that $B$ is a bijection between $\widetilde{\Omega}_{\beta}(x)$ and $\widetilde{\Sigma}_{\beta}(x)$ by Lemma \ref{Bijection lemma}. By an abuse of notation we also let $B$ denote the map $B:\{T_-1,T_1\}^{n}\to \{-1,1\}^{n}$ which sends $(T_{\epsilon_i})_{i=1}^n$ to $(\epsilon_i)_{i=1}^n.$

Returning to our self-affine sets one can verify that $\Lambda_{\b_1,\b_2,\b_3}$ has the following closed from
$$\Lambda_{\b_1,\b_2,\b_3}=\Big\{\Big(\sum_{i=1}^{\infty}\frac{\epsilon_i}{\beta_1^i},\sum_{i=1}^{\infty}\frac{\epsilon_i}{\beta_2^{|(\epsilon_1)_{j=1}^i|_{-1}}\beta_3^{|(\epsilon_1)_{j=1}^i|_{1}}}\Big): (\epsilon_i)\in\{-1,1\}^\mathbb{N}\Big\}.$$ In what follows we let $\pi_{\beta_2,\beta_3}:\{-1,1\}^{\mathbb{N}}\to \mathbb{R}$ denote the map $$\pi_{\b_1,\b_2}((\epsilon_i))=\sum_{i=1}^{\infty}\frac{\epsilon_i}{\beta_2^{|(\epsilon_1)_{j=1}^i|_{-1}}\beta_3^{|(\epsilon_1)_{j=1}^i|_{1}}}.$$ The following equality holds for any $x\in [-\frac{1}{\beta_1-1},\frac{1}{\beta_1-1}]$
\begin{equation}
\label{fibre identification}
\pi_{\beta_2,\beta_3}(\widetilde{\Sigma}_{\b_1}(x))=\Lambda^x_{\beta_1,\b_2,\b_3}.
\end{equation} Equation \eqref{fibre identification} shows the connection between the set of $\beta_1$-expansions of a given $x$ and its vertical fibre. This connection is what allows us to prove Theorem \ref{Affine theorem}.

\begin{proposition}
\label{technical prop}
Let $\beta_1\in (1,\frac{1+\sqrt{5}}{2}).$ Then there exists $\delta=\delta(\beta_1)>0$ such that for any $\beta_2,\beta_3\in (1,1+\delta)$ and $x\in\widetilde{\mathcal{O}}_{\beta_1}$ we have $$\pi_{\beta_2,\beta_3}(B(\omega^{1},(\omega^{-1}_i)_{i=1}^{\infty}))<0 \textrm{ and } 0<\pi_{\beta_2,\beta_3}(B(\omega^{-1},(\omega^{1}_i)_{i=1}^{\infty})).$$
\end{proposition}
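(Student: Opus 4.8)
The plan is to group the digit sequence $B(\omega^1,(\omega^{-1}_i)_{i=1}^\infty)$ into the consecutive blocks dictated by Proposition \ref{important prop}: an initial block $V_0$ coming from $\omega^1$, followed by the infinitely many blocks $V_1,V_2,\ldots$ produced by the repeated applications of $\omega^{-1}$. Each block has length at most $n(\beta_1)$, the initial block satisfies $|\omega^1|_1>|\omega^1|_{-1}$, and every later block obeys $|V_k|_{-1}>|V_k|_1$, hence $|V_k|_{-1}\ge|V_k|_1+1$. I would then estimate the contribution $C_k$ of each block to $\pi_{\beta_2,\beta_3}$ separately, noting that the weights $\beta_2^{-|(\epsilon_j)_{j=1}^i|_{-1}}\beta_3^{-|(\epsilon_j)_{j=1}^i|_1}$ decay geometrically so the whole series converges absolutely for fixed $\beta_2,\beta_3>1$.

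The crucial observation is that the weight attached to the $i$-th digit is nearly constant across a single block: both exponents increase by at most $n(\beta_1)$ within a block, so all weights inside block $V_k$ lie between $rW_k$ and $W_k$, where $W_k:=\beta_2^{-A_k}\beta_3^{-B_k}$ is the weight at the left endpoint of $V_k$ (with $A_k,B_k$ the number of $-1$'s and $1$'s preceding the block) and $r:=(\beta_2\beta_3)^{-n(\beta_1)}\to1$ as $\beta_2,\beta_3\to1$. Splitting $C_k$ into its positive and negative digits and using $w_i\le W_k$ for the $+1$'s and $w_i\ge rW_k$ for the $-1$'s gives
\begin{equation*}
C_k\le W_k\big(|V_k|_1-r|V_k|_{-1}\big)\le W_k\big(n(\beta_1)(1-r)-r\big).
\end{equation*}
Once $r>\tfrac{n(\beta_1)}{n(\beta_1)+1}$, which holds for all $\beta_2,\beta_3$ close enough to $1$, the bracketed factor is a strictly negative constant $-c_1$ (tending to $-1$ as $\beta_2,\beta_3\to1$), so $C_k\le-c_1W_k$ for every $k\ge1$.

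For the initial block I would simply use $w_i\le1$ to get $C_0\le|\omega^1|\le n(\beta_1)$, so the total is at most $n(\beta_1)-c_1\sum_{k\ge1}W_k$. Since at most $kn(\beta_1)$ digits precede block $k$, we have $W_k\ge\rho^k$ with $\rho:=\max(\beta_2,\beta_3)^{-n(\beta_1)}<1$, whence $\sum_{k\ge1}W_k\ge\rho/(1-\rho)$. As $\beta_2,\beta_3\to1$ this geometric sum diverges while $c_1$ stays near $1$, so I can choose $\delta=\delta(\beta_1)>0$ small enough that both $r>\tfrac{n(\beta_1)}{n(\beta_1)+1}$ and $c_1\rho/(1-\rho)>n(\beta_1)$ hold throughout $(1,1+\delta)$, forcing $\pi_{\beta_2,\beta_3}(B(\omega^1,(\omega^{-1}_i)_{i=1}^\infty))<0$. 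The second inequality follows from the identical argument with the roles of $1$ and $-1$ interchanged.

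The main obstacle is uniformity. The words $\omega^{\pm1}$ depend on the current point, so the estimates must rely only on the length bound $n(\beta_1)$ and the sign bias $|V_k|_{-1}\ge|V_k|_1+1$, never on a fixed word; and one must guarantee that the per-block negative coefficient $c_1$ stays bounded away from zero on the entire range $(1,1+\delta)$ at the same time as $\rho/(1-\rho)$ is made large. The tension is that both favourable behaviours are driven by pushing $\beta_2,\beta_3$ toward $1$, and the computation above resolves it precisely because $c_1\to1$ while $\rho/(1-\rho)\to\infty$, so a single small $\delta$ makes both hold simultaneously.
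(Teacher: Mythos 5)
Your proposal is correct and follows essentially the same strategy as the paper's proof: bound the initial block crudely by $n(\beta_1)$, show that each subsequent block contributes a definite signed amount proportional to its leading weight, and let the geometric series in those weights blow up as $\beta_2,\beta_3\to 1$ so that it overwhelms the initial block, with the two requirements on $\delta$ satisfied simultaneously. The only real difference is in how the per-block bound is obtained --- the paper fixes the constant $1/2$ via finiteness of the words of length at most $n(\beta_1)$ and continuity at $\beta_2=\beta_3=1$, whereas you derive an explicit constant $c_1$ from the weight-ratio bound $r=(\beta_2\beta_3)^{-n(\beta_1)}>\frac{n(\beta_1)}{n(\beta_1)+1}$ --- which is a more quantitative rendering of the same step, not a different route.
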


\begin{proof}
Let us start by fixing $\beta_1\in(1,\frac{1+\sqrt{5}}{2})$ and let $n(\beta_1)$ be as in the statement of Proposition \ref{important prop}. Let $\delta'>0$ be sufficiently small such that if $\beta_2,\beta_3\in(1,1+\delta'),$ then
\begin{equation}
\label{beathalf}
\sum_{i=1}^{|(\epsilon_i)|}\frac{\epsilon_i}{\beta_2^{|(\epsilon_1)_{j=1}^i|_{-1}}\beta_3^{|(\epsilon_1)_{j=1}^i|_{1}}}\geq \frac{1}{2}
\end{equation}whenever $(\epsilon_i)\in\{-1,1\}^*$ satisfies $|(\epsilon_i)|\leq n(\beta)$ and $|(\epsilon_i)|_{1}>|(\epsilon_i)|_{-1}.$ Such a $\delta'$ exists since for any $(\epsilon_i)$ satisfying these properties we have $$\sum_{i=1}^{|(\epsilon_i)|}\frac{\epsilon_i}{1^{|(\epsilon^1)_{j=1}^i|_{-1}}1^{|(\epsilon^1)_{j=1}^i|_{1}}}=\sum_{i=1}^{|(\epsilon_i)|}\epsilon_i=|(\epsilon_i)|_1-|(\epsilon_i)|_{-1}\geq 1>\frac{1}{2},$$ and strict inequality is preserved in a neighbourhood of $1$. For the same value of $\delta'$ we have
\begin{equation}
\label{losehalf}
\sum_{i=1}^{|(\epsilon_i)|}\frac{\epsilon_i}{\beta_2^{|(\epsilon_1)_{j=1}^i|_{-1}}\beta_3^{|(\epsilon_1)_{j=1}^i|_{1}}}\leq  -\frac{1}{2}
\end{equation}whenever $(\epsilon_i)\in\{-1,1\}^*$ satisfies $|(\epsilon_i)|\leq n(\beta)$ and $|(\epsilon_i)|_{-1}>|(\epsilon_i)|_{1}.$ Suppose $\beta_2,\beta_3\in(1,1+\delta'),$ then
\begin{align*}
\pi_{\beta_2,\beta_3}(B(\omega^{-1},(\omega_{i}^{1})_{i=1}^{\infty}))&= \pi_{\beta_2,\beta_3}(B(\omega^{-1})) + \sum_{i=0}^{\infty}\frac{\pi_{\beta_2,\beta_3}(B(\omega^1_{i+1}))}{\beta_2^{|\omega^{-1}|_{-1}+\sum_{j=0}^i|\omega^1_j|_{-1}}\beta_3^{|\omega^{-1}|_{1}+\sum_{j=0}^i|\omega^1_j|_{1}}}\\
& \geq -n(\beta_1) + \sum_{i=0}^{\infty}\frac{1}{2\beta_2^{|\omega^{-1}|_{-1}+\sum_{j=0}^i|\omega^1_j|_{-1}}\beta_3^{|\omega^{-1}|_{1}+\sum_{j=0}^i|\omega^1_j|_{1}}}\\
& \geq -n(\beta_1) + \sum_{i=0}^{\infty}\frac{1}{2\max(\beta_2,\beta_3)^{|\omega^{-1}|+\sum_{j=0}^i|\omega^1_j|}}\\
& \geq -n(\beta_1) + \sum_{i=0}^{\infty}\frac{1}{2\max(\beta_2,\beta_3)^{(i+1)n(\beta_1)}}\\
&\geq -n(\beta_1)+\frac{1}{2(\max(\beta_2,\beta_3)^{n(\beta_1)}-1)}.
\end{align*} In the first inequality we used \eqref{beathalf}. In the third inequality we used the fact that $|\omega^{-1}|\leq n(\beta_1),$ and $|\omega^{1}_i|\leq n(\beta_1)$ for all $i$. Summarising the above we have
\begin{equation}
\label{quack1}
-n(\beta_1)+\frac{1}{2(\max(\beta_2,\beta_3)^{n(\beta_1)}-1)} \leq \pi_{\beta_2,\beta_3}(B(\omega^{-1},(\omega_{i}^{1})_{i=1}^{\infty}))
\end{equation}
whenever $\beta_2,\beta_3\in(1,1+\delta')$. Similarly, one can show that if $\beta_2,\beta_3\in(1,1+\delta')$ then
\begin{equation}
\label{quack2}
\pi_{\beta_2,\beta_3}(B(\omega^{1},(\omega^{-1}_i)_{i=1}^{\infty}))\leq n(\beta_1)-\frac{1}{2(\max(\beta_2,\beta_3)^{n(\beta_1)}-1)}.
\end{equation}There exists $\delta''>0$ such that for $\beta_2,\beta_3\in(1,1+\delta'')$ we have
\begin{equation}
\label{quack3}
n(\beta_1)-\frac{1}{2(\max(\beta_2,\beta_3)^{n(\beta_1)}-1)}<0\textrm{ and } 0<n(\beta_1)+\frac{1}{2(\max(\beta_2,\beta_3)^{n(\beta_1)}-1)}.
\end{equation} Taking $\delta=\min(\delta',\delta''),$ we see that \eqref{quack1}, \eqref{quack2}, and \eqref{quack3} imply that for $\beta_2,\beta_3\in(1+\delta)$ we have $$\pi_{\beta_2,\beta_3}(B(\omega^{1},(\omega^{-1}_i)_{i=1}^{\infty}))<0 \textrm{ and } 0<\pi_{\beta_2,\beta_3}(B(\omega^{-1},(\omega^{1}_i)_{i=1}^{\infty})).$$ This completes our proof.
\end{proof}
In the proof of Proposition \ref{technical prop} the parameter $1/2$ appearing in \eqref{beathalf} and \eqref{losehalf} is an arbitrary choice. We could have replaced $1/2$ with any $c\in(0,1)$. It is not clear what an optimal choice of $c$ would be. What is more, the quantity $n(\beta_1)$ appearing in \eqref{quack1} and \eqref{quack2} is not necessarily optimal. In Section \ref{Section Explicit calculations} we see that for explicit an choice of $\beta_1$ these parameters can be improved upon to give a larger value of $\delta$.

The following corollary follows immediately from Proposition \ref{technical prop}.
\begin{corollary}
\label{technical corollary}
Let $\beta_1\in (1,\frac{1+\sqrt{5}}{2}).$ Then there exists $\delta=\delta(\b_1)>0$ such that for any $\beta_2,\beta_3\in (1,1+\delta)$ and $x\in\widetilde{\mathcal{O}}_{\beta_1}$ we have $$\pi_{\beta_2,\beta_3}(B(\lambda,\omega^{1},(\omega^{-1}_i)_{i=1}^{\infty}))<\pi_{\beta_2,\beta_3}(B(\lambda,\omega^{-1},(\omega^{1}_i)_{i=1}^{\infty}))$$ for all $\lambda\in\{T_0,T_1\}^{*}$.
\end{corollary}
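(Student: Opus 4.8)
The plan is to exploit the quasi-multiplicative behaviour of $\pi_{\beta_2,\beta_3}$ under prepending a common finite block $\lambda$, and then to reduce the claim to Proposition \ref{technical prop} applied at the image point $\lambda(x)$. First I would record the prefix identity: for any finite $\lambda\in\{T_{-1},T_1\}^{*}$ with $\lambda(x)\in\widetilde{\mathcal{O}}_{\beta_1}$ and any infinite continuation $\eta$,
$$\pi_{\beta_2,\beta_3}(B(\lambda,\eta))=\pi_{\beta_2,\beta_3}(B(\lambda))+\frac{1}{\beta_2^{|\lambda|_{-1}}\beta_3^{|\lambda|_{1}}}\,\pi_{\beta_2,\beta_3}(B(\eta)).$$
This is obtained by splitting the defining series for $\pi_{\beta_2,\beta_3}$ at position $|\lambda|$: the first $|\lambda|$ terms reproduce $\pi_{\beta_2,\beta_3}(B(\lambda))$ verbatim, while in every later term the cumulative exponents of $\beta_2$ and $\beta_3$ are shifted by the constant offsets $|\lambda|_{-1}$ and $|\lambda|_{1}$ respectively, and these offsets factor out of the tail sum as exactly the stated scaling constant.

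Next I would apply Proposition \ref{technical prop} at the point $y:=\lambda(x)$. Since $y\in\widetilde{\mathcal{O}}_{\beta_1}$, the adaptively generated tails $(\omega^{1},(\omega^{-1}_i)_{i=1}^{\infty})$ and $(\omega^{-1},(\omega^{1}_i)_{i=1}^{\infty})$ produced from $y$ by repeated invocation of Proposition \ref{important prop} satisfy
$$\pi_{\beta_2,\beta_3}(B(\omega^{1},(\omega^{-1}_i)_{i=1}^{\infty}))<0<\pi_{\beta_2,\beta_3}(B(\omega^{-1},(\omega^{1}_i)_{i=1}^{\infty}))$$
for all $\beta_2,\beta_3\in(1,1+\delta)$, where $\delta=\delta(\beta_1)$ is the constant furnished by Proposition \ref{technical prop}. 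Crucially, the $\delta$ in that proposition is uniform over all starting points in $\widetilde{\mathcal{O}}_{\beta_1}$, so no dependence on $\lambda$ is introduced and the same $\delta$ serves in the corollary.

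Finally I would combine the two ingredients. Feeding both tails through the prefix identity with the same $\lambda$ produces two expressions sharing the common summand $\pi_{\beta_2,\beta_3}(B(\lambda))$ and the same strictly positive factor $\beta_2^{-|\lambda|_{-1}}\beta_3^{-|\lambda|_{1}}$; since this factor multiplies a negative quantity on the left and a positive quantity on the right, it follows that
$$\pi_{\beta_2,\beta_3}(B(\lambda,\omega^{1},(\omega^{-1}_i)_{i=1}^{\infty}))<\pi_{\beta_2,\beta_3}(B(\lambda,\omega^{-1},(\omega^{1}_i)_{i=1}^{\infty})),$$
which is the assertion. All of the genuine work has been absorbed into Proposition \ref{technical prop}, so the corollary is a short deduction; the only steps requiring care are the prefix identity (a routine re-indexing of the defining series) and the strict positivity of the scaling constant, which is immediate. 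The one point that needs a word is the implicit admissibility hypothesis $\lambda(x)\in\widetilde{\mathcal{O}}_{\beta_1}$, which is precisely what allows the $\omega$-construction to be launched at $\lambda(x)$; I would therefore understand the statement for such $\lambda$ (and note in passing that the alphabet "$\{T_0,T_1\}$" in the printed corollary should read "$\{T_{-1},T_1\}$", the digit set in force throughout this section).
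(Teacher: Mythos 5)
Your proposal is correct and takes essentially the same route as the paper: the paper likewise deduces the corollary from Proposition \ref{technical prop} by noting that prepending $\lambda$ sends both quantities through an orientation-preserving affine map, which is precisely your prefix identity $\pi_{\beta_2,\beta_3}(B(\lambda,\eta))=\pi_{\beta_2,\beta_3}(B(\lambda))+\beta_2^{-|\lambda|_{-1}}\beta_3^{-|\lambda|_{1}}\pi_{\beta_2,\beta_3}(B(\eta))$, so the strict inequality is preserved. Your version just makes the affine map explicit where the paper leaves it as ``it can be shown,'' and your side remarks (the admissibility of $\lambda$ and the typo $\{T_0,T_1\}^{*}$ for $\{T_{-1},T_1\}^{*}$) are both apt.
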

\begin{proof}
Let $\b_1\in(1,\frac{1+\sqrt{5}}{2}).$ By Proposition \ref{technical prop} we know that for any $\beta_2,\beta_3\in (1,1+\delta)$ and $x\in\widetilde{\mathcal{O}}_{\beta_1}$we have $$\pi_{\beta_2,\beta_3}(B(\omega^{1},(\omega^{-1}_i)_{i=1}^{\infty}))<\pi_{\beta_2,\beta_3}(B(\omega^{-1},(\omega^{1}_i)_{i=1}^{\infty})).$$ It can be shown that the quantities $\pi_{\beta_2,\beta_3}(B(\lambda,\omega^{1},(\omega^{-1}_i)_{i=1}^{\infty}))$ and $\pi_{\beta_2,\beta_3}(B(\lambda,\omega^{-1},(\omega^{1}_i)_{i=1}^{\infty}))$ are the images of $\pi_{\beta_2,\beta_3}(B(\omega^{1},(\omega^{-1}_i)_{i=1}^{\infty}))$ and $\pi_{\beta_2,\beta_3}(B(\omega^{-1},(\omega^{1}_i)_{i=1}^{\infty}))$ under an orientation preserving affine map. Consequently the strict inequality is preserved.
\end{proof}We are now in a position to prove Theorem \ref{Affine theorem}.

\begin{proof}[Proof of Theorem \ref{Affine theorem}]
Let us fix $\beta_1\in(1,\frac{1+\sqrt{5}}{2})$ and let $\delta>0$ be as in the statement of Proposition \ref{technical prop}. Fix $\beta_2,\beta_3\in(1,1+\delta)$ and $x\in(\frac{-1}{\beta_1-1},\frac{1}{\beta_1-1}).$ By Lemma \ref{basic lemma} there exists $\lambda^0\in\{T_{-1},T_1\}^{*}$ such that $\lambda^0(x)\in\widetilde{\mathcal{O}}_{\beta_1}.$ Consider the interval $$[\pi_{\beta_2,\beta_3}(B(\lambda^0,(\omega^{-1}_i)_{i=1}^{\infty})),\pi_{\beta_2,\beta_3}(B(\lambda^0,(\omega^{1}_i)_{i=1}^{\infty}))].$$ Where $(\omega^{-1}_i)_{i=1}^{\infty}$ and $(\omega^{1}_i)_{i=1}^{\infty}$ are obtained by repeatedly applying Proposition \ref{important prop} to $\lambda^{0}(x)$ and its images. Recalling the proof of Proposition \ref{technical prop}, for this choice of $\delta$ we have $\pi_{\beta_2,\beta_3}(B(\omega^1_i))>1/2$ and $\pi_{\beta_2,\beta_3}(B(\omega^{-1}_i))<-1/2,$ as such the above interval is well defined and nontrivial. We will now show that this interval is contained within the fibre $\Lambda_{\beta_1,\beta_2,\beta_3}^x$. Let us fix $$y\in [\pi_{\beta_2,\beta_3}(B(\lambda^0,(\omega^{-1}_i)_{i=1}^{\infty})),\pi_{\beta_2,\beta_3}(B(\lambda^0,(\omega^{1}_i)_{i=1}^{\infty}))].$$ There are two cases to consider, either $$y\in [\pi_{\beta_2,\beta_3}(B(\lambda^0,(\omega^{-1}_i)_{i=1}^{\infty})),(B(\lambda^0,\omega^{-1}_1,(\omega^{1}_i)_{i=1}^{\infty}))]$$ or $$y\in[\pi_{\beta_2,\beta_3}(B(\lambda^0,\omega^{-1}_1,(\omega^{1}_i)_{i=1}^{\infty})),\pi_{\beta_2,\beta_3}(B(\lambda^0,(\omega^{1}_i)_{i=1}^{\infty}))].$$ The first interval is well defined and nontrivial by the same reasoning as that given above. The second interval is not necessarily well defined. However when it is not well defined, i.e., $\pi_{\beta_2,\beta_3}(B(\lambda^0,\omega^{-1}_1,(\omega^{1}_i)_{i=1}^{\infty}))>\pi_{\beta_2,\beta_3}(B(\lambda^0,(\omega^{1}_i)_{i=1}^{\infty}))$, then  $y$ is contained in the first interval. As such we can overlook this technicality.  In the first case we let $\lambda^{1}=(\lambda^0,\omega^{-1}),$ in the second case we let $\lambda^1=(\lambda^0,\omega^1).$ For the first case it is immediate that $$y\in[\pi_{\beta_2,\beta_3}(\lambda^1,(\omega^{-1}_i)_{i=1}^{\infty}),\pi_{\beta_2,\beta_3}(\lambda^1,(\omega^{1}_i)_{i=1}^{\infty})].$$ By Corollary \ref{technical corollary} we know that $$\pi_{\beta_2,\beta_3}(B(\lambda^0,\omega^{1},(\omega^{-1}_i)_{i=1}^{\infty}))<\pi_{\beta_2,\beta_3}(B(\lambda^0,\omega^{-1},(\omega^{1}_i)_{i=1}^{\infty})).$$ Therefore for the second case we also have $$y\in[\pi_{\beta_2,\beta_3}(\lambda^1,(\omega^{-1}_i)_{i=1}^{\infty}),\pi_{\beta_2,\beta_3}(\lambda^1,(\omega^{1}_i)_{i=1}^{\infty})].$$Now suppose we have constructed a sequence $\lambda^k\in\{T_{-1},T_1\}^{*}$ such that
\begin{equation}
\label{stepk}
y\in[\pi_{\beta_2,\beta_3}(B(\lambda^k,(\omega^{-1}_i)_{i=1}^{\infty})),\pi_{\beta_2,\beta_3}(B(\lambda^k,(\omega^{1}_i)_{i=1}^{\infty}))].
\end{equation}We now show how to construct $\lambda^{k+1}$ satisfying \eqref{stepk}. Again there are two cases to consider, either $$y\in [\pi_{\beta_2,\beta_3}(B(\lambda^k,(\omega^{-1}_i)_{i=1}^{\infty})),(B(\lambda^k,\omega^{-1}_1,(\omega^{1}_i)_{i=1}^{\infty}))]$$ or $$y\in[\pi_{\beta_2,\beta_3}(B(\lambda^k,\omega^{-1}_1,(\omega^{1}_i)_{i=1}^{\infty})),\pi_{\beta_2,\beta_3}(B(\lambda^k,(\omega^{1}_i)_{i=1}^{\infty}))].$$ The first interval is still well defined and nontrivial. The second interval is not necessarily well defined but this technicality can be overlooked for the same reason as that given before. In the first case we take $\lambda^{k+1}=(\lambda^k,\omega^{-1}),$ then we automatically have $$[\pi_{\beta_2,\beta_3}(B(\lambda^{k+1},(\omega^{-1}_i)_{i=1}^{\infty})),\pi_{\beta_2,\beta_3}(B(\lambda^{k+1},(\omega^{1}_i)_{i=1}^{\infty}))].$$ In the second case we take $\lambda^{k+1}=(\lambda^k,\omega^{1}).$ Applying Corollary \ref{technical corollary} as above we then have $$[\pi_{\beta_2,\beta_3}(B(\lambda^{k+1},(\omega^{-1}_i)_{i=1}^{\infty})),\pi_{\beta_2,\beta_3}(B(\lambda^{k+1},(\omega^{1}_i)_{i=1}^{\infty}))].$$ Thus we have completed our inductive step.

Continuing in this manner yields an infinite sequence $\lambda\in \Omega_{\beta_1}(x)$. Since the diameter of the interval appearing in \eqref{stepk} tends to zero as $k\to \infty$, it follows that $$y=\pi_{\beta_2,\beta_3}(B(\lambda)).$$ Since $y$ was arbitrary it follows that $$ [\pi_{\beta_2,\beta_3}(B(\lambda^0,(\omega^{-1}_i)_{i=1}^{\infty})),\pi_{\beta_2,\beta_3}(B(\lambda^0,(\omega^{1}_i)_{i=1}^{\infty}))]\subseteq \pi_{\b_2,\b_3}(B(\widetilde{\Omega}_{\b_1}(x))).$$ By \eqref{fibre identification} and Lemma \ref{Bijection lemma} it follows that $$ [\pi_{\beta_2,\beta_3}(B(\lambda^0,(\omega^{-1}_i)_{i=1}^{\infty})),\pi_{\beta_2,\beta_3}(B(\lambda^0,(\omega^{1}_i)_{i=1}^{\infty}))]\subseteq \Lambda^x_{\b_1,\b_2,\b_3}$$ as required.

To see that $(0,0)\in\Lambda_{\beta_1,\beta_2,\beta_3}^{0},$ we remark that if $x\in\widetilde{\mathcal{O}}_{\beta_1}$ then we do not require the initial map $\lambda^0$ which maps $x$ into $\widetilde{\mathcal{O}}_{\beta_1}.$ Consequently, for every $x\in \widetilde{\mathcal{O}}_{\beta_1}$ the fibre $\Lambda_{\beta_1,\beta_2,\beta_3}^x$ contains the interval $$[\pi_{\beta_2,\beta_3}(B((\omega^{-1}_i)_{i=1}^{\infty})),\pi_{\beta_2,\beta_3}(B((\omega^{1}_i)_{i=1}^{\infty}))].$$ By Proposition \ref{technical prop} this interval contains a neighbourhood of zero. Since $0$ is contained in the interior of $\widetilde{\mathcal{O}}_{\beta_1}$ it follows that $(0,0)\in\Lambda_{\beta_1,\beta_2,\beta_3}^{0}$.

\end{proof}

\section{An explicit calculation}
\label{Section Explicit calculations}
In this section we fix $\beta^*\approx 1.4656$ the appropriate root of $x^3-x^2-1=0$. A simple calculation yields $$\mathcal{O}_{\beta^*}=[0.872\ldots,1.276\ldots].$$ In Table \ref{tab:table-name} we include a list of intervals that partition $\mathcal{O}_{\beta^*}$ along with the corresponding sequences $\omega^0$ and $\omega^1$ which satisfy the conclusions of Proposition \ref{important prop} for those elements within each interval.

\begin{table}
 \begin{tabular}{|c c|| c c|}
 \hline
 Interval & $\omega^0$ & Interval & $\omega^1$ \\ [0.5ex]
 \hline\hline
 [0.872\ldots,0.959\ldots] & $(T_1,T_0,T_0,T_0)$ & [1.188\ldots,1.276\ldots] & $(T_0,T_1,T_1,T_1)$\\
 \hline
 [0.959\ldots,1.087\ldots] & $(T_1,T_0,T_0)$ & [1.061\ldots,1.188\ldots]  & $(T_0,T_1,T_1)$\\
 \hline
 [1.087\ldots,1.128\ldots] & $(T_1,T_0,T_1,T_0,T_0,T_0)$ & [1.020\ldots,1.061\ldots]  & $(T_0,T_1,T_0,T_1,T_1,T_1)$\\
 \hline
 [1.128\ldots,1.188\ldots] & $(T_1,T_0,T_1,T_0,T_0)$ & [0.960\ldots,1.020\ldots] & $(T_0,T_1,T_0,T_1,T_1)$ \\
\hline
 [1.188\ldots, 1.208\ldots] & $(T_1,T_1,(T_0)^6)$ & [0.940\ldots,0.960\ldots] & $(T_0,T_0,(T_1)^6)$\\
\hline
 [1.208\ldots,1.236\ldots] & $(T_1,T_1,(T_0)^5)$ & [0.912\ldots,0.940\ldots] & $(T_0,T_0,(T_1)^5)$\\
\hline
 [1.236\ldots,1.276\ldots] & $(T_1,T_1,(T_0)^4)$  & [0.872\ldots,0.912\ldots] & $(T_0,T_0,(T_1)^4)$\\ [1ex]
 \hline
\end{tabular}
\vspace{5mm}
\caption{A partition of the interval $\mathcal{O}_{\beta^{*}}$ and the corresponding $\omega^{0}$ and $\omega^1$.}
\label{tab:table-name}
\end{table}
Upon examination of Table \ref{tab:table-name} we observe that $|\omega^0|\leq 8$ and $|\omega^1|\leq 8$ for all $\omega^0$ and $\omega^1$. As such we can take $n(\beta^*)=8$. It follows from the proof of Proposition \ref{freq prop}, that for any $x\in(0,\frac{1}{\beta^{*}-1})$ and $p\in[7/16,9/16],$ there exists an expansion of $x$ in base $\beta^{*}$ such that the digit zero occurs with frequency $p$.

We now consider Theorem \ref{Affine theorem} and show how one can explicitly calculate the parameter $\delta$ appearing in its statement. Note that $$\widetilde{\mathcal{O}}_{\beta^*}=[-0.403\ldots,0.403\ldots].$$  We start by pointing out Table \ref{tab:table-name2}. This table lists a collection of intervals that partition $\widetilde{\mathcal{O}}_{\beta^*}$ along with the corresponding sequences $\omega^{-1}$ and $\omega^1$ which satisfy the conclusions of Proposition \ref{important prop} for those elements within each interval. We remark that Table \ref{tab:table-name2} can be obtained from Table \ref{tab:table-name} by a simple change of coordinates.
\begin{table}
 \begin{tabular}{|c c|| c c|}
 \hline
 Interval & $\omega^{-1}$ & Interval & $\omega^1$ \\ [0.5ex]
 \hline\hline
 [-0.403\ldots,-0.229\ldots] & $(T_1,T_{-1},T_{-1},T_{-1})$ & [0.229\ldots,0.403\ldots] & $(T_{-1},T_1,T_1,T_1)$\\
 \hline
 [-0.229\ldots,0.026\ldots] & $(T_1,T_{-1},T_{-1})$ & [-0.026\ldots,0.229\ldots] & $(T_{-1},T_1,T_1)$\\
 \hline
 [0.026\ldots,0.108\ldots] & $(T_1,T_{-1},T_1,T_{-1},T_{-1},T_{-1})$ & [-0.108\ldots, -0.026\ldots]  & $(T_{-1},T_1,T_{-1},T_1,T_1,T_1)$\\
 \hline
 [0.108\ldots,0.228\ldots] & $(T_1,T_{-1},T_1,T_{-1},T_{-1})$ & [-0.228\ldots, -0.108\ldots]& $(T_{-1},T_1,T_{-1},T_1,T_1)$ \\
\hline
 [0.228\ldots, 0.268\ldots] & $(T_1,T_1,(T_{-1})^6)$ &  [-0.268\ldots, -0.228\ldots] & $(T_{-1},T_{-1},(T_1)^6)$\\
\hline
 [0.268\ldots,0.324\ldots] & $(T_1,T_1,(T_{-1})^5)$ & [-0.324\ldots,- 0.268\ldots] & $(T_{-1},T_{-1},(T_1)^5)$\\
\hline
 [0.324\ldots,0.403\ldots] & $(T_1,T_1,(T_{-1})^4)$  & [-0.403\ldots,-0.324\ldots] & $(T_{-1},T_{-1},(T_1)^4)$\\ [1ex]
 \hline
\end{tabular}
\vspace{5mm}
\caption{A partition of the interval $\widetilde{\mathcal{O}}_{\beta^{*}}$ and the corresponding $\omega^{-1}$ and $\omega^1$.}
\label{tab:table-name2}
\end{table}

The crucial step in the proof of Theorem \ref{Affine theorem} is Proposition \ref{technical prop}. The $\delta$ appearing in this statement is the same $\delta$ appearing in the statement of Theorem \ref{Affine theorem}. As such to determine a $\delta$ so that the conclusions of Theorem \ref{Affine theorem} are satisfied, we need to calculate a $\delta$ such that if $\beta_2,\beta_3\in (1,1+\delta)$ and $x\in\widetilde{\mathcal{O}}_{\b^*}$ then
\begin{equation}
\label{need to show}
\pi_{\beta_2,\beta_3}(B(\omega^{1},(\omega^{-1}_i)_{i=1}^{\infty}))<0 \textrm{ and } 0<\pi_{\beta_2,\beta_3}(B(\omega^{-1},(\omega^{1}_i)_{i=1}^{\infty})).
\end{equation} Let $$A_{-1}=\{\omega^{-1}:\omega^{-1} \textrm{ appears in Table \ref{tab:table-name2}}\}\textrm{ and }A_1=\{\omega^1:\omega^1 \textrm{ appears in Table \ref{tab:table-name2}}\}.$$ We will explicitly construct a $\delta$ such that if $\beta_2,\beta_3\in (1,1+\delta)$ then
\begin{equation}
\label{will show}
\pi_{\beta_2,\beta_3}(B(a,(b_i)_{i=1}^{\infty}))<0 \textrm{ and } 0<\pi_{\beta_2,\beta_3}(B(c,(d_i)_{i=1}^{\infty})),
\end{equation} for any $a\in A_1$ and $(b_i)\in A_{-1}^{\mathbb{N}},$ and for any $c\in A_{-1}$ and $(d_i)\in A_{1}^{\mathbb{N}}.$ Clearly \eqref{will show} implies \eqref{need to show}.

The following lemma makes determining a $\delta$ for which \eqref{will show} holds far more tractable.

\begin{lemma}
\label{geometric progression}
Let $\mathcal{D}=\{\kappa_l\}\subseteq\{-1,1\}^{*}$ be a finite set consisting of strings of digits (possibly of different lengths). Then $$\min_l \pi_{\beta_2,\beta_3}((\kappa_l)^{\infty})\leq \pi_{\beta_2,\beta_3}((a_i)_{i=1}^{\infty})\leq \max_l \pi_{\beta_2,\beta_3}((\kappa_l)^{\infty})$$ for any $(a_i)\in\mathcal{D}^{\mathbb{N}}.$
\end{lemma}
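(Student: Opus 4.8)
The plan is to exploit the affine structure that $\pi_{\beta_2,\beta_3}$ acquires under concatenation of blocks. First I would extend $\pi_{\beta_2,\beta_3}$ to a finite word $\kappa=(\epsilon_1,\ldots,\epsilon_n)\in\{-1,1\}^{*}$ by the same defining formula truncated at length $n$, and attach to $\kappa$ its \emph{contraction ratio} $w_{\kappa}:=\beta_2^{-|\kappa|_{-1}}\beta_3^{-|\kappa|_{1}}$. Since $\beta_2,\beta_3>1$ and $|\kappa|\geq 1$ we have $w_{\kappa}\in(0,1)$. The key observation is that prepending $\kappa$ to a sequence multiplies the weight of every subsequent digit by exactly $w_{\kappa}$, so that prepending $\kappa$ acts on $\pi_{\beta_2,\beta_3}$-values by the increasing affine contraction
\begin{equation*}
g_{\kappa}(y):=\pi_{\beta_2,\beta_3}(\kappa)+w_{\kappa}\,y.
\end{equation*}
Consequently, for a concatenation $(a_i)_{i=1}^{\infty}=\kappa_{l_1}\kappa_{l_2}\kappa_{l_3}\cdots$ with each $\kappa_{l_k}\in\mathcal{D}$, grouping the defining series blockwise gives $\pi_{\beta_2,\beta_3}((a_i)_{i=1}^{\infty})=\lim_{k\to\infty}(g_{\kappa_{l_1}}\circ\cdots\circ g_{\kappa_{l_k}})(0)$, and the absolute convergence of the series (each digit's weight is at most $\min(\beta_2,\beta_3)^{-i}$) justifies this identity.

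Next I would identify the endpoints. The periodic sequence $(\kappa_l)^{\infty}$ is the infinite concatenation of $\kappa_l$ with itself, so its value $y_l:=\pi_{\beta_2,\beta_3}((\kappa_l)^{\infty})$ is precisely the unique fixed point of $g_{\kappa_l}$, namely $y_l=\pi_{\beta_2,\beta_3}(\kappa_l)/(1-w_{\kappa_l})$. Set $m:=\min_l y_l$ and $M:=\max_l y_l$, so that the claim is exactly $\pi_{\beta_2,\beta_3}((a_i)_{i=1}^{\infty})\in[m,M]$. The crucial step is to verify that every $g_{\kappa_l}$ maps $[m,M]$ into itself: writing $g_{\kappa_l}(y)=y_l+w_{\kappa_l}(y-y_l)$, for $y\in[m,M]$ the quantity $y-y_l$ lies in $[m-y_l,\,M-y_l]$, an interval containing $0$, and multiplying by $w_{\kappa_l}\in(0,1)$ only shrinks it towards $0$; hence $g_{\kappa_l}(y)-y_l\in[m-y_l,M-y_l]$, i.e. $g_{\kappa_l}(y)\in[m,M]$.

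Finally I would assemble the two pieces. Because the ratios $w_{\kappa_l}$ are uniformly bounded above by some $\rho<1$ (a maximum over the finite set $\mathcal{D}$), the composition $g_{\kappa_{l_1}}\circ\cdots\circ g_{\kappa_{l_k}}$ has Lipschitz constant at most $\rho^{k}\to 0$, so its limit is independent of the point to which it is applied; in particular I may evaluate the limit above starting from any $z\in[m,M]$ rather than from $0$. Since each $g_{\kappa_l}$ preserves $[m,M]$, every finite composition sends $z$ into the closed interval $[m,M]$, and therefore so does the limit. This yields $m\leq\pi_{\beta_2,\beta_3}((a_i)_{i=1}^{\infty})\leq M$, as required. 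The only genuinely delicate point is the bookkeeping in the first paragraph, namely checking that the blockwise regrouping of the weighted series really does realise concatenation as composition of the maps $g_{\kappa}$; once that identity is in hand, the fixed-point and invariance arguments are routine.
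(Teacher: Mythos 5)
Your proof is correct and takes essentially the same approach as the paper's: both realise prepending a block $\kappa_l$ as an orientation-preserving affine contraction whose fixed point is $\pi_{\beta_2,\beta_3}((\kappa_l)^{\infty})$, show that the interval spanned by these fixed points is invariant under all such contractions, and conclude using closedness of that interval. The only cosmetic difference is that the paper passes to the limit via finite prefixes of $(a_i)$ concatenated with a fixed tail $(b_i)\in\mathcal{D}^{\mathbb{N}}$, whereas you iterate the maps $g_{\kappa}$ from an arbitrary starting point and invoke the vanishing Lipschitz constants; your explicit convex-combination bookkeeping is, if anything, slightly more careful than the paper's appeal to the contraction inequality.
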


\begin{proof}
Let $$J=\Big[\min_l \pi_{\beta_2,\beta_3}((\kappa_l)^{\infty}),\max_l \pi_{\beta_2,\beta_3}((\kappa_l)^{\infty})\Big].$$  Fix a sequence $(b_i)\in \mathcal{D}^{\mathbb{N}}$ such that $\pi_{\beta_2,\beta_3}((b_i))\in J$ (one could simply take the sequence $(b_i)=(\kappa_l)^{\infty}$ for any $l$), and let $(a_i)\in \mathcal{D}^{\mathbb{N}}$ be arbitrary. Consider the point $ \pi_{\beta_2,\beta_3}((a_1,(b_i))).$ Then $\pi_{\beta_2,\beta_3}((a_1,(b_i)))\in J$. This is because both $\pi_{\beta_2,\beta_3}((b_i))$ and $\pi_{\beta_2,\beta_3}((a_1)^{\infty})$ are contained in $J$ and
\begin{equation}
\label{contraction}
|\pi_{\beta_2,\beta_3}(a_1,(b_i))- \pi_{\beta_2,\beta_3}((a_1)^{\infty})|\leq |\pi_{\beta_2,\beta_3}((b_i))- \pi_{\beta_2,\beta_3}((a_1)^{\infty})|.
\end{equation}  Equation \eqref{contraction} holds because prefixing $(b_i)$ by $a_1$ corresponds to applying a uniformly contracting similarity to $\pi_{\beta_2,\beta_3}((b_i)),$ where this similarity has its unique fixed point at $\pi_{\beta_2,\beta_3}((a_1)^{\infty})$.

Repeating the above argument it follows that for any $n\in \mathbb{N}$ we have $\pi_{\beta_2,\beta_3}((a_i)_{i=1}^n,(b_i))\in J.$ Since $$\pi_{\beta_2,\beta_3}((a_i)_{i=1}^n,(b_i))\to \pi_{\beta_2,\beta_3}((a_i)_{i=1}^{\infty})$$ and $J$ is closed, we have $\pi_{\beta_2,\beta_3}((a_i)_{i=1}^{\infty})\in J$ as required.
\end{proof}

It is a consequence of Lemma \ref{geometric progression} that to calculate a $\delta$ such that \eqref{will show} holds, it suffices to determine a $\delta$ such that for all $\beta_2,\beta_3\in(1,1+\delta)$ we have
\begin{equation}
\label{will show2}
\max_{a\in A_1, b\in A_{-1}} \pi_{\beta_2,\beta_3}(B(a,(b)^{\infty}))<0 \textrm{ and } 0<\min_{c\in A_{-1}, d\in A_{1}} \pi_{\beta_2,\beta_3}(B(c,(d)^{\infty})).
\end{equation}
Since there are only finitely many elements in $A_{-1}$ and $A_1,$ to determine a $\delta$ for which \eqref{will show2} holds one only has to consider finitely many inequalities. Inputting each of these inequalities into a computer yields $\delta=0.041.$ Consequently if $\beta_2,\beta_3\in(1,1.041)$ then \eqref{will show2} holds and by Proposition \ref{technical prop} and Theorem \ref{Affine theorem} the fibre $\Lambda_{\beta^*,\beta_2,\beta_3}^x$ contains an interval for all $x\in (\frac{-1}{\b^*-1},\frac{1}{\beta-1}).$ In Figure \ref{figc} we include a plot of $\Lambda_{\b^*,1.03,1.04}.$

\begin{figure}[h]
\includegraphics[width=10cm, height=7cm]{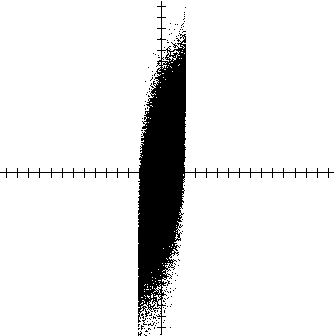}
\centering
\caption{A plot of  $\Lambda_{\b^*,1.03,1.04}.$ For each $x\in(\frac{-1}{\b^*-1},\frac{1}{\beta^*-1})$ the fibre $\Lambda_{\b^*,1.03,1.04}^x$ contains an interval.}
\label{figc}
\end{figure}

\section{Remarks}
\label{remarks}
We finish this paper by making some remarks and posing questions.
\begin{remark}
Note that when $\beta=\frac{1+\sqrt{5}}{2}$ it can be shown that $$\Sigma_{\frac{1+\sqrt{5}}{2}}(1)=\Big\{(10)^{\infty}, ((10)^k0(1)^{\infty}), (10)^k11(0)^{\infty}: k\geq 0\Big\}.$$ Moreover, for $\beta\in (\frac{1+\sqrt{5}}{2},2)$ it can be shown that $$\Sigma_{\beta}\Big(\frac{\beta}{\beta^2-1}\Big)=\{(10)^{\infty}\}.$$ Consequently, we see that statements $2$ and $3$ from Theorem \ref{frequency theorem} cannot be extended past the parameter $\frac{1+\sqrt{5}}{2}.$ Thus these statements are optimal.

Similarly, for the digit set $\{-1,1\}$ one can construct nontrivial $x$ such that $\widetilde{\Omega}_{\frac{1+\sqrt{5}}{2}}(x)$ is infinite countable, and for $\b_1\in(\frac{1+\sqrt{5}}{2},2)$ examples of nontrivial $x$ for which $\widetilde{\Omega}_{\frac{1+\sqrt{5}}{2}}(x)$ is a singleton set. For these particular choices of $x$ it is clear that the vertical fibre $\Lambda^x_{\beta_1,\beta_2,\beta_3}$cannot contain an interval. Consequently one cannot improve upon the interval $(1,\frac{1+\sqrt{5}}{2})$ appearing in the statement of Theorem \ref{Affine theorem}.

\end{remark}

\begin{remark}
Statement $1$ from Theorem \ref{frequency theorem} tells us that for $\beta\in(1,\beta_{KL})$ every $x\in(0,\frac{1}{\beta-1})$ has a simply normal expansion. It is natural to ask whether one can improve upon $\beta_{KL}$. In \cite{JSS} Jordan, Shmerkin, and Solomyak proved the following  result.
\begin{proposition}
\label{JSS lemma}
Let $\beta_{T}\approx 1.80193$ be the unique solution to the equation $1=\frac{1}{\beta}+\sum_{i=1}^{\infty}\frac{1}{\beta^{2i}}.$ Then for any $\beta>\beta_{T}$ there exists $x\in \mathcal{U}_{\beta}$ such that its unique $\beta$-expansion is not simply normal. Moreover for any $\beta\in(1,\beta_{T}]$ we have that every $(\epsilon_i)\in\widetilde{\mathcal{U}}_{\beta}\setminus\{(0)^{\infty},(1)^{\infty}\}$ is simply normal.
\end{proposition}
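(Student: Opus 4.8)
The plan is to treat the two assertions separately, reducing the ``moreover'' part to the single parameter $\beta_T$ and handling the strict inequality $\beta>\beta_T$ by an explicit construction. Throughout I will use that, writing $\alpha(\beta)=(\alpha_i(\beta))$ for the quasi-greedy expansion of $1$, the defining equation of $\beta_T$ says exactly that $\alpha(\beta_T)=11(01)^{\infty}$ and hence $\overline{\alpha(\beta_T)}=00(10)^{\infty}$; admissibility of $11(01)^\infty$ in the sense of Lemma~\ref{quasi greedy properties} is immediate. First I record the reduction for the second assertion. A nontrivial element of $\mathcal{U}_\beta$ is eventually mapped into the core $(\frac{2-\beta}{\beta-1},1)$ and stays there, so after deleting an initial block its coding lies in $\widetilde{\mathcal{A}}_\beta$; since the frequency of zeros is unchanged by deleting a finite prefix, it suffices to show that every element of $\widetilde{\mathcal{A}}_\beta$ is simply normal. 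By Lemma~\ref{inclusion lemma} we have $\widetilde{\mathcal{A}}_\beta\subseteq\widetilde{\mathcal{A}}_{\beta_T}$ for all $\beta\le\beta_T$, so the entire second assertion follows once it is proved at $\beta=\beta_T$.

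The heart of the argument is then a combinatorial frequency-forcing statement for $\widetilde{\mathcal{A}}_{\beta_T}$. By Lemma~\ref{lexicographic lemma} a sequence $(\epsilon_i)\in\widetilde{\mathcal{A}}_{\beta_T}$ has every tail strictly between $00(10)^\infty$ and $11(01)^\infty$. Comparing a tail lexicographically with $11(01)^\infty$ shows that the factor $11(01)^{j}1$ can never occur (in particular $(\epsilon_i)$ contains no $111$), and comparing with $00(10)^\infty$ shows that $00(10)^{j}0$ never occurs (no $000$), so all maximal runs have length at most $2$. I would then show that the runs $11$ and $00$ must strictly alternate along the sequence: once a factor $11$ appears, the constraint tail $\prec 11(01)^\infty$ forces the continuation to lie below $(01)^\infty$, which compels the pattern $0(10)^{j}$ to persist (each $1$ being immediately followed by a forced $0$) until the next occurrence of $00$, and the argument for $00$ is symmetric. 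Consequently, setting $D_n=\#\{i\le n:\epsilon_i=1\}-\#\{i\le n:\epsilon_i=0\}$, each $11$ raises and each $00$ lowers the phase by one while the intervening alternating stretches keep $D_n$ oscillating within a band of fixed width; since the defects alternate, $D_n$ remains bounded, so $D_n/n\to 0$ and $(\epsilon_i)$ is simply normal.

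For the first assertion I would exhibit, for each $\beta>\beta_T$, a single non-normal univoque sequence. Consider $u_k=(11(01)^{k}0)^{\infty}$, whose period contains $k+2$ ones and $k+1$ zeros, so its frequency of ones is $(k+2)/(2k+3)>\tfrac12$ and it is not simply normal. All of its zeros are isolated, so every tail beginning with $0$ exceeds $00(10)^\infty\succ\overline{\alpha(\beta)}$ and the lower univoque inequalities hold automatically; the only binding condition is that the largest tail, which is $u_k$ itself, satisfy $u_k\prec\alpha(\beta)$. A direct comparison shows that $u_k$ agrees with $\alpha(\beta_T)=11(01)^\infty$ on a prefix whose length tends to infinity and exceeds it immediately afterwards, i.e. $u_k\searrow\alpha(\beta_T)$. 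Since $\beta\mapsto\alpha(\beta)$ is strictly increasing by Lemma~\ref{quasi greedy properties}, we have $\alpha(\beta)\succ\alpha(\beta_T)$; these two sequences first differ at some finite position $p$, and choosing $k$ so large that $u_k$ agrees with $\alpha(\beta_T)$ beyond $p$ yields $u_k\prec\alpha(\beta)$. Hence for every $\beta>\beta_T$ some $u_k$ lies in $\widetilde{\mathcal{U}}_\beta\setminus\{(0)^\infty,(1)^\infty\}$ and fails to be simply normal.

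The step I expect to be the main obstacle is the frequency-forcing lemma at $\beta_T$, specifically the rigorous verification that the $11$ and $00$ defects strictly alternate and that this keeps $D_n$ uniformly bounded; the forbidden-factor bookkeeping and the passage from the two-sided attractor description of Lemma~\ref{lexicographic lemma} to the statement about arbitrary nontrivial elements of $\widetilde{\mathcal{U}}_\beta$ (handling the initial non-core block and the excluded endpoints $(0)^\infty,(1)^\infty$) will also require care. By contrast, the construction of $u_k$ and the limiting comparison $u_k\searrow\alpha(\beta_T)$ are routine once the correct family is identified, and they pin the threshold at exactly $\beta_T$.
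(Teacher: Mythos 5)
Your proposal is correct, but be aware that the paper contains no proof of this proposition at all: it is imported verbatim from Jordan--Shmerkin--Solomyak \cite{JSS}, so your argument is necessarily a different route --- a self-contained one built from the paper's own toolkit. Your identification $\alpha(\beta_T)=11(01)^{\infty}$ is right (the defining equation is exactly $\pi_{\beta_T}(11(01)^{\infty})=1$, and admissibility in the sense of Lemma \ref{quasi greedy properties} holds since every tail following a zero is $(10)^{\infty}\prec 11(01)^{\infty}$). The frequency-forcing step you flagged as the main obstacle does close rigorously: the strict inequality $11w\prec 11(01)^{\infty}$ forces $w=(01)^{j}00\cdots$ for some $j\geq 0$ (the first deviation from $(01)^{\infty}$ must be downward, hence at a position where $(01)^{\infty}$ has a $1$), so after each $11$ the continuation is an alternating block of the exact form $(01)^{j}$ --- which has \emph{zero} net contribution to the discrepancy --- terminated by $00$, and symmetrically after each $00$; thus defects alternate, each shifts the discrepancy by $\pm 2$ (not $\pm 1$ as you wrote, a harmless slip), and $D_n$ stays uniformly bounded. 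Two trivial cases deserve a sentence: sequences with no defect at all (necessarily $(01)^{\infty}$ or $(10)^{\infty}$, which are simply normal), and the fact that after any defect a further defect must occur, which is exactly what strictness of $\prec$ gives. Your witnesses $u_k=(11(01)^{k}0)^{\infty}$ for $\beta>\beta_T$ also check out: zeros are isolated so the lower lexicographic bound in Lemma \ref{lexicographic lemma} is automatic, the maximal tail is $u_k$ itself since $11$ occurs only at period boundaries, $u_k$ agrees with $\alpha(\beta_T)$ on $2k+4$ digits and then exceeds it, and since $\alpha(\beta)\succ\alpha(\beta_T)$ first differs at a finite position $p$, any $k$ with $2k+4\geq p$ gives $u_k\prec\alpha(\beta)$, hence $u_k\in\widetilde{\mathcal{A}}_{\beta}\subseteq\widetilde{\mathcal{U}}_{\beta}$ with $1$-frequency $(k+2)/(2k+3)\neq 1/2$. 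Compared with the paper's citation of \cite{JSS}, where the statement arises inside the multifractal analysis of Bernoulli convolutions, your proof buys elementarity and self-containedness (only Lemmas \ref{quasi greedy properties}, \ref{lexicographic lemma}, \ref{inclusion lemma} and the attractor observation are used) and in fact proves something stronger for $\beta\in(1,\beta_T]$: every nontrivial univoque sequence has \emph{uniformly bounded} discrepancy between its counts of $0$'s and $1$'s, not merely frequency $1/2$.
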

This leaves a closed interval of size $\beta_T-\beta_{KL}\approx 0.01473$ for which we don't know whether every $x\in(0,\frac{1}{\beta-1})$ has a simply normal $\beta$-expansion. It would be interesting to determine the behaviour within this interval. The author conjectures that for $\beta\in[\beta_{KL},\beta_{T}]$ every $x\in(0,\frac{1}{\beta-1})$ has a simply normal $\beta$-expansion.
\end{remark}

\begin{remark}
In this paper we have only considered simply normal expansions. It is natural to wonder about normal expansions. Recall that a sequence $(\epsilon_i)\in\{0,1\}^{\mathbb{N}}$ is normal if for every finite block $(\delta_1,\ldots,\delta_k)\in\{0,1\}^{*}$ we have $$\lim_{n\to\infty}\frac{\#\{1\leq i \leq n: \epsilon_i=\delta_1,\ldots, \epsilon_{i+k-1}=\delta_k\}}{n}=\frac{1}{2^{k}}.$$ A natural question to ask is whether there exists $c>0$ such that if $\beta\in(1,1+c)$ then every $x\in\Big(0,\frac{1}{\beta-1}\Big)$  has a normal expansion. This question was originally posed to the author by Kempton \cite{Kem}. The author suspects that such a $c$ does not exist but we cannot prove this. A natural obstruction to proving the nonexistence of such a $c$ is that there exists $c'>0$ such that for any $\beta\in(1,1+c'),$ every $x\in(0,\frac{1}{\beta-1})$ has a $\beta$-expansion that contains all finite blocks of digits. The existence of such a $c'$ was originally proved by Erd\H{o}s and Komornik \cite{ErdKom}. Consequently, to prove the nonexistence of such a $c$ one would have to prove that for every $\beta$ sufficiently close to one, there exists an $x\in(0,\frac{1}{\beta-1})$ such that for every $(\epsilon_i)\in\Sigma_{\beta}(x)$ there exists a block of digits which do not occur with the desired frequency. This seems like a difficult problem.
\end{remark}

\begin{remark}
In Theorem \ref{exceptional theorem} we proved that $$\lim_{\beta\nearrow 2}\dim_{H}\Big(\Big\{x: x \textrm{ has no simply normal } \beta\textrm{-expansion}\Big\}\Big)=1.$$ It is natural to ask whether $$\dim_{H}\Big(\Big\{x: x \textrm{ has no simply normal } \beta\textrm{-expansion}\Big\}\Big)<1$$ for all $\beta\in(1,2)$. A solution to this question would likely involve the study of those $x$ for which $\Sigma_{\beta}(x)$ is uncountable yet every element of $\Sigma_{\beta}(x)$ fails to be simply normal. Studying this set seems like a difficult task. Indeed for $\beta$ close to $2$ it is unclear whether this set is nonempty.

\end{remark}
\begin{remark}
In the proof of Theorem \ref{Affine theorem} we explicitly constructed an interval appearing in the fibre $\Lambda_{\b_1,\b_2,\b_3}^x$. For each $y$ in this interval we construct a $\lambda\in\widetilde{\Omega}_{\b_1}(x)$ such that $\pi_{\b_2,\b_3}(B(\lambda))=y$. The method by which we construct this $\lambda$ bears a strong resemblance to the way one normally constructs $\beta$-expansions. The author wonders whether a refinement of the argument given in the proof of Proposition \ref{important prop} would yield an algorithm by which we can construct many $\lambda\in\widetilde{\Omega}_{\b_1}(x)$ such that $\pi_{\b_2,\b_3}(B(\lambda))=y,$ and for which we have a lot of control over the frequency of the $T_{-1}$'s and $T_1$'s that appear in $\lambda$. With such an algorithm the author expects one could adapt the proof of Theorem \ref{Affine theorem} to give new examples of self-affine sets in three dimensions with nonempty interior. Possibly this method could be extended to $n$-dimensional self-affine sets.
\end{remark}

\noindent \textbf{Acknowledgements.} The author would like to thank Tom Kempton and Ben Pooley for some useful discussions. The author would also like to thank Nikita Sidorov for suggesting the technique of studying the fibres of self-affine sets by considering them as projections of the set of $\beta$-expansions. This research was supported by the EPSRC grant EP/M001903/1.

\end{document}